\title{Radiation fields on Schwarzschild spacetime}
\author{Dean Baskin}
\address{Northwestern University}
\email{dbaskin@math.northwestern.edu}
\author{Fang Wang}
\address{Shanghai Jiao Tong University}
\email{fangwang1984@sjtu.edu.cn}
\date{January 9, 2014}
\thanks{The authors are grateful to Ant{\^o}nio S{\'a} Barreto for
  pointing out the simple proof of the $1+1$-dimensional support
  theorem.  D.B.~was supported by NSF postdoctoral fellowship
  DMS-1103436.} 
\newtheorem{theorem}{Theorem}
\newtheorem{lemma}[theorem]{Lemma}
\newtheorem{proposition}[theorem]{Proposition}
\newtheorem{corollary}[theorem]{Corollary}
\newtheorem{definition}[theorem]{Definition}
\newtheorem{remark}[theorem]{Remark}
\numberwithin{theorem}{section}
\numberwithin{equation}{section}
\newcommand{\norm}[2][]{\left\| #2 \right\| _{#1}}
\newcommand{\pd}[1][]{\partial_{#1}}
\newcommand{\energyspace}{H_{E}}
\newcommand{\grad}{\nabla}
\newcommand{\horiz}{E_{1}^{+}}
\newcommand{\horizm}{E_{1}^{-}}
\newcommand{\lap}{\Delta}
\newcommand{\naturals}{\mathbb{N}}
\newcommand{\reals}{\mathbb{R}}
\newcommand{\scri}{S_{1}^{+}}
\newcommand{\scrim}{S_{1}^{-}}
\newcommand{\sphere}{\mathbb{S}}
\DeclareMathOperator{\supp}{supp}
\newcommand{\differential}[1]{\,d#1}
\newcommand{\da}{\differential{a}}
\newcommand{\dr}{\differential{r}}
\newcommand{\ds}{\differential{s}}
\newcommand{\dt}{\differential{t}}
\newcommand{\dT}{\differential{T}}
\newcommand{\dV}{\differential{V}}
\newcommand{\dmu}{\differential{\mu}}
\newcommand{\dnu}{\differential{\nu}}
\newcommand{\drho}{\differential{\rho}}
\newcommand{\dtau}{\differential{\tau}}
\newcommand{\domega}{\differential{\omega}}
\begin{document}

\begin{abstract}
  In this paper we define the radiation field for the wave equation on
  the Schwarzschild black hole spacetime.  In this context it has two
  components: the rescaled restriction of the time derivative of a
  solution to null infinity and to the event horizon.  In the process,
  we establish some regularity properties of solutions of the wave
  equation on the spacetime.  In particular, we prove that the
  regularity of the solution across the event horizon and across null
  infinity is determined by the regularity and decay rate of the
  initial data at the event horizon and at infinity.  We also show
  that the radiation field is unitary with respect to the conserved
  energy and prove support theorems for each piece of the radiation
  field.
\end{abstract}

\maketitle

\section{Introduction}
\label{sec:introduction}

In this paper we define the radiation field for the wave equation on
the Schwarzschild black hole spacetime.  The radiation field is a
rescaled restriction of the time derivative of a solution and in this
case has two components: one corresponding to null infinity and one
corresponding to the event horizon.  In the process,
we establish some regularity properties of solutions of the wave equation
on the spacetime.  In particular, we prove that the regularity of the
solution across the event horizon and across null infinity is
determined by the regularity and decay rate of the initial data at the event
horizon and at infinity.  We further show that the radiation field is
unitary with respect to the conserved energy and prove support
theorems for each component of the radiation field.

The radiation field for a solution of the wave equation describes the
radiation pattern seen by distant observers.  On Minkowski space
$\reals\times\reals^{n}$, it is the rescaled restriction of a solution
to null infinity.  More precisely, one introduces polar coordinates
$(r,\omega)$ in the spatial variables as well as the ``lapse''
parameter $s = t - r$.  The forward radiation field of a solution
$u$ of $(\pd[t]^{2}-\lap )u = 0$ with smooth, compactly supported
initial data is given by
\begin{equation*}
  \lim _{r\to \infty}\pd[s] r^{\frac{n-1}{2}}u (s+r, r\omega).
\end{equation*}
The map taking the initial data to the radiation field of the
corresponding solution provides a unitary isomorphism from the space
of finite energy initial data to
$L^{2}(\reals_{s}\times\sphere^{n-1}_{\omega})$.  The radiation field
is a translation representation of the wave group and was initially
defined by Friedlander~\cite{Friedlander:1980}, though it is implicit
in the work of Lax--Phillips (e.g.,~\cite{Lax:1989}) and Helgason
(e.g.,~\cite{Helgason:1999}).  Its definition, structure, and
properties have been studied in a variety of geometric
contexts~\cite{Sa-Barreto:2003,Sa-Barreto:2005,Sa-Barreto:2005a,Sa-Barreto:2008,BaskinBarreto2012},
including settings of interest in general
relativity~\cite{Wang:2011,BVW1}.

We now recall the structure of the Schwarzschild black hole spacetime.  (For a
more thorough discussion, including many different coordinate systems,
we direct the reader to the the book of Hawking and
Ellis~\cite{Hawking-Ellis} or to the lecture notes of Dafermos and
Rodnianski~\cite{Dafermos-Rodnianski:notes}.)  The Schwarzschild
spacetime is diffeomorphic to $\reals_{t}\times (2M, \infty)_{r}\times
\sphere^{2}_{\omega}$ with Lorentzian metric given by
\begin{equation*}
  g_{S} = -\left( \frac{r-2M}{r}\right) \dt^{2} + \left(
  \frac{r}{r-2M}\right) \dr^{2} + r^{2} \domega^{2}.
\end{equation*}
Here $\domega^{2}$ is the round metric on the unit sphere $\sphere^{2}$.

We consider the Cauchy problem:
\begin{align}
  \label{eq:IVP}
  &\Box_{S}u = 0, \quad(u,\pd[t]u)|_{t=0} = (\phi, \psi) 
\end{align}
where $\Box_{S}$ is the Laplace--Beltrami (D'Alembertian) operator for $g_{S}$:
\begin{equation*}
  \Box_{S} = - \left( \frac{r}{r-2M}\right)\pd[t]^{2} + \left(
    \frac{r-2M}{r}\right) \pd[r]^{2} + \frac{1}{r^{2}}\lap_{\omega} +
  \frac{2(r-M)}{r^{2}}\pd[r] .
\end{equation*}
Solutions $u$ of equation~\eqref{eq:IVP} possesses a
conserved energy $E(t)$:
\begin{equation*}
  E(t) = \int_{2M}^{\infty}\int_{\sphere^{2}}e(t)r^{2}\domega \dr 
\end{equation*}
where
\begin{equation*}
  e(t)= \left( 1 -
      \frac{2M}{r}\right)^{-1}(\pd[t]u)^{2} + \left( 1 -
      \frac{2M}{r}\right) (\pd[r]u)^{2} +
    \frac{1}{r^{2}}\left|\grad_{\omega}u\right|^{2}.
\end{equation*}
Observe that $e(t)$ is positive definite but is ill-behaved at $r=2M$.

As the Schwarzschild black hole has two spatial ends, there
are two ends through which null geodesics (i.e., light rays) can
``escape'': the event horizon (at the $r=2M$ end) and null infinity
(at the $r=\infty$ end).  (There are also ``trapped'' null geodesics
tangent to the photon sphere $r=3M$.)  In terms of incoming
Eddington--Finkelstein coordinates,
\[
(\tau=t + r + 2M\log (r-2M), r, \omega),
\]
the event horizon corresponds to $r=2M$.  Similarly, in terms of
outgoing Eddington--Finkelstein coordinates,
\[
(\bar{\tau}=t-r-2M\log (r-2M), r, \omega),
\]
null infinity corresponds to $r=\infty$.
Figure~\ref{fig:penrose-diag} depicts the Penrose diagram of the
Schwarzschild black hole exterior; $E_{1}^{\pm}$ corresponds to the
event horizons and $S_{1}^{\pm}$ to null infinity.

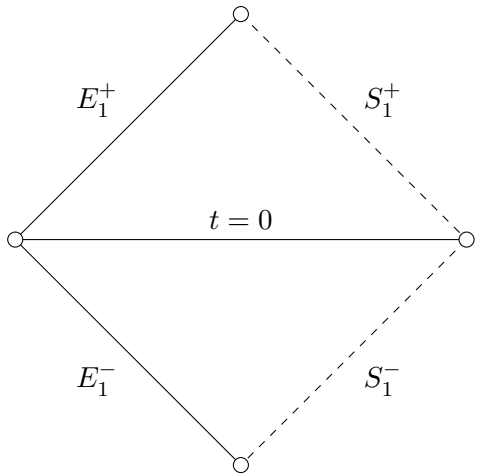
\begin{figure}[htp]
  \centering
  \begin{tikzpicture}
    \coordinate (left) at (0,0);
    \coordinate (top) at (3,3);
    \coordinate (right) at (6,0);
    \coordinate (bottom) at (3,-3);

    \draw [-] (top) -- (left) node [pos=0.5, anchor=south east] {$E_{1}^{+}$};
    \draw [-,dashed] (top) -- (right) node [pos=0.5, anchor = south
    west] {$S_{1}^{+}$};
    \draw [-,dashed] (right) -- (bottom) node[pos = 0.5, anchor =
    north west]{$S_{1}^{-}$};
    \draw [-] (bottom) -- (left) node[pos=0.5, anchor = north east]{$E_{1}^{-}$};
    \draw [-] (left) -- (right) node[pos=0.5, anchor = south]{$t=0$};

    \filldraw [fill=white, draw=black] (bottom) circle (0.1);
    \filldraw [fill=white, draw=black] (top) circle (0.1);
    \filldraw [fill=white, draw=black] (left) circle (0.1);
    \filldraw [fill=white, draw=black] (right) circle (0.1);
    
  \end{tikzpicture}
  \caption{The Penrose diagram of the Schwarzschild exterior.
    $E_{1}^{\pm}$ are the event horizons for the black and white
    holes, while $S_{1}^{\pm}$ are the future and past null infinities.}
  \label{fig:penrose-diag}
\end{figure}

In this paper we study the behavior of solutions to the Cauchy problem
in terms of their radiation fields.  We first partially compactify the
Schwarzschild spacetime to a manifold with corners and find expansions
for solutions of equation~\eqref{eq:IVP} at each boundary
hypersurface.  Two of the boundary hypersurfaces correspond to null
infinity and the event horizon.  Currently we do not include a full
compactification, as we omit temporal infinity.  A full
compactification on which solutions are well-behaved is expected to be
somewhat complicated, as it must take into account the different
expected behaviors of solutions at the event horizon, null infinity,
and the photon sphere ($r=3M$).

The main regularity result of this paper is the following:
\begin{theorem}
  \label{thm:regularity1}
  If the initial data $(\phi, \psi)$ has an asymptotic
  expansion at $r=2M$, i.e., if
  \begin{equation*}
    \phi=(r-2M)^{\lambda/2}\tilde{\phi}(\sqrt{r-2M}, \omega),\quad
    \psi=(r-2M)^{\lambda/2}\tilde{\psi}(\sqrt{r-2M}, \omega),
  \end{equation*}
  where $ \tilde{\phi},\tilde{\psi}\in C^{\infty}([0,\infty)\times\sphere^{2})$, 
  then the solution $u$ has an asymptotic expansion in terms of $e^{\tau/4M}$ at $\tau = -\infty$ near
  the event horizon and the $k$-th term (defined by equation~\eqref{kthterm.eventhorizon}) in the expansion is
  $C^{l,\delta}$ up to the event horizon, where $0 < \lambda + k = l +
  \alpha$ and $\delta = \min \{ \alpha - \epsilon, 1/2\}$ for
  any $\epsilon > 0$.

  Similarly, if the initial data has a classical asymptotic expansion
  at infinity, i.e., if
  \begin{equation*}
    \phi=r^{-\lambda-1}\tilde{\phi}(1/r, \omega),\quad
    \psi=r^{-\lambda-2}\tilde{\psi}(1/r, \omega),
  \end{equation*}
  where $ \tilde{\phi},\tilde{\psi}\in C^{\infty}([0,1/2M)\times\sphere^{2})$, 
  then the solution $u$
  has a (polyhomogeneous) asymptotic expansion in terms of $-1/\bar{\tau}$ at $\bar{\tau}=-\infty$ near null infinity and
  the $k$-th term (defined by equation~\eqref{kthterm.nullinfinity}) in the expansion is $C^{l,\delta}$ up to null infinity,
  where $l$ and $\delta$ are as above.
\end{theorem}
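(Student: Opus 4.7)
The event horizon and null infinity halves of the theorem are proved by parallel arguments, so I focus on the event horizon and indicate the modifications needed at null infinity. The strategy is to introduce a boundary-defining function for each of these hypersurfaces, rewrite the wave operator in the new coordinates so that it becomes a degenerate hyperbolic operator regular up to the new boundary, and then peel off the terms of the expansion iteratively via a formal-series-plus-remainder argument.

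Near the event horizon I pass to incoming Eddington--Finkelstein coordinates $(\tau, r, \omega)$ and introduce $x = e^{\tau/4M}$, so that $\{x = 0\}$ becomes a smooth boundary hypersurface and $\pd[\tau] = (x/4M)\pd[x]$. In these coordinates $r^{2}\Box_{S}$ extends across $\{x = 0\}$ as an operator that is second order in $\pd[r]$ and first order in $x\pd[x]$, with a regular singular point at $r = 2M$. Plugging the ansatz $u \sim \sum_{k \ge 0} x^{\lambda + k} v_{k}(r, \omega)$ into $\Box_{S} u = 0$ and matching powers of $x$ yields, for each $k$, a second-order ODE in $r$ (coupled to the Laplacian on $\sphere^{2}$) with a regular singular point at $r = 2M$ whose indicial equation selects a unique branch regular at the horizon. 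The remaining free parameter is fixed by matching to the Cauchy data at $\{t = 0\}$ through the identity $e^{\tau/4M} \sim c\sqrt{r - 2M}$, whose coefficient is prescribed by the asymptotic hypothesis on $(\phi, \psi)$; this produces the $k$-th term of equation~\eqref{kthterm.eventhorizon}. At null infinity the parallel construction is carried out in outgoing coordinates with expansion variable $-1/\bar\tau$ and requires the inclusion of logarithmic terms at the orders where the indicial roots of the corresponding operator differ by an integer, which accounts for the polyhomogeneous rather than classical character of the expansion there.

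With the first $N$ terms in hand, write $u = \sum_{k < N} x^{\lambda + k} v_{k} + R_{N}$; the remainder satisfies a wave equation with source of size $O(x^{\lambda + N})$ and zero Cauchy data at $\{t = 0\}$. Weighted energy estimates for the compactified operator, commuted with the tangential vector fields $x\pd[x]$, $\pd[r]$, and rotations of $\sphere^{2}$, yield pointwise bounds of the form $|R_{N}| \lesssim x^{\lambda + N}$, and interpolating between these tangential bounds and an $H^{1}$-type bound on the transverse derivative gives H\"older regularity of order $\min\{\alpha - \epsilon, 1/2\}$ for each term in the expansion. The cap at $1/2$ reflects the half-power loss incurred when a transverse derivative is applied through the change of variables $x = e^{\tau/4M}$.

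The principal obstacle is that global energy estimates on the full Schwarzschild exterior lose derivatives through the trapping at the photon sphere $r = 3M$. Since we are after regularity up to the boundary rather than long-time decay, this is circumvented by working in a neighborhood of each boundary separately and using finite speed of propagation, together with standard interior regularity, to control the contribution of the initial data in the trapping region over any bounded time interval.
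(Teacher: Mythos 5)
Your overall strategy (build the asymptotic expansion term by term, then estimate the remainder by weighted energy estimates) is a legitimate alternative to the paper's route, which instead never constructs the coefficients: it applies the ``peeling'' operators $w^{k}=\prod_{i=1}^{k}(b\pd[b]-\lambda-i)\tilde{u}$ of equation~\eqref{kthterm.eventhorizon} and runs iterated weighted energy estimates (Propositions~\ref{prop:e2} and \ref{prop:e3}) to show each $w^{k}$ gains decay at $E_{0}$ and hence regularity at $E_{1}^{+}$. However, your construction of the expansion has a genuine gap: you take the coefficients $v_{k}$ to be functions of $(r,\omega)$, whereas the expansion in powers of $e^{\tau/4M}$ lives on the blown-up face $E_{0}$ of Figure~\ref{fig:partial-comp}, so the coefficients must be functions of $(a,\omega)$ with $a=e^{-(t+r)/2M}$. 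Concretely, in $(\tau,\rho,\omega)$ coordinates $\Box_{S}$ is $\tau$-independent, so substituting $u\sim\sum_{k}e^{(\lambda+k)\tau/4M}v_{k}(\rho,\omega)$ makes the hierarchy decouple into \emph{homogeneous} mode equations $L_{\lambda+k}v_{k}=0$; each such mode is an exact solution growing like $e^{(\lambda+k)t/4M}$, and after selecting the branch regular at $\rho=0$ you have one free constant per mode and spherical harmonic, which cannot be matched to the two independent pieces of Cauchy data $(\phi,\psi)$ --- already the matching along $\{t=0\}$ is over-determined. Moreover, in the forward region $t\geq 0$ the regime $\tau\to-\infty$ forces $\rho\lesssim e^{\tau/2M}$, so your coefficients are only ever sampled at $\rho=0$ and the ansatz has no room for the genuine dependence on $a\in[0,e^{-1}]$ that the actual leading profile (determined by a transport equation along $E_{0}$ from the data at $E_{0}\cap\{t=0\}$) possesses.

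A second, related issue is the source of the exponents. The cap $\delta\leq 1/2$ does not come from the change of variables $x=e^{\tau/4M}$ (that substitution acts in the direction tangent to the horizon); it comes from the fact that $E_{1}^{+}$ is a characteristic hypersurface, so the energy flux through the spacelike foliation $\{T_{1}'=s\}$ controls only $a\left|\pd[a]v\right|^{2}$ rather than $\left|\pd[a]v\right|^{2}$, giving $\left|\pd[a]v\right|\lesssim a^{-1/2}$ and hence only $C^{1/2}$ transversally; the $\alpha-\epsilon$ arises from the weights $\lambda_{i}=\lambda+i-\epsilon_{i}$ needed to close the iteration in Proposition~\ref{prop:e3}. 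Your proposal does not explain either mechanism, and at null infinity it also misattributes the logarithmic terms: in the paper they are forced by the logarithmic failure of $\{t=0\}$ to meet $S_{0}$ smoothly (visible in the $\log\bar{\rho}$ terms in the lift of $\bar{b}\pd[\bar{b}]$), not by integer coincidences of indicial roots. Your remarks on trapping are fine --- the estimates are localized near the boundary faces, so the photon sphere plays no role --- but the construction of the expansion needs to be redone on the blown-up space before the remainder estimate can even be formulated.
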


\begin{remark}
  \label{rem:partial-expansion}
  The above theorem shows that if the initial data are smooth and have
  asymptotic expansions at $r=2M$ and $r=\infty$, then the solution
  has an asymptotic expansion at $E_{0}$ and $S_{0}$ (as pictured in
  Figure~\ref{fig:partial-comp}).  The regularity at $E_{1}^{+}$
  (respectively, $S_{1}^{+}$) of each term in this expansion is
  determined by its rate of decay at $E_{0}$ (respectively, $S_{0}$)
  and hence by the expansion of the initial data.  The structure of
  the partial compactification is discussed in
  Section~\ref{sec:part-comp}.
\end{remark}

\begin{figure}[htp]
  \centering
  \begin{tikzpicture}
    \coordinate (ll) at (0,0);
    \coordinate (lr) at (6,0);
    \coordinate (ml) at (0,2); 
    \coordinate (mr) at (6,2);
    \coordinate (tl) at (2,4);
    \coordinate (tr) at (4,4);
    \coordinate (control) at (3,-0.25);
    \coordinate (a) at (0,1.5);
    \coordinate (b) at (0.5, 2.5);
    \coordinate (rho) at (1.5, 2.5);
    \coordinate (tau) at (1.5, 3.5);
    \coordinate (abar) at (6,1.5);
    \coordinate (bbar) at (5.5, 2.5);
    \coordinate (rhobar) at (4.5, 2.5);
    \coordinate (taubar) at (4.5, 3.5);

    \draw [->] (ml) -- (a) node[anchor=west]{$a$};
    \draw [->] (ml) -- (b) node[anchor=north west]{$b$};
    \draw [->] (1,3) -- (rho) node[anchor= north west]{$\rho$};
    \draw [->] (1,3) -- (tau) node[anchor = north west] {$\tau$};

    \draw [->] (mr) -- (abar) node[anchor = east] {$\bar{a}$};
    \draw [->] (mr) -- (bbar) node [anchor = north east]{$\bar{b}$};
    \draw[->] (5,3) -- (rhobar) node [anchor = north
    east]{$\bar{\rho}$};
    \draw [->] (5,3) -- (taubar) node [anchor = north east]{$\bar{\tau}$};

    \draw [-] (ll) node[anchor=north]{$r=2M$} .. controls (control) .. (lr)
    node[anchor=north]{$r=\infty$} node[pos=0.5, anchor=south]{$t=0$};
    \draw [-] (ll) -- (ml) node[pos=0.5,anchor=east]{$E_{0}$};
    \draw [-] (lr) -- (mr) node[pos=0.5,anchor=west]{$S_{0}$};
    \draw [-] (ml) -- (tl) node[pos=0.5, anchor = south
    east]{$E_{1}^{+}$};
    \draw [-] (mr) -- (tr) node[pos=0.5,anchor = south west]{$S_{1}^{+}$};

  \end{tikzpicture}
  \caption{The compactification of Schwarzschild spacetime for $t\geq
    0$: $E_1^+$ is the event horizon; $S_1^+$ is null infinity; $E_0$
    and $S_0$ are from the blow-up of the spatial ends in the Penrose
    diagram of the spacetime.}
  \label{fig:partial-comp}
\end{figure}
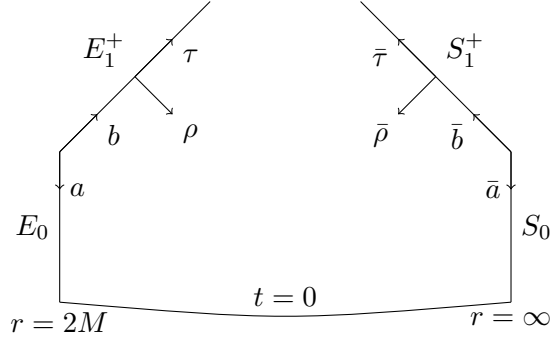

One consequence of Theorem~\ref{thm:regularity1} is the existence of
the radiation field, i.e., that solutions of equation~\eqref{eq:IVP}
may be restricted to the event horizon ($E_{1}^{+}$) and null infinity
($S_{1}^{+}$).  As there are two spatial ends of the Schwarzschild
black hole exterior, our definition of the radiation field has two
components.  For smooth initial data $(\phi,\psi)$ compactly supported
in $(2M, \infty) \times \sphere^{2}$, we define the two components of
the forward radiation field as follows:
\begin{align*}
 &\mathcal{R}_{\horiz}(\phi,\psi)(\tau,\omega)=\lim_{r\rightarrow 2M} \pd[t]u(\tau-r-2M\log(r-2M),r,\omega),\\
 &\mathcal{R}_{\scri}(\phi,\psi)(\bar{\tau},\omega)=\lim_{r\rightarrow \infty} r\pd[t]u(\bar{\tau}+r+2M\log(r-2M),r,\omega).
\end{align*}
The backward radiation fields $\mathcal{R}_{\horizm}$ and
$\mathcal{R}_{\scrim}$ are defined analogously in
equation~\eqref{eq:backward-rad-field}.

We show that, under our definition, the radiation
field is unitary (i.e., norm-preserving):
\begin{theorem}
  \label{thm:unitary1}
  Given $(\phi, \psi)$ with finite energy, the radiation field of the
  solution $u$ of equation~\eqref{eq:IVP} is unitary, i.e.,
  \begin{equation*}
    4M^{2}\norm[L^{2}(\reals \times
    \sphere^{2})]{\mathcal{R}_{\horiz}(\phi, \psi)}^{2} +
    \norm[L^{2}(\reals\times \sphere^{2})]{\mathcal{R}_{\scri}(\phi,
      \psi)}^{2} = E(0).
  \end{equation*}
\end{theorem}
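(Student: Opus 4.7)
The plan is to prove the identity first for initial data $(\phi,\psi) \in C_{c}^{\infty}((2M,\infty) \times \sphere^{2})^{2}$, and then extend it by continuity to the full finite-energy space. For such smooth, compactly supported data, Theorem~\ref{thm:regularity1} provides the regularity and pointwise decay of $u$ at both $E_{1}^{+}$ and $S_{1}^{+}$ needed to justify the computation that follows.

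The main tool is the divergence theorem applied to the Killing energy current $J^{a} = T^{ab}(\pd[t])_{b}$, where $T_{ab}$ is the stress--energy tensor of $u$. Since $\pd[t]$ is Killing on the Schwarzschild exterior, $\nabla_{a} J^{a} = 0$, and its flux through $\{t=0\}$ with future-directed normal is exactly $E(0)$. I would truncate the causal future of $\{t=0\}$ in the partially compactified spacetime by
\begin{equation*}
  \mathcal{H}_{T} = \bigl(E_{1}^{+} \cap \{\tau \le T\}\bigr) \cup \Sigma_{T} \cup \bigl(S_{1}^{+} \cap \{\bar{\tau} \le T\}\bigr),
\end{equation*}
where $\Sigma_{T}$ is a spacelike cap (for instance, the portion of $\{t=T\}$ with $r$ in a bounded interval) gluing the two null pieces into a Lipschitz hypersurface. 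Applying the divergence theorem in the region bounded below by $\{t=0\}$ and above by $\mathcal{H}_{T}$ gives
\begin{equation*}
  E(0) = \int_{E_{1}^{+} \cap \{\tau \le T\}} \!\! J \cdot L_{+} + \int_{\Sigma_{T}} J \cdot n + \int_{S_{1}^{+} \cap \{\bar{\tau} \le T\}} \!\! J \cdot L_{-},
\end{equation*}
where $L_{\pm}$ are the null generators of the two limiting null hypersurfaces.

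In incoming Eddington--Finkelstein coordinates the induced area element on $E_{1}^{+}$ is $4M^{2}\,d\tau\,d\omega$, the Killing field $\pd[t]$ restricts to a null generator of $E_{1}^{+}$, and $J \cdot L_{+} = (\pd[t] u)^{2}$ there; letting $T \to \infty$, the horizon term therefore converges to $4M^{2}\,\norm[L^{2}(\reals \times \sphere^{2})]{\mathcal{R}_{\horiz}(\phi,\psi)}^{2}$. In outgoing coordinates, the induced area form on a sphere of radius $r$ carries a factor $r^{2}\,d\bar{\tau}\,d\omega$, and the integrand $r^{2}(\pd[t] u)^{2}$ converges to $(r\,\pd[t] u)^{2}\,d\bar{\tau}\,d\omega$ as $r \to \infty$, so the null infinity term tends to $\norm[L^{2}(\reals \times \sphere^{2})]{\mathcal{R}_{\scri}(\phi,\psi)}^{2}$. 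Both limits are justified by the pointwise asymptotic expansions furnished by Theorem~\ref{thm:regularity1}, combined with dominated convergence to handle the remainders.

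The hard part will be to show that the cap flux $\int_{\Sigma_{T}} J \cdot n$ tends to zero as $T \to \infty$---equivalently, that no energy remains trapped in a bounded range of $r$ at arbitrarily late times. This is precisely an integrated local energy decay statement on the Schwarzschild exterior, and it is where the trapped null geodesics on the photon sphere $r=3M$ create the principal analytic difficulty; I would invoke the Morawetz-type estimates of Dafermos--Rodnianski and their collaborators, which give more than enough decay for compactly supported data. Finally, the inequality obtained by discarding the nonnegative cap term in the displayed identity shows that $\mathcal{R}_{\horiz}$ and $\mathcal{R}_{\scri}$ are bounded maps from the energy space to $L^{2}$, so the identity established on the dense subspace $C_{c}^{\infty}((2M,\infty) \times \sphere^{2})^{2}$ passes to the full finite-energy space by continuity.
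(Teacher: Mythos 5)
Your overall strategy---an energy-flux identity for the Killing current, with the horizon and null-infinity fluxes producing the two $L^{2}$ norms, a remainder term that must be shown to vanish, and a density argument at the end---is the same as the paper's, which splits the conserved energy $E(t)$ on constant-$t$ slices into a piece with $r+2M\log(r-2M)\leq \lambda - t$ (converging to $4M^{2}\|\mathcal{R}_{\horiz}\|^{2}$ over $\tau\leq\lambda$), a piece with $r+2M\log(r-2M)\geq t-\lambda$ (converging to $\|\mathcal{R}_{\scri}\|^{2}$ over $\bar{\tau}\leq\lambda$), and a middle term II, and then sends $t\to\infty$ followed by $\lambda\to\infty$. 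However, your geometric setup does not quite close up: the endpoint $\tau=T$ of $E_{1}^{+}$ and the endpoint $\bar{\tau}=T$ of $S_{1}^{+}$ are both ideal points reached only as $t\to\infty$ (with $r\to 2M$ and $r\to\infty$ respectively), so a spacelike cap contained in $\{t=T\}$ with $r$ in a \emph{bounded} interval cannot join the two truncated null pieces into the boundary of a region with $\{t=0\}$. The honest version of your hypersurface $\mathcal{H}_{T}$ is either a hyperboloidal-type slice reaching from the horizon to null infinity, or (as in the paper) an iterated limit over constant-$t$ slices; either way the ``cap'' is the entire expanding region $\{\lambda-t\leq r+2M\log(r-2M)\leq t-\lambda\}$, not a spatially compact set.

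This matters because it undermines the key step, which you have essentially asserted rather than proved: that the cap flux vanishes. Since the remainder region grows linearly in $t$ between the two null cones, integrated local energy decay (Morawetz) estimates---which control spacetime integrals of the energy density over \emph{compact} ranges of $r$, with degeneration at the photon sphere---do not directly apply to it. Indeed, the paper explicitly remarks that ``the local energy decay bounds in the literature do not seem quite strong enough for our purposes,'' and instead invokes the pointwise bounds from Price's law (Tataru, Metcalfe--Tataru--Tohaneanu), namely $|u|\lesssim \langle t^{*}\rangle^{-1}\langle t^{*}-r^{*}\rangle^{-2}$ and $|\pd[t]u|\lesssim \langle t^{*}\rangle^{-1}\langle t^{*}-r^{*}\rangle^{-3}$; it is precisely the $\langle t^{*}-r^{*}\rangle$ weight that gives decay uniformly over the expanding middle region, allowing term II to be bounded by negative powers of $\lambda$ after $t\to\infty$. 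To repair your argument you would need either these pointwise bounds or a quantitative energy-decay statement through a foliation that exhausts the region between the two null cones (e.g., the $r^{p}$-weighted hierarchy giving decay of energy on hyperboloidal slices); citing ``Morawetz-type estimates'' alone leaves the central analytic point of the theorem unproved.
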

The unitarity of this map strongly depends on the fact that the
Schwarzschild background is static.  As such, it is unlikely that that
the radiation field remains unitary under non-static metric
perturbations.  Within some classes of nonlinear problems, however, it
is reasonable to expect that the norm of the radiation field agrees
with the energy norm of the solution (see, e.g., recent work of the
first author and S{\'a} Barreto~\cite{BaskinBarreto2012}).

Bachelot~\cite{Bachelot:1994} (for the Klein-Gordon equation) and
Dimock~\cite{Dimock:1985} (for the wave equation) carried out related
work on the unitarity of wave operators in a more abstract
scattering-theoretic setting.

We also prove the following support theorem for the radiation field
(stated more precisely in Section~\ref{sec:supp-theor-radi}):
\begin{theorem}
  \label{thm:support-both}
  Suppose that $\phi,\psi \in C^{\infty}_{c}((2M, \infty) \times \sphere^{2})$.  If
  $\mathcal{R}_{\horiz}(\phi,\psi)$ vanishes for $\tau \leq \tau_{0}$ and
  $\mathcal{R}_{\horizm}(\phi,\psi)$ vanishes for $\tau \geq -\tau_{0}$,
  then both $\phi$ and $\psi$ are supported in $[r_{0},\infty)\times
  \sphere^{2}$, where $r_{0}$ is given implicitly by
  \begin{equation*}
    r_{0} + 2M \log (r_{0}-2M) = \tau_{0}.
  \end{equation*}
  An analogous statement holds for the component of the radiation field
  corresponding to null infinity.  
\end{theorem}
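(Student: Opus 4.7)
The plan is to reduce to a 1+1-dimensional support theorem via spherical-harmonic decomposition and then to establish the 1+1 version using the asymptotic expansion from Theorem~\ref{thm:regularity1} together with a Goursat argument in the null quadrilateral cut out by the hypotheses.

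First I would write $\phi = \sum_\ell \phi_\ell(r)Y_\ell(\omega)$ and $\psi = \sum_\ell \psi_\ell(r)Y_\ell(\omega)$. Each modal pair is smooth and compactly supported in $(2M,\infty)$, the radiation fields decompose compatibly, and the vanishing hypotheses transfer to each mode. Introducing the tortoise coordinate $r^{*} = r+2M\log(r-2M)$ and setting $v_\ell = r u_\ell$, the wave equation on the $\ell$-th mode becomes the 1+1-dimensional equation $(-\pd[t]^{2}+\pd[r^{*}]^{2}-V_\ell)v_\ell = 0$, where $V_\ell$ is a Regge--Wheeler-type potential decaying exponentially as $r^{*}\to -\infty$. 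It therefore suffices to prove the analogous support theorem for each 1+1 problem.

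In null coordinates $\tau = t+r^{*}$, $\bar{\tau} = t-r^{*}$, the equation takes the form $4v_{\tau\bar{\tau}} = -V_\ell v$. Theorem~\ref{thm:regularity1} provides an expansion of $v_\ell$ at the event horizon whose leading term is of the form $F(\tau) + G(\bar{\tau})$, with the forward radiation field $\mathcal{R}_{\horiz}$ matching $F'(\tau)$ and the backward radiation field $\mathcal{R}_{\horizm}$ matching $G'(\bar{\tau})$ up to explicit constants. The hypotheses then force $F$ to be constant on $\{\tau\le\tau_{0}\}$ and $G$ to be constant on $\{\bar{\tau}\ge -\tau_{0}\}$. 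Since the initial data have compact support, finite propagation speed in $(t,r^{*})$-coordinates implies $v_\ell\to 0$ along $\horiz$ as $\tau\to -\infty$ and along $\horizm$ as $\bar{\tau}\to +\infty$; hence these integration constants vanish, and $v_\ell$ itself vanishes on the segment $\{\tau\le\tau_{0}\}$ of $\horiz$ and on the segment $\{\bar{\tau}\ge -\tau_{0}\}$ of $\horizm$. These two null segments meet at the bifurcation sphere and, together with the interior null rays $\{\tau=\tau_{0}\}$ and $\{\bar{\tau}=-\tau_{0}\}$ emanating from the point $(r^{*},t) = (\tau_{0},0)$, bound a null quadrilateral $R$. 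Treating $4v_{\tau\bar{\tau}} = -V_\ell v$ as a Volterra integral equation in null coordinates with vanishing Goursat data on the two horizon edges, a standard Gronwall estimate (using that $V_\ell$ is integrable in $R$ thanks to its exponential decay) yields $v_\ell\equiv 0$ on $R$. Intersecting with $\{t=0\}$ and summing over $\ell$ gives $\phi=\psi=0$ on $\{r\le r_{0}\}\times\sphere^{2}$, where $r_{0}+2M\log(r_{0}-2M)=\tau_{0}$.

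The statement at null infinity follows from the same strategy applied at the $r^{*}\to +\infty$ end, using the corresponding expansion from Theorem~\ref{thm:regularity1}. The main obstacle I anticipate is making the Goursat step precise at the bifurcation sphere, which is an ideal boundary in the $(t,r^{*})$ chart: I expect to work in Kruskal--Szekeres coordinates (in which the bifurcation sphere is a regular sphere and $R$ is a genuine compact null region) or, equivalently, to exhaust $R$ by compact null sub-quadrilaterals and pass to a limit, using the regularity across the horizon afforded by Theorem~\ref{thm:regularity1}.
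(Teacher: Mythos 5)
Your treatment of the event horizon is essentially sound and close to the paper's argument: the paper also decomposes into spherical harmonics and reduces to a $1+1$-dimensional problem in Kruskal coordinates $(\mu,\nu)=(e^{\tau_{+}/4M}, e^{-\tau_{-}/4M})$, in which the bifurcation sphere is regular and the region $\{\mu\leq c_{0},\nu\leq c_{0}\}$ is a genuine characteristic rectangle. Where you invoke Goursat uniqueness with vanishing data on the two horizon segments, the paper instead first proves \emph{infinite-order} vanishing of $u$ at $\{\nu=0,\mu\in[0,c_{0}]\}$ and $\{\mu=0,\nu\in[0,c_{0}]\}$ (by inductively differentiating the equation), extends each $u_{j}$ by zero across the horizons, and applies finite speed of propagation with respect to $\mu+\nu$; the two mechanisms are interchangeable here, and your version is arguably lighter since characteristic uniqueness for $\pd[\mu]\pd[\nu]w=a\pd[\mu]w+b\pd[\nu]w+cw$ needs only the boundary values. (Your description of the expansion at the horizon as having leading term $F(\tau)+G(\bar{\tau})$ is not literally what Theorem~\ref{thm:regularity1} gives, but the only fact you need --- that $u$ restricted to $\horiz$ has $\tau$-derivative equal to the radiation field and tends to $0$ as $\tau\to-\infty$, hence vanishes on the segment --- is correct and is exactly how the paper argues. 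You do need the joint smoothness of $u$ in $(\mu,\nu)$ up to the closed rectangle to run the iteration; this is the content of the paper's Lemma~\ref{lem:smooth-horiz}.)

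The genuine gap is in the sentence asserting that ``the statement at null infinity follows from the same strategy applied at the $r^{*}\to+\infty$ end.'' It does not, for two concrete reasons. First, there is no analogue of Kruskal coordinates there: as the paper emphasizes, $\scri$ and $\scrim$ do not meet smoothly at spatial infinity (the compactification carries logarithmic corrections), so there is no coordinate system in which the corner of your null quadrilateral is a regular point and the Goursat problem is posed on a compact rectangle with smooth coefficients. Second, your fallback --- exhausting by compact sub-quadrilaterals and passing to the limit via a Volterra/Gronwall estimate --- relies on integrability of the potential over the region, and this fails at the $r^{*}\to+\infty$ end: there $V_{\ell}\sim \ell(\ell+1)/(r^{*})^{2}\sim \ell(\ell+1)(\tau-\bar{\tau})^{-2}$, and $\iint_{\{\tau\geq\tau_{0},\,\bar{\tau}\leq-\tau_{0}\}}(\tau-\bar{\tau})^{-2}\,d\tau\,d\bar{\tau}$ diverges, so the iteration from the ideal corner does not close. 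The paper circumvents both problems with an incremental argument: assuming the data are supported in $r\leq r_{1}$, it shows $v_{j}=r u_{j}$ vanishes to infinite order at $\bar{\rho}=0$ for $\bar{\tau}_{+}\leq-\bar{\tau}_{1}$ and $\bar{\tau}_{-}\geq\bar{\tau}_{1}$, extends by zero to get vanishing on a small but uniform neighborhood $\{\bar{\rho}\leq\delta\}$ of those segments, and then uses hyperbolicity of the radial $1+1$ operator with respect to $\pd[r]$ (treating $r$ as the evolution variable) to push the support bound from $r_{1}$ down to some $\tilde{r}_{1}<r_{1}$, iterating until $r_{0}$ is reached. Some argument of this type --- or another device that quantifies propagation across the non-smooth corner at spatial infinity --- is needed to complete your proof of the null-infinity half of the theorem.
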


\begin{remark}
  The smoothness hypothesis above is not essential; we can relax the
  assumption to $C^{2,\alpha}$ for $\alpha > 0$.  In particular, we
  require only enough smoothness to ensure that the rescaled solution
  is $C^{2,\alpha}$ after being extended by zero across the event
  horizon or null infinity.  In fact, the support hypothesis near the
  event horizon can be relaxed as well; if the initial data has enough
  decay there (taking $\lambda > 2$ in Theorem~\ref{thm:regularity1}
  should suffice), then the rescaled solution will still have enough
  smoothness for the uniqueness theorems to apply.
  
  Near infinity, however, it is important that we take compactly
  supported data, as the past and future null infinities do not meet
  smoothly.  Even if this difficulty were overcome, a strong decay
  condition must be assumed to rule out the counter-examples that
  exist already in Minkowski space.  In particular, for any $m\in
  \mathbb{N}$, there are smooth functions $f(z)$ in $\reals^{n}$ which
  are not compactly supported, decay like $|z|^{-m}$, and whose Radon
  transform (and hence radiation field) is compactly supported (see,
  e.g., \cite{Helgason:1999}).
\end{remark}

In the setting of Minkowski space, the Fourier transform of the
forward radiation field in the $s$ variable is given in terms of the
Fourier transform of the initial data (and is related to the Radon
transform).  In other settings, one may think of the Fourier
transform of the radiation field as a distorted Fourier
transform.  The support theorem can then be seen as a
Paley--Wiener theorem for a distorted Fourier-type transform on
the Schwarzschild spacetime.  Similar support theorems in other
contexts have been established by Helgason~\cite{Helgason:1999},
S{\'a} Barreto~\cite{Sa-Barreto:2008,Sa-Barreto:2005}, and the first
author and S{\'a} Barreto~\cite{BaskinBarreto2012}.

When the wave evolution can be compared to a fixed background
evolution (as is the case for perturbations of Minkowski space), one
may define the M{\o}ller wave operator.  Given a solution of the
perturbed problem, one may often find a solution to the free problem
to which the solution of the perturbed problem scatters.  The
M{\o}ller wave operators associate the initial data for the solution
of the perturbed problem to the initial data for the solution of the
``free'' problem.  (We refer the reader to the book of Lax--Phillips
for more details \cite{Lax:1989}.)  In many such settings, the
radiation field and the M{\o}ller wave operators carry the same
information and thus the existence, unitary, and smoothness of the
radiation field are equivalent to the corresponding statements for the
M{\o}ller wave operator.  In fact, in those cases, it may even be
possible to obtain the support theorem from the M{\o}ller wave
operator.  We refer the reader to the recent paper of
Donninger--Krieger~\cite{Donninger:2013} for results in this
direction.  In the setting of the Schwarzschild background, however,
there is not an obvious natural evolution upon which to base the
M{\o}ller wave operator, and so the radiation field should be thought
of as a stand-in for this scattering-theoretic object.

Although we are unable to characterize the range of the radiation
field, we do show that our definition of the radiation field captures
``too much'' information.  In particular, the support theorem above
implies the following:
\begin{corollary}
  \label{thm:support1}
  If $\psi \in C^{\infty}_{c}\left( (2M, \infty) \times
    \sphere^{2}\right)$ and $\mathcal{R}_{\horiz}(0,\psi) \equiv 0$,
  then $\psi = 0$.  Similarly, for such a $\psi$, if
  $\mathcal{R}_{\scri}(0,\psi) \equiv 0$, then $\psi = 0$.
\end{corollary}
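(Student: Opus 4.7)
The plan is to deduce the corollary from the support theorem (Theorem~\ref{thm:support-both}) by exploiting the time-reversal symmetry of the Schwarzschild metric together with the special form of the initial data $(0,\psi)$.

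First I would observe that the metric $g_S$ is invariant under $t\mapsto -t$, so if $u(t,r,\omega)$ solves equation~\eqref{eq:IVP} with initial data $(0,\psi)$, then $-u(-t,r,\omega)$ is also a solution with the same initial data. By uniqueness for the Cauchy problem we obtain the antisymmetry $u(-t,r,\omega)=-u(t,r,\omega)$, and differentiating in $t$ shows that $\pd[t]u$ is \emph{even} in $t$.

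Next I would translate this symmetry into a relation between the forward and backward radiation fields. The forward field $\mathcal{R}_{\horiz}(0,\psi)(\tau,\omega)$ is the limit of $\pd[t]u$ along the curves $t+r+2M\log(r-2M)=\tau$ as $r\to 2M$ (so $t\to +\infty$). Applying $\pd[t]u(t,r,\omega)=\pd[t]u(-t,r,\omega)$ replaces these curves by the time-reversed curves approaching the past event horizon $\horizm$; matching the natural parameter used in defining $\mathcal{R}_{\horizm}$ yields an identity of the form $\mathcal{R}_{\horizm}(0,\psi)(\tau,\omega)=\mathcal{R}_{\horiz}(0,\psi)(-\tau,\omega)$ (up to the sign convention in equation~\eqref{eq:backward-rad-field}). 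In particular, $\mathcal{R}_{\horiz}(0,\psi)\equiv 0$ forces $\mathcal{R}_{\horizm}(0,\psi)\equiv 0$.

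With both radiation fields identically zero, the hypothesis of Theorem~\ref{thm:support-both} is satisfied for \emph{every} value of $\tau_0\in\reals$. Thus $\psi$ must be supported in $[r_0,\infty)\times\sphere^{2}$ for every $r_0>2M$ with $r_0+2M\log(r_0-2M)=\tau_0$; letting $\tau_0\to\infty$ we have $r_0\to\infty$, and the compact support of $\psi$ in $(2M,\infty)\times\sphere^{2}$ forces $\psi\equiv 0$. The statement for $\mathcal{R}_{\scri}$ follows by the same argument using the analogous support statement at null infinity referenced in Theorem~\ref{thm:support-both}.

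The only substantive point is the identification of $\mathcal{R}_{\horizm}(0,\psi)$ with a reflection of $\mathcal{R}_{\horiz}(0,\psi)$: this is conceptually immediate from the evenness of $\pd[t]u$, but requires carefully matching the parameterizations used on the future and past boundary hypersurfaces. Once that bookkeeping is done, the support theorem does all of the real work.
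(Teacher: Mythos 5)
Your proposal is correct and is essentially the paper's own argument: the paper proves this (Corollaries~\ref{cor:odd-data-horizon} and~\ref{cor:odd-data-scri}) by using time-reversibility of the wave equation to get $\mathcal{R}_{\horiz}(0,\psi)(\tau,\omega)=\mathcal{R}_{\horizm}(0,\psi)(-\tau,\omega)$ (with no extra sign, since $u$ odd in $t$ makes $\pd[t]u$ even), and then applying the support theorems for every $\tau_{0}$ exactly as you do. The only cosmetic difference is that you flag the sign/parameterization bookkeeping as a point to check, whereas the paper simply records the resulting identity.
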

In other words, for odd, smooth, compactly supported data, knowing
that one component of the radiation field vanishes implies that it must
vanish on the other component as well.

The study of the asymptotic behavior of solutions of the wave equation
on the Schwarzschild background has been an active field of research
(see, e.g., the work of Dafermos--Rodnianski~\cite{Dafermos:2009a}, as
well as the works of Blue--Sterbenz~\cite{Blue:2006},
Blue--Soffer~\cite{Blue:2009}, Marzuola et al.~\cite{Marzuola:2010},
and Luk~\cite{Luk:2010}) and we refer the reader to the lecture notes
of Dafermos and Rodnianski~\cite{Dafermos-Rodnianski:notes} for many
references.  In Section~\ref{sec:extens-energy-space} we view the
unitarity of the radiation field as a consequence of the proofs of
Price's law (see the works of Tataru~\cite{Tataru:2013},
Metcalfe--Tataru--Tohaneanu~\cite{Metcalfe:2012}, and
Donninger--Schlag--Soffer~\cite{Donninger:2012}), though we remark
that a weaker energy decay result would suffice.

Section~\ref{sec:part-comp} describes the partial compactification on
which we work, while in Section~\ref{sec:radiation-field}, we prove
Theorem~\ref{thm:regularity1} via energy estimates.
Section~\ref{sec:extens-energy-space}
is devoted to the proof of Theorem~\ref{thm:unitary1} and relies on the
pointwise decay of solutions established by the proof of Price's law.
Finally, in Section~\ref{sec:supp-theor-radi}, we prove the support theorems.

\subsection{Notation}
\label{sec:notation}

In this section we lay out some notation common to the entire paper.

The regularity and asymptotic behavior of solutions near $E_{1}^{+}$
and $S_{1}^{+}$ may be obtained via energy estimates.  As a
preparation, for any function $v$ and time-like function $T$, we give
a name to the vector field obtained by contracting a modified
stress-energy tensor\footnote{We modify the stress-energy tensor by
  including a $v^{2}$ term in order to control inhomogeneous Sobolev
  norms.} of $v$ (with respect to a Lorentzian metric $g$) with the
gradient of $T$:
\begin{equation*}
  \mathcal{F}_{g}(T,v) = \langle \grad T, \grad v\rangle _{g} \grad v
  - \frac{1}{2} \left( \langle \grad v, \grad v\rangle_{g} + v^{2}
  \right) \grad T.
\end{equation*}
We also record
\begin{equation*}
  \operatorname{div}_{g}(\mathcal{F}_{g}(T,v)) = \langle \grad T,
  \grad v\rangle _{g} \left( \Box_{g} -1\right) v + \mathcal{Q}_{g}(T,v),
\end{equation*}
where
\begin{equation*}
  \mathcal{Q}_{g}(T,v) = \operatorname{Hess}_{g}(T)(dv,dv) -
  \frac{1}{2}\Box_{g}T\left( \langle \grad v, \grad v\rangle_{g} + v^{2}\right).
\end{equation*}

For a manifold with corners $M$, we further require the spaces of
uniformly degenerate ($0$-) and tangential ($b$-) vector fields:
\begin{align*}
  \mathcal{V}_{0} &= \left\{ X \in C^{\infty}(M , TM): X \text{
      vanishes at }\pd M\right\} \\
  \mathcal{V}_{b} &= \left\{ X \in C^{\infty}(M, TM): X \text{ is
      tangent to }\pd M\right\}
\end{align*}
For a given measure $\dmu$, the $0$-Sobolev space $H^{1}_{0}(M,\dmu)$
is the space of functions $u \in L^{2}(\dmu)$ so that $Xu \in
L^{2}(\dmu)$ for all $X\in \mathcal{V}_{0}$.  For an integer $N$ and a
given measure $\dmu$, the $b$-Sobolev space $H^{N}_{b}(M,d\mu)$
consists of functions $u \in L^{2}(\dmu)$ so that $X_{1}\ldots X_{k}u
\in L^{2}(\dmu)$ for $X_{j}\in \mathcal{V}_{b}$ and $k \leq N$.  The
mixed $0,b$-Sobolev space $H^{1,N}_{0,b}(M,\dmu)$ consists of those $u
\in H^{1}_{0}(M, \dmu)$ so that $X_{1}\ldots X_{k} u \in H^{1}_{0}(M,
\dmu)$ for $X_{j} \in \mathcal{V}_{b}$ and $k \leq N$.  For an
introduction to $b$-Sobolev spaces and $b$-geometry, we refer the
reader to the book of Melrose~\cite{Melrose:1993}.

\section{A partial compactification}
\label{sec:part-comp}

In this section we describe the partial compactification of
Schwarzschild spacetime for $t\geq 0$ (see Figure
\ref{fig:partial-comp}) on which we work. It can be obtained by a
suitable blow-up (with a logarithmic correction) of the Penrose
diagram in Figure \ref{fig:penrose-diag}. We describe the smooth
structure on this partial compactification by taking explicit local
coordinates near the boundary. Each set of local coordinates is valid
in the corresponding domain as in Figure~\ref{fig:partial-comp}.

We now describe coordinates giving the smooth structure on the partial
compactification (and the domains in which they are valid).
\begin{itemize}
\item Near $r=2M$ but away from temporal infinity, i.e., for
  $r-2M<\infty$ and $t+r+2M\log(r-2M)<\infty$, we choose coordinates
  $(a,b,\omega)$ with
   \[
   a=e^{-\frac{t+r}{2M}},\ b=e^{\frac{t+r}{4M}}\sqrt{r-2M}.
   \]
 \item Near the interior of the event horizon $\horiz$,
   i.e., $r-2M<\infty$ and $-\infty< t+r+2M\log(r-2M)<\infty$, we also
   can use coordinates $(\tau,\rho,\omega)$ with
   \[
   \tau=t+r+2M\log(r-2M),\ \rho=r-2M.
   \]
\end{itemize}
Here the initial surface $\{t=0\}$ for $r$ close to $2M$ is equivalent to 
$$\{a=e^{-\frac{2M+ab^2}{2M}}\}$$ 
for $a$ close to $e$ and $b$ close to $0$, 
which intersect with $E_0$ smoothly. 

For $r$ close to $\infty$, we choose corresponding coordinates as follows: 
\begin{itemize}
\item For $r$ large and close to the spatial end $S_0$, i.e., for
  $r-2M>0$ and $t-r-2M\log(r-2M)<0$, we choose coordinates
  $(\bar{a},\bar{b},\omega)$ with
  \[
  \bar{a}=\frac{-t+r+2M\log(r-2M)}{r}, \
  \bar{b}=\frac{1}{-t+r+2M\log(r-2M)}.
  \]
\item Near the interior of null infinity, i.e., $r-2M>0$ and $-\infty<
  t-r-2M\log(r-2M)<\infty$, we use coordinates
  $(\bar{\tau},\bar{\rho},\omega)$ with
  \[
  \bar{\tau}=t-r-2M\log(r-2M), \ \bar{\rho}=1/r.
  \]
\end{itemize}
Here the initial surface $\{t=0\}$ for $r$ large is equivalent to 
$$\bar{a}=1+2M\bar{a}\bar{b}\big(\log(1-2M\bar{a}\bar{b})-\log(\bar{a}\bar{b}) \big)\quad $$
for $\bar{a}$ close to $1$ and $\bar{b}$ close to $0$.  The initial surface does not
intersect $S_0$ smoothly but instead has a logarithmic correction
term. A direct consequence of this lack of smoothness is that for
classical initial data which have pure Taylor expansions at
$r=\infty$, the radiation field on $S_1^+$ will have an expansion
including logarithmic terms at $\bar{\tau}=-\infty$. We refer the
reader to Proposition \ref{prop:i4} and the surrounding discussion for
details.

\section{Existence and regularity of the radiation field}
\label{sec:radiation-field}

In this section we establish regularity properties for solutions of
equation~\eqref{eq:IVP} on a partial compactification of the
Schwarzschild background.  We also show that sufficiently regular
solutions have asymptotic expansions at the boundary hypersurfaces of
this compactification.  Taken together, Propositions~\ref{prop:e3} and
\ref{prop:i4} prove Theorem~\ref{thm:regularity1}.  

\subsection{At the event horizon}
\label{sec:event-horizon}

For $t>0$ we change coordinates to
\begin{equation*}
  \tau= t + r + 2M\log (r-2M),\ \rho = r-2M. 
\end{equation*}
The coordinates $(\tau,\rho)$ are essentially the incoming
Eddington-Finkelstein coordinates.  The function $\tau$ is a
coordinate along the event horizon $E^{+}_{1}$, while $\rho$ is a
defining function for $E_{+}^{1}$ as long as $\tau$ is bounded away
from $\pm\infty$.  
The metric and its D'Alembertian are then:
\begin{align*}
  &g_{S} = -\frac{\rho}{\rho + 2M}\dtau^{2} + 2\dtau\drho + (\rho +
  2M)^{2}\domega^{2},\\
 &\Box_{S} = 2\pd[\tau]\pd[{\rho}] + \frac{\rho}{\rho +
    2M}\pd[\rho]^{2} + \frac{1}{(\rho+2M)^{2}}\lap_{\omega} +
  \frac{2(\rho + M)}{(\rho + 2M)^{2}}\pd[\rho] + \frac{2}{\rho + 2M}\pd[\tau].
\end{align*}
We can thus extend $g_{S}$ naturally as a Lorentzian metric to a
slightly larger manifold (given in these coordinates by $\{\rho >
-\epsilon , |\tau | \leq C\}$) with $\{ \rho = 0\}$ a characteristic
hypersurface.  Moreover, if $Z_{ij}$ are the rotations of
$\sphere^{2}$, then
\begin{align*}
  &\left[ \Box_{S} , \pd[\tau] \right] =\left[ \Box_{S}, Z_{ij}\right] = 0,\\
  &\left[ \Box_{S},\rho \pd[\rho]\right] = \Box_{S} - \frac{3\rho +2M}{(\rho + 2M)^3}\lap_{\omega}+\frac{1}{(\rho+2M)^2}(\rho \pd[\rho])^{2} +V_1,\\
  &\left[ \Box_{S}, \pd[\rho]\right] = -\frac{2M}{(\rho+2M)^{2}}\pd[\rho]^{2}+\frac{2}{(\rho + 2M)^{3}}\lap_{\omega} +V_2,
\end{align*}
where $V_1, V_2$ are vector fields that are tangent to the event horizon:
\begin{align*}
 &V_1=  -\frac{4M}{(\rho+2M)^2}\pd[\tau]+ \frac{\rho - 2M}{(\rho + 2M)^3}(\rho\pd[\rho]),\\
 &V_2=  \frac{2}{(\rho + 2M)^{3}}(\rho\pd[\rho]) + \frac{2}{(\rho+2M)^{2}}\pd[\tau].
\end{align*}
In terms of the coordinates $(\tau, \rho,\omega)$, we choose a
time-like function by
\begin{equation*}
  T = \tau - \rho, \quad \langle \grad T, \grad T\rangle_{g_{S}} = -2 +
  \frac{\rho}{\rho + 2M} < -1.
\end{equation*}
We may then compute
\begin{align*}
  \langle \mathcal{F}(T,v), \grad T)\rangle_{g_{S}} & = 
  \frac{1}{2}\left( \left|\pd[\tau]v\right|^{2} + \left| \pd[\tau] v +
      \frac{\rho\pd[\rho]v}{\rho+2M}\right|^{2} \right) +
  \frac{2M\left| \pd[\rho] v\right|^{2}}{\rho + 2M} \\
  &\quad
  + \frac{\rho+4M}{2(\rho + 2M)}\left( \frac{\left|\grad_{\omega}v\right|^{2}}{(\rho + 2M)^{2}} + v^{2}\right), 
\end{align*}
\begin{align*}
 \mathcal{Q}(T,v) = -\frac{M}{(\rho + 2M)^{2}} \bigg( \left|
      \pd[\rho]v\right|^{2} + \frac{2\rho\left|\pd[\rho]v\right|^{2}}{\rho + 2M}      
       + 4 \pd[\rho]\pd[\tau]v - \frac{\left| \grad_{\omega}v\right|^{2}}{(\rho +
      2M)^{2}} + v^{2}\bigg).
\end{align*}

Using Friedlander's argument \cite{Friedlander:1980}, we now show that
solutions of the wave equation with compactly supported smooth initial
are smooth across the event horizon $E_{1}^{+}$.
\begin{proposition}
  \label{prop:e1}
  If $(\phi, \psi) \in C^{\infty}\left( (2M,\infty)_{r} \times
    \sphere^{2}\right)$ are such that $\supp (\phi)\cup \supp (\psi)
  \subset (2M+\epsilon, \infty) \times \sphere^{2}$ for some $\epsilon
  > 0$, then $u$ is smooth down to $\{ \rho = 0\}$ for all $\tau \in
  (-\infty, \infty)$.
\end{proposition}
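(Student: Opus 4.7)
The plan is to use Friedlander's trick of extending the Cauchy problem through the characteristic hypersurface $\{\rho=0\}$, thereby reducing smoothness up to the horizon to a classical interior regularity statement on an enlarged manifold. In the incoming Eddington--Finkelstein coordinates the metric $g_{S}$ and the operator $\Box_{S}$ displayed above are smooth and strictly hyperbolic on $\widetilde{M}=(-2M,\infty)_{\rho}\times\reals_{\tau}\times\sphere^{2}$, and $\{\rho=0\}$ becomes an interior characteristic hypersurface rather than a boundary. The function $T=\tau-\rho$ remains a global time function on $\widetilde{M}$, and the displayed positive-definite density $\langle\mathcal{F}(T,v),\grad T\rangle_{g_{S}}$ controls all first derivatives of $v$, including the transverse derivative $\pd[\rho]v$, uniformly up to and across $\{\rho=0\}$.

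The first step is to show that $u$ vanishes on a strip $\{\tau\leq\tau_{*},\ 0<\rho\leq\epsilon'\}$ for some $\tau_{*}\in\reals$ and $\epsilon'>0$. Since $(\phi,\psi)$ are supported in $\{r>2M+\epsilon\}$, I would trace the two past-directed null rays from a point $(\tau_{1},\rho_{1},\omega_{1})$ with $\rho_{1}\in[0,\epsilon/2]$ --- namely the ingoing family $\tau=\mathrm{const}$ and the outgoing family governed by $\drho/\dtau=\rho/[2(\rho+2M)]$ --- back to the initial surface $\{t=0\}$; a direct computation shows that both rays meet $\{t=0\}$ at $\rho<\epsilon$ once $\tau_{1}$ is sufficiently negative. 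Finite propagation speed in the Schwarzschild exterior then yields the claimed vanishing.

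Next, I would fix $T_{0}$ sufficiently negative so that the spacelike slice $\Sigma_{*}:=\{T=T_{0}\}$ intersects $\{0\leq\rho\leq\epsilon'\}$ inside the vanishing strip of $u$. On this portion of $\Sigma_{*}$ the Cauchy data of $u$ vanish identically, so extending by zero across $\{\rho=0\}$ (and matching the original $\{t=0\}$ data smoothly away from the horizon by using a cutoff supported in the vanishing region) produces smooth, compatible Cauchy data on $\Sigma_{*}\cap\widetilde{M}$. The Cauchy problem for $\Box_{S}\tilde{u}=0$ on $\widetilde{M}\cap\{T\geq T_{0}\}$ is then a standard smooth Cauchy problem for a strictly hyperbolic operator with smooth spacelike initial hypersurface. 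Iterating the energy estimate from $\mathcal{F}(T,\tilde{u})$ over slabs $\{T_{0}\leq T\leq T_{1}\}$, commuting with $\pd[\tau]$ and $Z_{ij}$ (both of which commute with $\Box_{S}$) to propagate tangential smoothness, and recovering successive $\pd[\rho]$-derivatives from the wave equation itself, yields $\tilde{u}\in C^{\infty}$. By uniqueness in the domain of dependence, $\tilde{u}=u$ on $\{\rho>0\}$, so $u$ inherits smoothness up to and across the horizon on this slab; letting $T_{1}\to\infty$ (or invoking the Killing field $\pd[\tau]$) extends the conclusion to all $\tau\in\reals$.

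The hard part will be the null-ray computation in the second paragraph: one must verify quantitatively that past outgoing null rays starting arbitrarily close to the horizon and sufficiently far back in $\tau$ trace back to $\{t=0\}$ at $\rho<\epsilon$, which is the precise manifestation of ``information not escaping the horizon in finite time'' that makes the zero-extension legitimate. All remaining ingredients --- the smooth extension of the metric across $\{\rho=0\}$, the smooth zero-extension of the Cauchy data, and the classical smooth Cauchy theory for a strictly hyperbolic operator on a manifold with spacelike initial hypersurface --- are then standard.
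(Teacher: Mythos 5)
Your proposal is correct in outline but takes a genuinely different route from the paper's. The paper never extends the solution across the horizon: it works entirely in $\{\rho\geq 0\}$ on the domain $\Omega_{t_0}$ bounded by $\{t=0\}$, $\{t=t_0\}$ and the null hypersurfaces $\{\tau=\pm\tau_0\}$, applies Stokes' theorem to $e^{-cT}\mathcal{F}(T,Z^{I}u)$ with $Z\in\{\pd[\rho],\pd[\tau],Z_{ij}\}$ (the flux through the null boundaries has a favorable sign and is discarded, and $c$ is chosen large so the divergence is non-positive), and concludes that all $Z^{I}u$ are bounded in $L^{2}$ on the slices $\{T=s\}$ uniformly up to $\rho=0$; smoothness down to the horizon follows by Sobolev embedding. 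You instead extend the metric across $\{\rho=0\}$ and reduce the claim to well-posedness of a smooth Cauchy problem on the enlarged spacetime --- the classical Penrose/Friedlander extension argument. Your route is softer (no quantitative boundary estimate) and yields the slightly stronger conclusion that $u$ extends smoothly \emph{across} the horizon, which is in fact what the paper needs later for its Lemma 5.4; the paper's route is self-contained and is the template reused for the weighted estimates of Propositions 3.3--3.6, where no extension is available.

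Two points in your plan need more care than you give them. First, your vanishing argument (tracing past-directed null rays back to $\{t=0\}$) only covers points with $t\geq 0$, while the slice $\Sigma_{*}=\{T=T_0\}$ is $\{t=T_0-2M-2M\log\rho\}$ and so lies in $\{t\geq 0\}$ only for $\rho\leq e^{(T_0-2M)/2M}$; you must either shrink $\epsilon'$ accordingly, so that $\Sigma_{*}\cap\{0<\rho\leq\epsilon'\}$ sits inside an open set on which the forward argument gives $u\equiv 0$, or also run the time-reversed finite-speed argument. (The forward vanishing itself is a one-liner, which is what the paper uses: $\tau$ is non-decreasing along future-directed causal curves and $\tau\geq 2M+\epsilon+2M\log\epsilon$ on the support of the data, so $u\equiv 0$ on $\{t\geq 0,\ \tau< 2M+\epsilon+2M\log\epsilon\}$; no ray-tracing is needed.) Second, to invoke standard Cauchy theory on the extension $\{\rho>-\delta\}$ you must check that a two-sided neighborhood of $\{\rho=0,\ T_0\leq T\leq T_1\}$ lies in the future domain of dependence of $\Sigma_{*}$, i.e.\ that past-directed causal curves starting at $\rho\in(-\delta,0)$ cannot exit through $\{\rho=-\delta\}$ before meeting $\Sigma_{*}$. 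This is true --- for $\rho<0$ the gradient $\grad\rho$ is timelike and future-directed, so $\rho$ is non-decreasing toward the past --- but it is exactly the causal-structure input that makes the reduction legitimate and should be stated. Finally, "recovering $\pd[\rho]$-derivatives from the equation" degenerates at $\rho=0$ (the coefficient of $\pd[\rho]^{2}$ in $\Box_{S}$ vanishes there); the clean fix is to commute with $\pd[\rho]$ and absorb the commutator, exactly as in the paper's list of commutators, or simply to cite interior regularity for the extended strictly hyperbolic Cauchy problem.
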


\begin{figure}[htp]
  \centering
  \begin{tikzpicture}
    \coordinate (bl) at (0,0);
    \coordinate (top) at (6,6);
    \coordinate (br) at (12,0);
    \coordinate (mtau0) at (1,1);
    \coordinate (mtau0-0) at (2,0);
    \coordinate (ptau0) at (5,5);
    \coordinate (ptau0-0) at (10,0);
    \coordinate (control) at (6,3.5);
    \coordinate (mid) at (6,1);
    
    \draw [-] (bl) -- (top) node [pos = 0.5, anchor = south east]{$E_{1}^{+}$};
    \draw [-] (top) -- (br) node [pos = 0.5, anchor = south west]{$S_{1}^{+}$};
    \draw [-] (br) -- (bl) node [pos = 0.5, anchor = north] {$t=0$};
    \draw [-] (mtau0) -- (mtau0-0) node [pos = 0.5, anchor = west]{$\tau = -\tau_{0}$};
    \draw [-] (br) .. controls (control) ..(bl) node[pos =
    0.65, anchor = south]{$t=t_{0}$};
    \draw [-] (ptau0) -- (ptau0-0) node [pos=0.2, anchor = east]
    {$\tau = \tau_{0}$};
    
    \node [anchor=west] at (mid){$\mathbf{\Omega_{t_{0}}}$};
  \end{tikzpicture}
  \caption{The domain $\Omega_{t_0}$.}
  \label{fig:omega_t_0}
\end{figure}

\begin{proof}
  By finite speed of propagation, $u\equiv 0$ for $\tau \leq 2M +
  \epsilon + 2M\log \epsilon$.  We now fix $\tau_{0} > - (2M +
  \epsilon + 2M\log\epsilon)$ large.  Let $\Omega_{t_{0}}$ be the
  domain bounded by $\{ t = 0\}$, $\{ t = t_{0}\}$, $\{ \tau =
  \tau_{0}\}$, and $\{ \tau = -\tau _{0}\}$ (pictured in
  Figure~\ref{fig:omega_t_0}).  Here $\{ t=0\}$, $\{ t=t_{0}\}$ are
  space-like with defining function $t$ and $\{ \tau = \pm \tau
  _{0}\}$ are null with defining function $\tau$.  Moreover, $\langle
  \grad T , \grad t\rangle_{g_{S}} < 0$ and $\langle \grad T, \grad
  \tau \rangle _{g_{S}} < 0$ imply that
  \begin{equation*}
    \langle \mathcal{F}(T,v), \grad t \rangle_{g_{S}} \geq 0, \
    \langle \mathcal{F}(T,v), \grad \tau \rangle_{g_{S}} \geq 0.
  \end{equation*}
  Let $\Sigma_{s} = \{ T=s\}$ with defining function $T$ and
  $\Omega_{t_{0}}^{s} = \Omega_{t_{0}} \cap \{ T\leq s\}$.  Define now
  \begin{equation*}
    M^{N}(u,s) = \left( \sum _{ |I| \leq N} \int _{\Sigma_{s}\cap
        \Omega_{t_{0}}} e^{-cT}\langle
      \mathcal{F}\left(T,Z^{I}u\right), \grad T\rangle_{g_{S}} \dmu_{T}\right)^{1/2}
  \end{equation*}
  where $Z \in \{ \pd[\rho], \pd[\tau], Z_{ij}\}$ and $d\mu_{T}\wedge
  dT = dV_{g_{S}}$.  Note that each term in sum above is positive, so
  $M^{N}$ controls the first $N+1$ derivatives of $u$.  Choose $c$
  large enough so that
  \begin{align*}
   & \operatorname{div}\left( e^{-cT} \mathcal{F}(T,Z^{I}u)\right) \\
   &\quad\quad = e^{-cT}\left( -c \langle \mathcal{F}(T, Z^{I}u) + \langle \grad T,
    \grad Z^{I}u\rangle (\Box_{g_{S}}-1)Z^{I}u + Q_{g_{S}}(T,
    Z^{I}u)\right)\\
   &\quad\quad \leq 0
  \end{align*}
  for all $|I| \leq N$.  Here $c$ depends on $\tau_{0}$ and $N$.  By
  Stokes' theorem, we then have that
  \begin{equation*}
    \left(M^{N}(u,s)\right)^{2} \leq \sum_{|I| \leq N}\int_{\{t=0\}
      \cap \Omega_{t_{0}}} e^{-cT}\langle \mathcal{F}(T,Z^{I}u), \grad T\rangle \dmu_{t}.
  \end{equation*}
  Because $T$ is bounded on $\Omega_{\infty}$ and $\{ t=0\} \cap
  \Omega_{t_{0}}$ is independent of $t_{0}$, we have that
  \begin{equation*}
    \sum_{|I|\leq N}\int_{\Omega_{t_{0}}} e^{-cT} \langle
    \mathcal{F}(T,Z^{I}u), \grad T\rangle _{g_{S}}\dV_{g_{S}} =
    \int_{-\infty}^{\infty}\left( M^{N}(u,s)\right)^{2} \ds \leq
    C< \infty,
  \end{equation*}
  where $C$ depends only on $N$ and $\tau_{0}$.  Letting
  $t_{0}\to \infty$, we obtain
  \begin{equation*}
    \sum_{|I|\leq N}\int_{\Omega_{\infty}}e^{-cT}\langle \mathcal{F}(T, Z^{I}u), \grad
    T\rangle_{g_{S}}\dV_{g_{S}} \leq C_{N,\tau_{0}} .
  \end{equation*}
  Because $N$ and $\tau_{0}$ are arbitrary, $u$ is smooth up to $\{
  \rho = 0\}$ for all $\tau \in (-\infty, \infty)$.
\end{proof}

We now consider non-compactly supported data.  On the partial
compactification depicted in Figure~\ref{fig:partial-comp}, we use
coordinates $a$ and $b$, defined as follows:
\begin{equation*}
  a = \rho e^{-\frac{\tau}{2M}} = e^{-\frac{t+r}{2M}}\in [0,e^{-1}],\
  b = e^{\frac{\tau}{4M}} = e^{\frac{t+r}{4M}}\sqrt{r-2M}.
\end{equation*}
These are valid in a neighborhood of $E_0$, especially near the intersection of $E_{0}$ and
$E_{1}^{+}$, where $a$ is a defining function for $E_{1}^{+}$ and $b$
is a defining function for $E_{0}$.  (The function $r-2M = ab^{2}$
vanishes on both $E_{0}$ and $E_{1}^{+}$.) Near the interior
of $E_{1}^{+}$, $(a,b)$ are equivalent to the coordinates $(\tau,\rho)$ as
above. Near the initial surface $t=0$ and $r$ finite, $(a, b)$ are equivalent to the coordinates
$$(\beta,t)=(\sqrt{r-2M},t).$$
In coordinates $(a,b)$, the conformal metric $\tilde{g}_{e} =
(2b\sqrt{M})^{-2}g_{S}$ and its wave operator are:
\begin{align*}
  &\tilde{g}_{e} = 2\da\frac{\,db}{b} + 2a\left( 1 + \frac{ab^{2}}{2M +
      ab^{2}}\right)\left( \frac{\,db}{b}\right)^{2} + \frac{(2M +
    ab^{2})^{2}}{4Mb^{2}}\domega^{2}, \\
  &\tilde{\Box}_{e} = 2\pd[a]\left( b\pd[b] - a\pd[a] - 1\right) +
  \frac{4Mb^{2}}{(2M+ab^{2})^{2}}\lap_{\omega} -
  \frac{2b^{2}}{2M+ab^{2}}(a\pd[a])^{2} \\
  &\quad\quad\  - \frac{4b^{2}(M +
    ab^{2})}{(2M+ab^{2})^{2}}a\pd[a] + \frac{2b^{2}}{2M+ab^{2}}b\pd[b].
\end{align*}
We thus have that
\begin{align*}
  \Box_{g_{s}}u = 0 &\Longleftrightarrow \left( \tilde{\Box}_{e} +
    \gamma_{e}\right) \tilde{u} = 0, \\
  \tilde{u} = 2 b\sqrt{M} u , \ &\gamma_{e} = -b^{-1}\tilde{\Box}_{e}b = -
  \frac{2b^{2}}{2M + ab^{2}}.
\end{align*}
Moreover,
\begin{equation}
  \label{eq:commutators-with-e}
  \left[\tilde{\Box}_{e}, Z_{ij}\right] = 0, \ \left[
    \tilde{\Box}_{e},b\pd[b]\right] = b^{2}\sum_{|I|\leq
    2}c_{I}Z^{I},\ \left[ \tilde{\Box}_{e}, a\pd[a]\right] =
  \tilde{\Box}_{e} + b^{2}\sum_{|I|\leq 2}c_{I}'Z^{I},
\end{equation}
where $Z\in \{ a\pd[a], b\pd[b], Z_{ij}\}$and $c_{I},c_{I}'$ are
smooth coefficients.  In the above and what follows, we use $I$ as a
multi-index.  In coordinates $(a,b,\omega)$, we choose time-like
functions as follows:
\begin{align*}
  &T_{1} = -a + \log b,& &\langle \grad T_{1}, \grad
  T_{1}\rangle_{\tilde{g}_{e}} = -2 -2a\left( 1 + \frac{ab^{2}}{2M +
      ab^{2}}\right) < 0, \\
  &T_{1}' = -a,& &\langle \grad T_{1}' , \grad
  T_{1}'\rangle_{\tilde{g}_{e}} = -2a\left( 1 + \frac{ab^{2}}{2M +
      ab^{2}}\right) < 0.
\end{align*}
Here $T_{1}'$ is asymptotically null when approaching the event
horizon.  Moreover,  we again compute
\begin{align}
  \label{eq:calcs-near-event-horiz}
  &\langle \mathcal{F}_{\tilde{g}_{e}}(T_{1}, v), \grad T_{1}'\rangle_{\tilde{g}_{e}}  \\
  &\quad\quad = \left( 1 + \frac{ab^{2}}{2M+ab^{2}}\right)a\left| \pd[a]v\right|^{2}+\frac{1}{2} \left| b\pd[b]v\right|^{2} \notag\\
  &\quad\quad\quad+ \frac{1}{2} 
     \left|2\left( 1 + \frac{ab^{2}}{2M+ab^{2}}\right)a\pd[a]v -b\pd[b]v\right|^{2}  \notag\\
  &\quad\quad\quad
     + \frac{1}{2}\left( 1 +
    2a\left( 1 + \frac{ab^{2}}{2M + ab^{2}}\right)\right)\left(
    \frac{4M\left|b\grad_{\omega}v\right|^{2}}{(2M+ab^{2})^{2}}+v^{2} \right) , \notag
\\
  &\langle \mathcal{F}_{\tilde{g}_{e}}(T_{1},v), \grad \log  b\rangle_{\tilde{g}_{e}}\notag \\
  &\quad\quad  =\left|\pd[a] v\right|^{2}+ \left( 1 + \frac{ab^{2}}{2M+ ab^{2}}\right)a\left|\pd[a] v\right|^{2}  
      +\frac{1}{2}\left( \frac{4M\left|b\grad_{\omega}v\right|^{2}}{(2M+ab^{2})^{2}} + v^{2}\right), \notag\\
 &\mathcal{Q}_{\tilde{g}_{e}} (T_{1},v) \notag\\
 &\quad\quad
    = \left| \pd[a]v\right|^{2}-\left( 2 + \frac{2b^{2}}{2M+ab^{2}}\right)\left( \pd[a]v\right)\left(b\pd[b]v\right) 
   \notag\\
  &\quad\quad\quad
  +\left(4 +\frac{8Mb^{2}}{(2M+ab^{2})^{2}}\right)a\left| \pd[a]v\right|^{2} 
   + \Theta_{2}\left(a\pd[a]v,b\pd[b]v,b\nabla_{\omega}v,v\right) \notag,
\end{align}
where $\Theta_{2}(v^{1},\ldots,v^{l})$ is a quadratic form of
$(v^{1},\ldots,v^{l})$ with smooth coefficients.

\begin{proposition}
  \label{prop:e2}
  Suppose that $ (\phi, \psi)$ are in the following weighted Sobolev spaces
  \begin{equation*}
   \beta^{\lambda-1}H^{1,N}_{0,b}\left( [0,\beta_{0})_{\beta}\times
      \sphere^{2}, \frac{\differential{\beta}\domega}{\beta^{3}}\right) \times \beta^{\lambda
      -1}H^{1,N-1}_{0,b}\left( [0,\beta_{0})_{\beta} \times \sphere^{2}, \frac{\differential{\beta}\domega}{\beta^{3}}\right)
  \end{equation*}
  for some $\beta_0>0$, $N> 2$ and $\lambda > 0$.  Then $\tilde{u} = 2b\sqrt{M}u$ is $C^{\delta}$ up
  to $\{ a=0\}$ for $b < \beta_{0}e^{\frac{\beta_0^2+2M}{4M}}$, where $\delta = \min \{ \lambda, \frac{1}{2}\}$.
\end{proposition}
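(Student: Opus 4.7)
The plan is to run a weighted energy estimate for the rescaled equation $(\tilde{\Box}_{e} + \gamma_{e})\tilde{u} = 0$ in coordinates $(a,b,\omega)$, using the time-like function $T_{1} = -a + \log b$ from equation~\eqref{eq:calcs-near-event-horiz}, and then to read off H\"older regularity from a mixed $0,b$-Sobolev embedding. First I would differentiate the equation by products $Z^{I} = Z_{i_{1}}\cdots Z_{i_{k}}$ of length $k \leq N$, where each $Z_{i_{j}} \in \{a\pd[a],\, b\pd[b],\, Z_{ij}\}$. The commutator identities in equation~\eqref{eq:commutators-with-e} show that $[\tilde{\Box}_{e}, Z_{i_{j}}]$ is either zero, a smooth combination of such vector fields applied once with coefficients $O(b^{2})$, or a copy of $\tilde{\Box}_{e}$ plus lower-order terms. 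Hence each $Z^{I}\tilde{u}$ satisfies an inhomogeneous equation whose right-hand side is controlled by lower-order derivatives of $\tilde{u}$ with a gain of $b^{2}$, and the potential $\gamma_{e} = O(b^{2})$ is likewise small for $b$ near $0$.

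Next I would insert the weight $e^{-cT_{1}}b^{-2\mu}$, choosing $\mu$ so that the initial $\beta^{\lambda-1}H^{1,N}_{0,b}$-norm of $(\phi,\psi)$ is equivalent to the weighted initial energy of $\tilde{u}$ on $\{t=0\}$; this identification uses that $b$ and $\beta = \sqrt{r-2M}$ are comparable on the initial surface near $r=2M$, together with $\tilde{u} = 2b\sqrt{M}u$ gaining one factor of $b$ relative to $u$. Applying Stokes' theorem as in Proposition~\ref{prop:e1} to the current $e^{-cT_{1}}b^{-2\mu}\mathcal{F}_{\tilde{g}_{e}}(T_{1}, Z^{I}\tilde{u})$ on the region $\{t\geq 0,\, a\geq 0,\, b \leq \beta_{0}e^{(\beta_{0}^{2}+2M)/(4M)},\, T_{1}\leq s\}$, the first two positivity identities in equation~\eqref{eq:calcs-near-event-horiz} imply that the flux through $\{T_{1}=s\}$ controls every first derivative of $Z^{I}\tilde{u}$, including the normal $\pd[a]$-derivative, in a weighted $L^{2}$ sense up to $\{a=0\}$. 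Iterating over $|I|\leq N$ yields a uniform bound on $\tilde{u}$ in a weighted $H^{1,N}_{0,b}$-norm on this region.

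Since $N > 2$, this mixed $0,b$-Sobolev control implies that $\tilde{u}$ is $C^{\delta}$ up to $\{a=0\}$: the exponent is $1/2$ in the $a$-direction, obtained from $\pd[a]\tilde{u} \in L^{2}$ via the estimate $|\tilde{u}(a,b,\omega) - \tilde{u}(0,b,\omega)| \lesssim a^{1/2}$ after Sobolev embedding in the tangential $(b,\omega)$ variables; and the exponent is $\lambda$ in the $b$-direction, inherited from the decay of the initial data through the weight $b^{\mu}$ on $\tilde{u}|_{a=0}$. Taking the minimum gives $\delta = \min\{\lambda, 1/2\}$.

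The main obstacle is the sign of the divergence of the weighted current. The factor $b^{-2\mu}$ produces an indefinite bulk term proportional to $-2\mu b^{-2\mu}\langle \grad \log b,\, \mathcal{F}_{\tilde{g}_{e}}(T_{1}, Z^{I}\tilde{u})\rangle_{\tilde{g}_{e}}$, which by the second identity in equation~\eqref{eq:calcs-near-event-horiz} is majorized by the flux density itself. This term, together with the $b^{2}$-losses from the commutators and from $\gamma_{e}$, must be absorbed by the positive portion of $\mathcal{Q}_{\tilde{g}_{e}}(T_{1}, Z^{I}\tilde{u})$ and by the exponential weight $e^{-cT_{1}}$. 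Choosing $c$ large enough (depending on $\mu$, $N$, and $\beta_{0}$) makes the total divergence non-positive, closing the energy inequality exactly as in Proposition~\ref{prop:e1}.
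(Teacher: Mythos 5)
Your framework --- commuting with $Z^{I}$, applying Stokes' theorem to a weighted current built from $\mathcal{F}_{\tilde{g}_{e}}(T_{1},\cdot)$, and finishing with Sobolev embedding --- is the same as the paper's, but the pivotal claim that ``choosing $c$ large enough makes the total divergence non-positive'' has a genuine gap, and it sits exactly where the difficulty of the proposition lies. First, the bulk term you flag as indefinite, $-2\mu\, b^{-2\mu}\langle \mathcal{F}_{\tilde{g}_{e}}(T_{1},v),\grad\log b\rangle_{\tilde{g}_{e}}$, is in fact nonpositive by the second identity in \eqref{eq:calcs-near-event-horiz}, so it helps; the dangerous term is the $+\left|\pd[a]v\right|^{2}$ (together with the cross term $(\pd[a]v)(b\pd[b]v)$) inside $\mathcal{Q}_{\tilde{g}_{e}}(T_{1},v)$. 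Cancelling $+\left|\pd[a]v\right|^{2}$ requires a \emph{nondegenerate} negative multiple of $\left|\pd[a]v\right|^{2}$ in the bulk, and the only sources are the weight $b^{-2\mu}$ (via $-2\mu\langle\mathcal{F},\grad\log b\rangle$) and the factor $e^{-cT_{1}}=e^{ca}b^{-c}$. But on $\{t=0\}$ one has $b\sim\beta$ with $a$ bounded away from zero, so finiteness of the initial flux forces $2\mu+c\le 2\lambda$: taking $c$ large in $e^{-cT_{1}}$ silently demands $c/2$ extra orders of vanishing of the data at $E_{0}$, which the hypothesis does not supply. With the only admissible weight, $e^{-cT_{1}'}b^{-2\lambda}$ with $T_{1}'=-a$ (as in the paper), the net coefficient of $\left|\pd[a]v\right|^{2}$ is $1-2\lambda$, which is positive when $\lambda<\tfrac12$ and cannot be absorbed, since $-c\langle\mathcal{F},\grad T_{1}'\rangle$ only supplies $-c\,a\left|\pd[a]v\right|^{2}$, degenerating at $a=0$. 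Note also that if your argument closed as written it would yield $C^{1/2}$ transversally for every $\lambda>0$, strictly stronger than the stated $\min\{\lambda,\tfrac12\}$ --- a sign that something must fail.

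What the paper does instead is foliate by $\Sigma_{s}=\{T_{1}'=s\}=\{a=-s\}$, whose flux $\langle\mathcal{F}(T_{1},\cdot),\grad T_{1}'\rangle$ controls only the degenerate quantity $a\left|\pd[a]Z^{I}\tilde{u}\right|^{2}$, accept the positive bulk contribution $(1-2\lambda)\left|\pd[a]Z^{I}\tilde{u}\right|^{2}\le (1-2\lambda)a^{-1}\cdot a\left|\pd[a]Z^{I}\tilde{u}\right|^{2}$, and run a Gronwall argument in $s$: the energy $M_{1}^{N}(\tilde{u},s;\lambda)$ grows like $(-s)^{\lambda-1/2}$ as $s\to 0^{-}$ when $\lambda<\tfrac12$ and stays bounded when $\lambda\ge\tfrac12$. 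Sobolev embedding on the leaves then gives $\left|\pd[a]\tilde{u}\right|\lesssim a^{\lambda-1}$ (respectively $a^{-1/2}$), and integrating in $a$ yields the H\"older exponent $\min\{\lambda,\tfrac12\}$ \emph{transverse to the event horizon}, i.e.\ in the $a$-variable --- not, as you assert, ``in the $b$-direction.'' Your proposal is missing this quantitative blow-up analysis, which is the entire content of the exponent $\delta$.
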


\begin{proof}
  The assumption on $\phi$ and $\psi$ implies that on the Cauchy surface 
  \begin{equation*}
   (Z^I\tilde{u})|_{t=0} \in \beta^{\lambda} H^{1,N-|I|}_{0,b}\left( [0,\beta_{0})\times
      \sphere^{2}, \frac{\differential{\beta}\domega}{\beta^{3}}\right)
  \end{equation*}
 for $Z\in \{a\partial_a, b\partial_b, Z^{ij}\}$.
  
  Let $\Sigma_{s} = \{ T_{1}'=s\}$, so that $\Sigma_{s}$ is space-like
  with defining function $T_{1}'$ for $s < 0$.  As $s\to 0$,
  $\Sigma_{s}$ approaches a characteristic surface.  Let $\Omega$ be
  the domain bounded by $\Sigma_{0}$, $\{t=0\}$, $\{b=0\}$ and $S = \{
  T_{1} = \log b_{0}\}$.  We then define
  \begin{align*}
   &M^{N}_{1}\left( \tilde{u},s;\lambda\right) = \left( \sum_{|I|\leq
      N}\int_{\Sigma_{s}\cap \Omega}b^{-2\lambda}e^{-cT_{1}'}\langle
    \mathcal{F}_{\tilde{g}_{e}}(T_{1},Z^{I}\tilde{u}), \grad
    T_{1}'\rangle_{\tilde{g}_{e}}\dmu_{T_{1}'}\right)^{\frac{1}{2}}
  ,\\
  &L^{N}_{1} \left( \tilde{u},s;\lambda\right) = \left( \sum_{|I|\leq
      N}\int_{S\cap
      \{T_{1}'<s\}\cap\Omega}b^{-2\lambda}e^{-cT_{1}'}\langle
    \mathcal{F}_{\tilde{g}_{e}}(T_{1},Z^{I}\tilde{u}),\grad
    T_{1}\rangle_{\tilde{g}_{e}}\dmu_{T_{1}}\right)^{\frac{1}{2}}, 
  \end{align*}
  where $Z\in \{ a\pd[a],b\pd[b],Z_{ij}\}$ and $\dmu_{T_{1}'}\wedge
  \dT_{1}' = \dV_{\tilde{g}_{e}}$, $\dmu_{T_{1}} \wedge \dT_{1} =
  \dV_{\tilde{g}_{e}}$.

  We first choose $s_{0}$ close to $0$ so that $\Sigma_{s_{0}}\cap
  \{t=0\} \cap \Omega=\emptyset$.  Let $\Omega_{s_{0}} = \Omega\cap
  \{s\leq s_{0}\}$ (illustrated in Figure~\ref{fig:omega-s-0}) so that
  $\Omega_{s_{0}}$ is bounded by $\Sigma_{s_{0}}$, $\{ t= 0\}$,
  $\{b=0\}$, and $S$.  Choose $c$ large enough so that in
  $\Omega_{s_{0}}$ we have
  \begin{align*}
    &\sum_{|I|\leq N} \operatorname{div}_{\tilde{g}_{e}}\left(
      b^{-2\lambda}e^{-cT'}\mathcal{F}_{\tilde{g}_{e}}(T_{1},Z^{I}\tilde{u})\right) \\
   &\quad\quad= b^{-2\lambda}e^{ - cT_{1}'}\sum_{|I|\leq N} \bigg(
    -2\lambda \langle
    \mathcal{F}_{\tilde{g}_{e}}(T_{1},Z^{I}\tilde{u}), \grad \log
    b\rangle_{\tilde{g}_{e}}  + \mathcal{Q}_{\tilde{g}_{e}}(T_{1},
    Z^{I}\tilde{u}) \\
   &\quad\quad\quad - c\langle \mathcal{F}_{\tilde{g}_{e}}
    (T_{1}, Z^{I}\tilde{u}), \grad T_{1}'\rangle _{\tilde{g}_{e}}+ \langle \grad T_{1} , \grad
    Z^{I}\tilde{u}\rangle_{\tilde{g}_{e}} \left( \tilde{\Box}_{e}
      -1\right)Z^{I}\tilde{u} \bigg) \leq 0.
  \end{align*}
  Here $c$ depends only on $b_{0},s_{0}$, and $N$, and the dependence
  on $s_{0}$ is required to bound $|\pd[a]Z^{I}\tilde{u}|^{2}$ in
  terms of $a|\pd[a]Z^{I}\tilde{u}|^{2}$.  In order
  to bound the term of the form
  \begin{equation*}
    \langle \grad T_{1}, \grad
    Z^{I}\tilde{u}\rangle_{\tilde{g}_{e}}\left(
      \tilde{\Box}_{e}-1\right)Z^{I}\tilde{u}, 
  \end{equation*}
  we have used that $\tilde{u}$ solves $\left( \tilde{\Box}_{e} +
    \gamma_{e}\right) \tilde{u} = 0$ and the
  expressions~(\ref{eq:commutators-with-e}) for the commutators of
  $\tilde{\Box}$ with $Z^{I}$.  By Stokes' theorem, we then have
  \begin{align*}
    &\left( M_{1}^{N}(\tilde{u},s_{0})\right)^{2} + \left(
      L_{1}^{N}(\tilde{u},s_{0})\right)^{2} \\
    &\quad\quad \leq \sum_{|I|\leq N}
               \int _{\{t=0\} \cap \Omega} b^{-2\lambda}e^{ - cT_{1}'}\langle
    \mathcal{F}_{\tilde{g}_{e}}(T_{1}Z^{I}\tilde{u}), \grad t\rangle _{\tilde{g}_{e}}\dmu_{t},
  \end{align*}
  where $\dmu_{t} \wedge \dt = \dV_{\tilde{g}_{e}}$.  Observe that the
  right hand side is equivalent to the square of the initial data
  norm.

  \begin{figure}[htp]
    \centering
    \begin{tikzpicture}
      \coordinate (ll) at (0,0);
      \coordinate (lr) at (6,0);
      \coordinate (ml) at (0,2); 
      \coordinate (mr) at (6,2);
      \coordinate (tl) at (2,4);
      \coordinate (tr) at (4,4);

      \coordinate (St) at (1,3);
      \coordinate (Sb) at (5,0);
      \coordinate (Scont) at (3,2);
      \coordinate (Sigmal) at (0,1);
      \coordinate (Sigmar) at (4,1);
      \coordinate (Sigmacont) at (2,1.25);

      \coordinate (omegatext) at (2,0.5);

      \draw [-] (St) .. controls (Scont) .. (Sb) node [pos = 0.5, anchor
      = south west]{$S$};

      \draw[-] (Sigmal) .. controls (Sigmacont) .. (Sigmar) node
      [pos=0.5, anchor = south]{$\Sigma_{s_{0}}$};

      \draw [-] (ll) node[anchor=north]{$r=2M$} -- (lr)
      node[pos = 0.5, anchor=north]{$t=0$};
      \draw [-] (ll) -- (ml) node[pos=0.5,anchor=east]{$E_{0}$};
      \draw [-] (ml) -- (tl) node[pos=0.5, anchor = south
      east]{$E_{1}^{+}$};

      \node at (omegatext){$\Omega_{s_{0}}$};
    \end{tikzpicture}
    \caption{The domain $\Omega_{s_0}$}
    \label{fig:omega-s-0}
  \end{figure}
 
  For $s > s_{0}$, we now let $\Omega_{s_{0}}^{s}$ denote the domain
  bounded by $\Sigma_{s_{0}}$, $\Sigma_{s}$, $\{ b= 0\}$, and $S$.
  Again Stokes' theorem implies that
  \begin{align*}
    &\left( M_{1}^{N}(\tilde{u},s)\right)^{2} - \left(
      M_{1}^{N}(\tilde{u},s_{0})\right)^{2} + \left(
      L_{1}^{N}(\tilde{u},s)\right)^{2} - \left(
      L_{1}^{N}(\tilde{u},s_{0})\right)^{2} \\
   &\quad\quad = \sum_{|I|\leq N}
    \int_{\Omega_{s_{0}}^{s}}\operatorname{div}\left( b^{-2\lambda}
    e^{ - cT_{1}'}\mathcal{F}_{\tilde{g}_{e}}(T_{1}, Z^{I}\tilde{u})\right)\dV_{\tilde{g}_{e}}.
  \end{align*}
  Dividing by $s-s_{0}$, taking a limit, and then using the
  expressions~\eqref{eq:calcs-near-event-horiz} yields that
  \begin{align*}
    &\pd[s]\left( M_{1}^{N}(\tilde{u},s)\right)^{2} + \pd[s]\left(
      L_{1}^{N}(\tilde{u},s)\right)^{2} \\
    &\quad\quad \leq
    \begin{cases}
      \left( (1-2\lambda)a^{-1} + Ca^{-\frac{1}{2}}\right)\left(
        M_{1}^{N}(\tilde{u},s)\right)^{2} & \lambda < \frac{1}{2} \\
      C a^{-\frac{1}{2}}\left( M_{1}^{N}(\tilde{u},s)\right)^{2} &
      \lambda \geq \frac{1}{2}
    \end{cases},
  \end{align*}
  with $C$ a constant depending only on $b_{0}$ and $N$.  (Note that
  the power $a^{-1/2}$ arises from the need to estimate the terms of
  the form $(\pd[a]Z^{I}\tilde{u})(b\pd[b]Z^{I}\tilde{u})$.)
  Integrating in $s$, we find that for all $s > s_{0}$,
  \begin{equation*}
    M_{1}^{N}(\tilde{u},s) \leq
    \begin{cases}
      C' (-s)^{\lambda -
        \frac{1}{2}}(-s_{0})^{\frac{1}{2}-\lambda}M_{1}^{N}(\tilde{u},s_{0})
      & \lambda < \frac{1}{2} \\
      C' M_{1}^{N}(\tilde{u},s_{0}) & \lambda \geq \frac{1}{2}
    \end{cases},
  \end{equation*}
  where $C' = \exp \left( \frac{1}{2}
    \int_{0}^{e^{-1}}Ca^{-1/2}\da\right)$.  Because $N > 2$, the Sobolev embedding
  theorem implies that for $a < -s_{0}$, we have
  \begin{equation*}
    \left| \pd[a] \tilde{u}\right| \leq
    \begin{cases}
      C'' M_{1}^{N}(\tilde{u},s_{0})a^{\lambda -1} & \lambda <
      \frac{1}{2} \\
      C'' M_{1}^{N}(\tilde{u},s_{0})a^{-\frac{1}{2}} & \lambda \geq \frac{1}{2}
    \end{cases},
  \end{equation*}
  where $C''$ only depends on $b_{0}$.  Integrating in $a$ then finishes the proof.
\end{proof}

\begin{corollary}
  \label{cor:e1}
  Suppose $\lambda = k + \alpha$ with $\alpha \in (0,1]$ and $k\in
  \naturals_{0}$.  If the pair $(\phi, \psi)$ lies in the following space
  \begin{equation*}
     \beta^{\lambda - 1}H^{1,N}_{0,b}\left(
      [0,\beta_{0})\times \sphere^{2} ,
      \frac{\differential{\beta}\domega}{\beta^{3}}\right) \times
    \beta^{\lambda-1}H^{1,N-1}_{0,b}\left( [0,\beta_{0})\times \sphere^{2}, \frac{\differential{\beta}\domega}{\beta^{3}}\right)
  \end{equation*}
  with $N > 2 + k$, then $\tilde{u}$ is $C^{k,\delta}$ up to $\{a=0\}$
  for $b < \beta_{0}e^{\frac{\beta_0^2+2M}{4M}}$, where $\delta = \min \{ \alpha , \frac{1}{2}\}$.
\end{corollary}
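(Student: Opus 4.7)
I plan to prove the corollary by induction on $k$, with the base case $k=0$ being precisely Proposition~\ref{prop:e2}. For the inductive step, my strategy is to apply the energy estimate at the heart of Proposition~\ref{prop:e2} to the commutators $Z^I\tilde u$ for $Z\in\{a\pd[a],b\pd[b],Z_{ij}\}$ with $|I|\le k$, and then to convert the resulting tangential control into classical $C^{k,\delta}$ regularity via the wave equation itself.

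On the initial surface $\{t=0\}$ near $r=2M$, the coordinate $a$ is smooth and bounded away from zero while $b$ is comparable to $\beta=\sqrt{r-2M}$, so each $Z$ restricts to a smooth combination of $\beta\pd[\beta]$ and the rotation fields. Consequently $Z^I(\tilde\phi,\tilde\psi)$ lies in the same weighted Sobolev class as $(\tilde\phi,\tilde\psi)$, but with $|I|$ fewer tangential derivatives, so the hypothesis $N>2+k$ supplies enough regularity to close the argument. Using the commutator identities~\eqref{eq:commutators-with-e}, each $Z^I\tilde u$ satisfies
\begin{equation*}
  (\tilde\Box_e+\gamma_e)Z^I\tilde u = F_I,
\end{equation*}
where $F_I$ is a smooth combination of lower-order $Z^J\tilde u$ multiplied by favorable factors of $b^2$. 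Running the weighted energy estimate of Proposition~\ref{prop:e2} inductively in $|I|$ and absorbing the $F_I$'s via the same Grönwall step then yields $Z^I\tilde u\in C^\delta$ up to $\{a=0\}$ for $|I|\le k$, with $\delta=\min\{\alpha,1/2\}$.

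To pass from tangential $Z$-regularity to classical $C^{k,\delta}$ regularity, I would rewrite the equation as
\begin{equation*}
  2\pd[a](b\pd[b]-a\pd[a]-1)\tilde u = -b^2(\text{tangential operator})\tilde u - \gamma_e\tilde u,
\end{equation*}
whose right-hand side is entirely tangential and therefore controlled by the previous step. Integrating in $a$ from the boundary gives control of $\pd[a]\tilde u$, and iterating by differentiating the equation further in $a\pd[a]$ produces $\pd[a]^j\tilde u$ for $j\le k$ as $a$-integrals of tangentially bounded quantities, completing the Hölder bound.

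The main obstacle I anticipate is the bookkeeping of Hölder exponents through the induction: Proposition~\ref{prop:e2} carries an inherent $1/2$ bottleneck arising from the cross-term $(\pd[a]Z^I\tilde u)(b\pd[b]Z^I\tilde u)$, which is why the Hölder exponent saturates at $1/2$ and not at $\alpha$ when $\alpha>1/2$. To avoid this bottleneck compounding at each level of the commutation (which would degrade $\delta$ cumulatively), one must run the energy estimate for $Z^I\tilde u$ at the shifted weight $\lambda-|I|$ rather than always at $\lambda$, using the inclusion $\beta^{\lambda-1}H^{1,N-|I|}_{0,b}\subset\beta^{\lambda-|I|-1}H^{1,N-|I|}_{0,b}$ permitted by $\lambda-|I|\ge\alpha>0$ for $|I|\le k$. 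This shift ensures the $1/2$-loss occurs only at the final step and produces the advertised exponent $\delta=\min\{\alpha,1/2\}$.
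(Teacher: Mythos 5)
Your base case and the idea of commuting vector fields through the equation are fine, but the step that converts tangential control into classical $C^{k,\delta}$ regularity has a genuine gap, and it is precisely the step the paper's proof is designed to handle. The hypersurface $\{a=0\}$ is characteristic: the principal part of $\tilde{\Box}_{e}$ in the normal direction is $2\pd[a]\circ(b\pd[b]-a\pd[a]-1)$, i.e.\ $\pd[a]$ composed with a \emph{tangential} operator, not $\pd[a]^{2}$. So writing $2\pd[a](b\pd[b]-a\pd[a]-1)\tilde{u}=F$ with $F$ tangential and integrating in $a$ from the boundary only recovers the tangential quantity $(b\pd[b]-a\pd[a]-1)\tilde{u}$; it does not give $\pd[a]\tilde{u}$. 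Equivalently, $w=\pd[a]\tilde{u}$ satisfies the transport equation $(b\pd[b]-a\pd[a]-2)w=F/2$ along a vector field tangent to $\{a=0\}$ whose zeroth-order term has the wrong sign: propagating a bound from the Cauchy surface along the flow $(a,b)\mapsto(ae^{-t},be^{t})$ costs a factor $e^{2t}\sim a^{-2}$, far worse than the $a^{\lambda-1}$ (or $a^{-1/2}$) behavior actually needed, and the integral curves only reach $\{a=0\}$ through the corner $b\to\infty$. This is exactly why Proposition~\ref{prop:e2} controls $\pd[a]\tilde{u}$ through the weighted energy flux (whose integrand contains $a|\pd[a]v|^{2}$, not $|\pd[a]v|^{2}$) rather than through the equation.

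The paper's proof of the corollary therefore commutes the \emph{transversal} derivatives $\pd[a]^{i}$, $i\leq k$, directly into the energy estimate, using
\begin{equation*}
  \left[\tilde{\Box}_{e},\pd[a]^{k}\right]=2k\pd[a]^{k+1}+b^{2}\sum_{|I|+i\leq k+1,\ i\leq k}c_{I,i}Z^{I}\pd[a]^{i},
\end{equation*}
and the term $2k\pd[a]^{k+1}$ is what shifts the coefficient in the Gr\"onwall inequality from $(1-2\lambda)a^{-1}$ to $(1-2\alpha)a^{-1}$: each normal derivative costs one full power of the weight, which is where the decomposition $\lambda=k+\alpha$ with $\alpha>0$ enters. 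Your closing remark about running the estimate at shifted weight $\lambda-|I|$ attaches this loss to the tangential fields $Z\in\{a\pd[a],b\pd[b],Z_{ij}\}$, which in fact cost nothing (their commutators with $\tilde{\Box}_{e}$ carry favorable $b^{2}$ factors and Proposition~\ref{prop:e2} already handles arbitrarily many of them at the fixed weight $\lambda$); the loss belongs entirely to the $\pd[a]^{i}$. To repair your argument you would need to replace the ``integrate the equation in $a$'' step by the commuted energy estimate for $\pd[a]^{i}\tilde{u}$, $i\leq k$, tracking the weight degradation $\lambda\mapsto\lambda-i$, which is essentially the paper's proof.
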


\begin{proof}
  Notice that
  \begin{align*}
    &\left[ \tilde{\Box}_{e}, \pd[a]^{k}\right]  = 2k\pd[a]^{k+1} +
    b^{2} \sum_{|I|+i\leq k+1,  i \leq k}c_{I,i}Z^{I}\pd[a]^{i}, \\
    &\left[ a\pd[a], \pd[a]^{k}\right] = -k \pd[a]^{k} , \quad \left[ b\pd[b], \pd[a]^{k}\right] = \left[ Z_{ij},
      \pd[a]^{k}\right] = 0,
  \end{align*}
  where $Z\in \{ a\pd[a], b\pd[b], Z_{ij}\}$ and $c_{I,i}$ are smooth
  coefficients.  We set
  \begin{align*}
    &\widetilde{M}_{1}^{N} (\tilde{u}, s; \lambda) = \left(
      \sum_{i\leq k} \left( M_{1}^{N-i}(\pd[a]^{i}\tilde{u}, s;\lambda)\right)^{2}
    \right)^{\frac{1}{2}}, \\
    &\widetilde{L}_{1}^{N}(\tilde{u},s;\lambda) = \left(\sum_{i\leq
        k}\left( L_{1}^{N-i}(\pd[a]^{i}\tilde{u},s;\lambda)\right)^{2}
    \right)^{\frac{1}{2}} .
  \end{align*}
  According to the divergence formula in the proof of
  Proposition~\ref{prop:e2}, we have that
  \begin{align*}
    &\pd[s] \left( \widetilde{M}_{1}^{N}(\tilde{u},s)\right)^{2} +
    \pd[s]\left( \widetilde{L}_{1}^{N}(\tilde{u},s)\right)^{2} \\
   &\quad\quad\leq
    \begin{cases}
      \left( (1-2\alpha)a^{-1} + Ca^{-\frac{1}{2}}\right)\left(
        \widetilde{M}_{1}^{N}(\tilde{u},s)\right)^{2} & \alpha <
      \frac{1}{2} \\
      Ca^{-\frac{1}{2}}\left(
        \widetilde{M}_{1}^{N}(\tilde{u},s)\right)^{2} & \alpha \geq \frac{1}{2}
    \end{cases}.
  \end{align*}
  We thus obtain the following in a similar manner to the proof of
  Proposition~\ref{prop:e2}:
  \begin{align*}
   & \left| \pd[a]^{i}\tilde{u}\right|  \leq
    C'\widetilde{M}_{1}^{N}(\tilde{u},s_{0}), \  a < -s_{0}, i \leq  k ; \\
   & \left| \pd[a]^{k+1}\tilde{u}\right|  \leq
    \begin{cases}
      C' \widetilde{M}_{1}^{N}(\tilde{u},s_{0})a^{\alpha -1} & \alpha
      < \frac{1}{2} \\
      C' \widetilde{M}_{1}^{N}(\tilde{u},s_{0})a^{-\frac{1}{2}} &
      \alpha \geq \frac{1}{2} 
    \end{cases},
  \end{align*}
  which finishes the proof.  
\end{proof}

In particular, ``Schwartz'' initial data behaves in much the same way
that compactly supported data does:
\begin{corollary}
  If $(\phi, \psi)$ lies in the space
  \begin{equation*}
 \beta^{\infty} H^{1,\infty}_{0,b}\left([0,\beta_0) \times
    \sphere^{2}, \frac{\differential{\beta}\domega}{\beta^{3}}\right) \times
  \beta^{\infty} H^{1,\infty}_{0,b}\left( [0,\beta_{0}) \times \sphere^{2} ,
    \frac{\differential{\beta}\domega}{\beta^{3}}\right), 
  \end{equation*}
  then $\tilde{u}$ is smooth up to $\{ a = 0\}$ for $b < \beta_{0}e^{\frac{\beta_0^2+2M}{4M}}$.
\end{corollary}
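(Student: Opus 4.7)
The plan is to deduce this essentially as a direct corollary of Corollary~\ref{cor:e1} by letting the regularity and decay parameters tend to infinity. The key observation is that the space
\begin{equation*}
  \beta^{\infty}H^{1,\infty}_{0,b}\left([0,\beta_0)\times \sphere^{2}, \frac{\differential{\beta}\domega}{\beta^{3}}\right)
\end{equation*}
is, by definition, the intersection $\bigcap_{\lambda>0}\bigcap_{N>0} \beta^{\lambda-1}H^{1,N}_{0,b}$, so any Schwartz-type data $(\phi,\psi)$ lies in every weighted Sobolev space to which Corollary~\ref{cor:e1} applies.

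Concretely, I would fix an arbitrary $k\in\naturals_{0}$, choose $\alpha\in(0,1]$ (say $\alpha = 1$), set $\lambda = k+\alpha$, and pick any $N > 2+k$. By hypothesis, the pair $(\phi,\psi)$ lies in
\begin{equation*}
  \beta^{\lambda-1}H^{1,N}_{0,b}\left([0,\beta_0)\times \sphere^{2}, \frac{\differential{\beta}\domega}{\beta^{3}}\right) \times \beta^{\lambda-1}H^{1,N-1}_{0,b}\left([0,\beta_0)\times \sphere^{2}, \frac{\differential{\beta}\domega}{\beta^{3}}\right),
\end{equation*}
so Corollary~\ref{cor:e1} applies and yields that $\tilde{u}$ is $C^{k,\delta}$ up to $\{a=0\}$ for $b < \beta_0 e^{(\beta_0^2+2M)/(4M)}$, with $\delta = \min\{\alpha,1/2\}$.

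Since $k$ was arbitrary, $\tilde{u}$ has derivatives of all orders that are continuous (indeed H\"older continuous) up to the event horizon $\{a=0\}$ in the stated region, which is precisely the statement that $\tilde{u}$ is smooth there. There is no real obstacle: the nontrivial analytical work was already absorbed into Proposition~\ref{prop:e2} and Corollary~\ref{cor:e1}, and all that remains is to observe that the constants $C'$, $C''$ appearing in the previous estimates depend only on $b_0$, $N$, and $s_0$, so the conclusion propagates uniformly as $k\to\infty$ once the initial data has been assumed to have infinite-order vanishing and infinite tangential regularity at $\beta=0$.
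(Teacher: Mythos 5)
Your proof is correct and matches the paper's (implicit) reasoning: the paper states this corollary without proof immediately after Corollary~\ref{cor:e1}, treating it as the direct consequence you describe, namely that Schwartz-type data lies in every weighted space $\beta^{\lambda-1}H^{1,N}_{0,b}$ so Corollary~\ref{cor:e1} gives $C^{k,\delta}$ regularity for every $k$. (Your closing remark about uniformity of constants as $k\to\infty$ is unnecessary --- smoothness only requires the $C^{k}$ conclusion for each fixed $k$ --- but it is harmless.)
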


We now claim that $u$ has a classical asymptotic expansion if the
initial data does.  Indeed, suppose that
\begin{equation*}
  \phi, \psi \in \beta^{\lambda} C^{\infty}([0,\beta_{0})_{\beta}\times \sphere^{2}),
\end{equation*}
and define the ``k-th term'' $w^{k}$ by 
\begin{equation}
\label{kthterm.eventhorizon}
w^{0} = \tilde{u} = 2b\sqrt{M}u, \quad
w^{k} = \Pi_{i=1}^{k}\left( b\pd[b] - \lambda - i\right)\tilde{u} \quad \mathrm{for}\quad k\geq 1.
\end{equation}
 Notice that $b\partial b$ lifts to $\beta\partial_\beta-2\beta^2\partial_t$ on initial surface 
$t=0$. Hence 
$$(Z^Iw^k)|_{t=0}\in \beta^{\lambda+k} C^{\infty}([0,\beta_{0})_{\beta}\times \sphere^{2}) $$ 
for $Z\in\{a\partial_a, b\partial_b, Z^{ij}\}$ and multi-index $I$. 

\begin{proposition}
  \label{prop:e3}
  If $\phi$ and $\psi$ are as above with $0 < \lambda + k = l +
  \alpha$ for some integer $l$ and $\alpha \in (0,1]$, then $w^{k}$ is
  $C^{l,\delta}$ up to $\{ a= 0\}$ for $b < b_{0}$, where $\delta =
  \min \{ \alpha -\epsilon, \frac{1}{2}\}$ with $\epsilon > 0$
  arbitrarily small.
\end{proposition}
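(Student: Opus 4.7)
The plan is to reduce to Corollary~\ref{cor:e1} by induction on $k$, applied to $w^k$ after verifying that (i)~$w^k$ satisfies an inhomogeneous version of the conformal wave equation whose forcing can be estimated from the lower-order $w^j$, and (ii)~the initial data for $w^k$ has decay $\beta^{\lambda+k}$. The loss of $\epsilon$ in the H\"older exponent will come from passing between the classical space $\beta^\lambda C^\infty$ and the weighted Sobolev space appearing in Corollary~\ref{cor:e1}: because of the $\beta^{-3}$ factor in the measure $d\beta\,d\omega/\beta^3$, the inclusion $\beta^\mu C^\infty \hookrightarrow \beta^{\mu-\epsilon-1} H^{1,N}_{0,b}(d\beta\,d\omega/\beta^3)$ holds for any $\epsilon > 0$ but fails at $\epsilon = 0$, and this will be reflected in the final regularity.

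First I would verify the initial data. Using that $b\partial_b$ restricts on $\{t=0\}$ to $\beta\partial_\beta - 2\beta^2\partial_t$, that $\tilde u|_{t=0} = 2b\sqrt{M}\phi \in \beta^{\lambda+1} C^\infty$, and that the wave equation $(\tilde\Box_e + \gamma_e)\tilde u = 0$ allows us to express $\partial_t^{2j}\tilde u|_{t=0}$ in terms of spatial derivatives of $\phi,\psi$, each factor $b\partial_b - \lambda - i$ of the product kills the leading $\beta^{\lambda+i}$ component of $\tilde u|_{t=0}$. A short induction then gives the asserted decay $Z^I w^k|_{t=0} \in \beta^{\lambda+k} C^\infty$ for all $Z\in\{a\partial_a,b\partial_b,Z_{ij}\}$ and all multi-indices $I$, and similarly for $\partial_t w^k|_{t=0}$.

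Second I would derive the equation for $w^k$. Since $(\tilde\Box_e + \gamma_e)\tilde u = 0$ and, from~\eqref{eq:commutators-with-e}, $[\tilde\Box_e, b\partial_b] = b^2\sum_{|I|\leq 2}c_I Z^I$, an induction on $k$ yields $(\tilde\Box_e + \gamma_e) w^k = F_k$, where $F_k$ is a linear combination of $Z^I w^j$ with $j < k$ whose coefficients carry an explicit factor of $b^2$. The $b^2$ factor is the decisive point: when we run a weighted energy estimate for $w^k$ at weight $\beta^{\lambda+k-\epsilon-1}$, the forcing $F_k$ is measured with two extra powers of $b$ and thus reduces to controlling $w^j$ at weight $\beta^{\lambda+j-\epsilon-1}$, which is supplied by the inductive hypothesis.

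Finally I would run the weighted energy estimates from the proofs of Proposition~\ref{prop:e2} and Corollary~\ref{cor:e1} for $w^k$ with $\lambda$ replaced by $\lambda + k - \epsilon$, modified to accommodate the inhomogeneity $F_k$. The extra forcing terms, which live on a compact region and come with the $b^2$ factor, are absorbed by taking the constant $c$ in the exponential weight $e^{-cT_1'}$ sufficiently large; the divergence theorem and Sobolev embedding then deliver the desired $C^{l,\delta}$ regularity with $\delta = \min\{\alpha-\epsilon,1/2\}$. The main obstacle will be the bookkeeping of the commutator expansion of $\prod_{i=1}^k(b\partial_b - \lambda - i)$ against $\tilde\Box_e + \gamma_e$, confirming that every term arising in $F_k$ is absorbable by the preceding induction step; the algebra is routine but the $b^2$ factor in each commutator with $b\partial_b$ is precisely what makes the induction close.
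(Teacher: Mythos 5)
Your proposal matches the paper's proof in all essentials: the same induction on $k$ driven by the commutator identity $[\tilde\Box_e, b\pd[b]] = b^2\sum_{|I|\le 2}c_I Z^I$ (so that $(\tilde\Box_e+\gamma_e)w^k$ becomes a $b^2$-weighted combination of $Z^I w^j$), the same weighted energy estimates of Proposition~\ref{prop:e2} and Corollary~\ref{cor:e1} run at the shifted weight $\lambda+k-\epsilon$, and the same source of the $\epsilon$-loss (the classical space $\beta^{\lambda+k}C^\infty$ embeds into the weighted Sobolev scale only strictly below weight $\lambda+k$; the paper implements this with a ladder $\epsilon_0<\cdots<\epsilon_k=\epsilon$). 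The only imprecisions are minor: the forcing also contains a harmless $b^2 w^k$ term (so $j\le k$ rather than $j<k$), and it is controlled by Cauchy--Schwarz against the inductively bounded flux of the lower-order $w^j$ on each slice $\Sigma_s$ rather than by enlarging $c$ alone---but you already state that correct mechanism in your second paragraph.
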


\begin{proof}
  Notice that because $\left( \tilde{\Box}_{e}
    +\gamma_{e}\right)\tilde{u}=0$, we have 
  \begin{align*}
    \tilde{\Box}_{e} w^{i} &= b^{2}\sum_{|I|\leq 2}c_{I}Z^{I}w^{i-1} +
    (b\pd[b] + \lambda - i)\tilde{\Box}_{e}w^{i-1} \\
    &= b^{2} \sum_{j=0}^{i}\sum_{|I|\leq 2} c_{I,j}Z^{I,j}w^{j},
  \end{align*}
  where $Z \in \{ a\pd[a], b\pd[b], Z_{ij}\}$ and $c_{I,j}$ are smooth
  coefficients.  We choose $0 < \epsilon_{0} < \epsilon_{1}< \ldots <
  \epsilon_{k}=\epsilon$ and let $\lambda_{i} = \lambda + i -
  \epsilon_i$.  By Proposition~\ref{prop:e2}, choosing $N$ large and
  then $c = c(s_{0},b_{0},N)$, we may guarantee
  \begin{align*}
    &M_{1}^{N}(w^{i},s_{0};\lambda_{i}) \\
    &\quad\quad \leq 
    \sum_{|I|\leq N}\int_{\{t=0\}\cap\Omega}b^{-2\lambda_{i}}e^{-cT'}\langle
    \mathcal{F}_{\tilde{g}_{e}}(T_{1},Z^{I}w^{i}),\grad t\rangle
    _{\tilde{g}_{e}}\dmu_{t} \leq
     C_{i} < \infty 
  \end{align*}
  for all $i \leq k$.  Assume now that
  \begin{equation}
    \label{eq:w-i-bounded}
    M_{1}^{N-2i}\left(w^{i},s;\lambda_{i}\right) \leq C_{i}' \left( 1
      + (-s)^{\lambda_{i}-\frac{1}{2}}\right), 
  \end{equation}
  for $s > s_{0}$ and $i \leq k -1$.  This then implies that 
  \begin{equation*}
    \left( \int_{\Sigma_{s}\cap \Omega}b^{-2\lambda_{i+1}}e^{-cT'}\left(
        b^{2} \sum_{|I|\leq
          N-2i}c_{I}Z^{I}w^{i}\right)^{2}\dmu_{t}\right)^{\frac{1}{2}}
    \leq C_{i}''\left( 1 + (-s)^{\lambda_{i}}\right).
  \end{equation*}
  Then for $\lambda_{i+1}\leq \frac{1}{2}$, we have (as in the proof
  of Proposition~\ref{prop:e2})
  \begin{align*}
    &\pd[s]\left( M_{1}^{N-2i-2}\left( w^{i+1},
        s;\lambda_{i+1}\right)\right)^{2} \\
    &\quad\quad\leq \left( (1-2\lambda_{i+1})a^{-1} + Ca^{-\frac{1}{2}}\right)\left(
      M_{1}^{N-2i-2}(w^{i+1},s;\lambda_{i+1})\right)^{2} \\
    &\quad\quad\quad + Ca^{-\frac{1}{2}}M_{1}^{N-2i-2}
    (w^{i+1},s;\lambda_{i+1})(1+(-s)^{\lambda_{i}}), 
  \end{align*}
  while if $\lambda_{i+1}\geq \frac{1}{2}$, we have
  \begin{align*}
    &\pd[s]\left(
      M_{1}^{N-2i-2}(w^{i+1},s;\lambda_{i+1})\right)^{2}\\
    &\quad\quad \leq Ca^{-\frac{1}{2}} \left(
    M_{1}^{N-2i-2}(w^{i+1},s;\lambda_{i+1})\right)^{2} \\
   &\quad\quad\quad + Ca^{-\frac{1}{2}}M_{1}^{N-2i-2}(w^{i+1},s;\lambda_{i+1})\left( 1 +
    (-s)^{\lambda + k - 1 - \epsilon_{k-1}}\right).
  \end{align*}
  Hence,
  \begin{equation*}
    M_{1}^{N-2i-2}(w^{k},s;\lambda_{i+1})\leq C_{i+1}' \left( 1 +
      (-s)^{\lambda_{i+1}-\frac{1}{2}}\right) 
  \end{equation*}
  for all $s > s_{0}$.  Now we know that
  equation~(\ref{eq:w-i-bounded}) holds for $i=0$.  By induction and
  the arbitrariness of $\epsilon$, we have thus shown that
  \begin{equation*}
    M_{1}^{N-2k}\left( w^{k},s;\lambda+k-\epsilon \right) \leq C_{k}'
    \left( 1 + (-s)^{\lambda + k - \epsilon - \frac{1}{2}}\right)
  \end{equation*}
  for all $s > s_{0}$, where $C_{k}'$ depends on the initial data and
  $\epsilon$, $N$, and $k$.  We thus have that if $\lambda + k = l +
  \alpha > 0$, then by inserting $\pd[a]^{l}$ as in the proof of
  Corollary~\ref{cor:e1}, we see that for $\delta = \min \{ \alpha -
  \epsilon, \frac{1}{2}\}$, $w^{k}$ is $C^{l,\delta}$ up to
  $\{a=0\}$.
\end{proof}

\subsection{At future null infinity}
\label{sec:near-future-null}

We now adapt the argument from Section~\ref{sec:event-horizon} to the
region near null infinity.  For $t>0$ we introduce the coordinates
\begin{equation*}
 \bar{ \tau} = t - r - 2M\log (r-2M), \quad \bar{\rho} = \frac{1}{r}.
\end{equation*}
The conformal metric and its wave operator then become:
\begin{align*}
  &\tilde{g}_{\infty} = \bar{\rho}^{2}g_{S} = -(1-2M\bar{\rho})\bar{\rho}^{2}\dtau^{2} +
  2d\bar{\tau}d\bar{\rho}+ \domega^{2}, \\
  &\tilde{\Box}_{\infty} = 2\pd[\bar{\tau}]\pd[\bar{\rho}] +
  (1-2M\bar{\rho})(\bar{\rho}\pd[\bar{\rho}])^{2} + (1-4M\bar{\rho})\bar{\rho}\pd[\bar{\rho}] + \lap_{\omega}.
\end{align*}
We now calculate the commutators of $\tilde{\Box}_{\infty}$ with
various vector fields:
\begin{align*}
  &\left[ \tilde{\Box}_{\infty}, \pd[\bar{\tau}]\right] = \left[
    \tilde{\Box}_{\infty}, Z_{ij}\right] = 0, \\
  &\left[ \tilde{\Box}_{\infty}, \bar{\rho} \pd[\bar{\rho}] \right] =
  \tilde{\Box}_{\infty} - \left( 1 - 4M\bar{\rho}\right)(\bar{\rho}\pd[\bar{\rho}])^{2}
  - \left( 1- 8M\bar{\rho}\right)(\bar{\rho} \pd[\bar{\rho}]) - \lap_{\omega}, \\
  &\left[ \tilde{\Box}_{\infty}, \pd[\bar{\rho}]\right] = -2\left( 1 -
    3M\bar{\rho}\right)\bar{\rho}\pd[\bar{\rho}]^{2} - 2\left( 1 - 6M\bar{\rho}\right)\pd[\bar{\rho}].
\end{align*}
Moreover, we have that
\begin{align*}
  &\Box_{S} u =0 \quad \Longleftrightarrow \quad \left(
    \tilde{\Box}_{\infty} + \gamma_{\infty}\right) \tilde{u} = 0, \\
  &\tilde{u}= \bar{\rho}^{-1} u, \quad \gamma_{\infty} = - \bar{\rho}
  \tilde{\Box}_{\infty}\bar{\rho}^{-1} = -2M\bar{\rho}.
\end{align*}
In the coordinates $(\bar{\tau}, \bar{\rho}, \omega)$, we choose the time-like
function for $\bar{\rho} < \min \left\{ 1 , \frac{1}{2M}\right\}$ as follows:
\begin{equation*}
  \bar{T} = \bar{\tau} - \bar{\rho}, \quad \langle\grad \bar{T} , \grad \bar{T}
  \rangle_{\tilde{g}_{\infty}} = -2 + \bar{\rho}^{2} (1 -2M\bar{\rho}) < 0.
\end{equation*}
We then calculate the vector field $\mathcal{F}$ in this context:
\begin{align*}
  &\langle \mathcal{F}_{\tilde{g}_{\infty}} (\bar{T},v), \grad
  \bar{T}\rangle_{\tilde{g}_{\infty}} \\
  &\quad\quad=  \left( 1 - \bar{\rho}^{2}(1-2M \bar{\rho})\right)\left|
    \pd[ \bar{\rho}]v\right|^{2} 
   + \frac{1}{2} \left| \pd[\bar{\tau}]v +
     \bar{\rho}^{2}(1-2M \bar{\rho})\pd[ \bar{\rho}]v\right|^{2} \\
  &\quad\quad\quad
    +  \frac{1}{2}\left|\pd[\bar{\tau}]v\right|^{2}+ \frac{1}{2} \left( 2 -
     \bar{\rho}^{2}(1-2M \bar{\rho})\right)\left( \left| \grad _{\omega}v\right|^{2}
    + v^{2}\right), \\
 & \mathcal{Q}_{\tilde{g}_{\infty}}(\bar{T},v) = - \bar{\rho} (1 - 3M \bar{\rho})
  \left( \left| \pd[ \bar{\rho}]v\right|^{2} + \left| \grad
      _{\omega}\right|^{2} \right).
\end{align*}
The above computation allows us to prove the following Proposition,
whose proof is the same as that of Proposition~\ref{prop:e1} with the
corresponding time-like function:
\begin{proposition}
  \label{prop:i1}
  If $(\phi, \psi)\in C^{\infty}\left( (2M,\infty)_{r} \times
    \sphere^{2}\right)$ satisfy $\supp(\phi)\cup \supp(\psi) \subset
  (2M, \epsilon^{-1})$ for some $\epsilon > 0$, then $\tilde{u}$ is
  smooth up to $\{ \bar{\rho} = 0\}$.
\end{proposition}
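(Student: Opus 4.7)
The plan is to adapt the Friedlander-type energy argument of Proposition~\ref{prop:e1} to the null-infinity side, working with the conformally rescaled solution $\tilde{u}=\bar{\rho}^{-1}u$ satisfying $(\tilde{\Box}_{\infty}+\gamma_{\infty})\tilde{u}=0$ and with the time-like function $\bar{T}=\bar{\tau}-\bar{\rho}$. Since $\tilde{g}_{\infty}$ is a smooth Lorentzian metric at $\{\bar{\rho}=0\}$ and this hypersurface is characteristic, the metric extends smoothly to $\{\bar{\rho}>-\epsilon'\}$, which will allow us to run energy estimates on a domain that reaches (and crosses) null infinity.

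Finite speed of propagation comes first: outgoing null geodesics preserve $\bar{\tau}$, so the support hypothesis forces $\tilde{u}\equiv 0$ whenever $\bar{\tau}\leq\bar{\tau}_{\min}:=-\epsilon^{-1}-2M\log(\epsilon^{-1}-2M)$. Fix $\bar{\tau}_{0}>-\bar{\tau}_{\min}$ and let $\bar{\Omega}_{t_{0}}$ be the region bounded by $\{t=0\}$, $\{t=t_{0}\}$, $\{\bar{\tau}=-\bar{\tau}_{0}\}$, and $\{\bar{\tau}=\bar{\tau}_{0}\}$, localized to a neighborhood of $\{\bar{\rho}=0\}$ (on the complement, where $r$ is bounded, smoothness is standard). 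The $t$-level sets are spacelike, while $\{\bar{\tau}=\pm\bar{\tau}_{0}\}$ are outgoing null; because $\langle\grad\bar{T},\grad t\rangle_{\tilde{g}_{\infty}}<0$ and $\langle\grad\bar{T},\grad\bar{\tau}\rangle_{\tilde{g}_{\infty}}<0$, the fluxes of $\mathcal{F}_{\tilde{g}_{\infty}}(\bar{T},v)$ through these three faces of $\bar{\Omega}_{t_{0}}$ have the sign favorable for Stokes' theorem.

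With $\Sigma_{s}=\{\bar{T}=s\}$ and $Z\in\{\pd[\bar{\tau}],\pd[\bar{\rho}],Z_{ij}\}$, define
\begin{equation*}
  \bar{M}^{N}(\tilde{u},s)=\Bigl(\sum_{|I|\leq N}\int_{\Sigma_{s}\cap\bar{\Omega}_{t_{0}}}e^{-c\bar{T}}\langle\mathcal{F}_{\tilde{g}_{\infty}}(\bar{T},Z^{I}\tilde{u}),\grad\bar{T}\rangle_{\tilde{g}_{\infty}}\dmu_{\bar{T}}\Bigr)^{\!1/2}.
\end{equation*}
The commutators of $\tilde{\Box}_{\infty}$ with the $Z$'s listed before the proposition, together with $(\tilde{\Box}_{\infty}+\gamma_{\infty})\tilde{u}=0$ and the smooth boundedness of $\gamma_{\infty}=-2M\bar{\rho}$, let us express each $(\tilde{\Box}_{\infty}-1)Z^{I}\tilde{u}$ as a linear combination of $Z^{I'}\tilde{u}$ for $|I'|\leq|I|+1$ with smooth, bounded coefficients. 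The divergence of $e^{-c\bar{T}}\mathcal{F}_{\tilde{g}_{\infty}}(\bar{T},Z^{I}\tilde{u})$ then reduces to the negative-definite principal piece $-c\,e^{-c\bar{T}}\langle\mathcal{F}_{\tilde{g}_{\infty}}(\bar{T},Z^{I}\tilde{u}),\grad\bar{T}\rangle_{\tilde{g}_{\infty}}$ plus lower-order contributions from $\mathcal{Q}_{\tilde{g}_{\infty}}$ and the commutators; choosing $c=c(\bar{\tau}_{0},N)$ sufficiently large makes it pointwise non-positive. Stokes' theorem then bounds $(\bar{M}^{N}(\tilde{u},s))^{2}$ uniformly in $s$ and $t_{0}$ by the initial-data energy on $\{t=0\}\cap\bar{\Omega}_{t_{0}}$, which is finite. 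Sending $t_{0}\to\infty$ and applying Sobolev embedding (with $N$ and $\bar{\tau}_{0}$ arbitrary) yields that $\tilde{u}$ is smooth up to $\{\bar{\rho}=0\}$.

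The main subtlety is the second-order term $-2\bar{\rho}(1-3M\bar{\rho})\pd[\bar{\rho}]^{2}$ in $[\tilde{\Box}_{\infty},\pd[\bar{\rho}]]$: iterating $\pd[\bar{\rho}]$ $k$ times produces a $-2\bar{\rho}(1-3M\bar{\rho})\pd[\bar{\rho}]^{k+2}\tilde{u}$ contribution that formally exceeds the norm we control. However, the extra factor of $\bar{\rho}$, vanishing at null infinity, together with a Cauchy--Schwarz application, bounds this contribution by the positive-definite part of the energy at the same multi-index level, which is absorbed by taking $c$ large. This is exactly the analog of the $\pd[\rho]^{2}$ issue handled in the proof of Proposition~\ref{prop:e1}, and the structural core of the argument transfers with the substitutions $\rho\leftrightarrow\bar{\rho}$, $\tau\leftrightarrow\bar{\tau}$, $u\leftrightarrow\tilde{u}$.
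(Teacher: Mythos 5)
Your proposal is correct and follows essentially the same route as the paper, which proves Proposition~\ref{prop:i1} precisely by repeating the argument of Proposition~\ref{prop:e1} for the rescaled solution $\tilde{u}=\bar{\rho}^{-1}u$ with the time-like function $\bar{T}=\bar{\tau}-\bar{\rho}$ and the computed quantities $\langle\mathcal{F}_{\tilde{g}_{\infty}}(\bar{T},v),\grad\bar{T}\rangle_{\tilde{g}_{\infty}}$ and $\mathcal{Q}_{\tilde{g}_{\infty}}(\bar{T},v)$. One small inaccuracy in your last paragraph: commuting $\tilde{\Box}_{\infty}$ through $\pd[\bar{\rho}]^{k}$ produces at worst $O(\bar{\rho})\pd[\bar{\rho}]^{k+1}\tilde{u}$ (not $\pd[\bar{\rho}]^{k+2}\tilde{u}$), i.e.\ only a first derivative of $Z^{I}\tilde{u}$, so the term is controlled by the flux at the same multi-index level exactly as you claim, and the conclusion stands.
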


We now consider non-compactly supported data.  Near $S_{0}$ in the
partial compactification depicted in Figure~\ref{fig:partial-comp}, we
may use coordinates $(\bar{a},\bar{b})$ given by 
\begin{equation*}
  \bar{a} = -\bar{\rho}\bar{ \tau} = \frac{-t+r+2M\log(r-2M)}{r} \in [0,1], \quad \bar{b} =
  -\frac{1}{\bar{\tau}} .
\end{equation*}
Together with the spherical coordinates $\omega$, these are valid
coordinates near the intersection of $S_{0}$ and $S_{1}^{+}$, where
$\bar{a}$ is a defining function for $E_{1}^{+}$ and $\bar{b}$ is a defining
function for $E_{0}$. Near the interior of $E_{1}^{+}$,  we can take coordinates  $(\bar{\tau},\bar{\rho})$ as before. 
We extend $(\bar{a},\bar{b})$ up to initial surface $t=0$, which defines the smooth structure on this partial compactification near $S_0$. 
Notice that,  at $t=0$, 
$$\bar{a}=\frac{\log(r-2M)}{r}+1.$$
In the coordinate system $(\bar{a},\bar{b},\omega)$, we have that
\begin{align*}
  &\tilde{g}_{\infty} = 2d\bar{a} \left( \frac{\differential{\bar{b}}}{\bar{b}}\right)
  + \bar{a}\left[ 2-\bar{a}(1-2M\bar{a}\bar{b})\right]\left(
    \frac{\differential{\bar{b}}}{\bar{b}}\right)^{2} + \domega^{2}, \\
  &\tilde{\Box}_{\infty} = 2\pd[\bar{a}]\left( \bar{b}\pd[\bar{b}] - \bar{a}\pd[\bar{a}]\right) +
  \left( 1 - 2M\bar{a}\bar{b}\right)(\bar{a}\pd[\bar{a}])^{2} + \left( 1 - 4M\bar{a}\bar{b}\right)\bar{a}\pd[\bar{a}]
  + \lap_{\omega}.
\end{align*}
We also calculate the commutators of $\tilde{\Box}_{\infty}$ with relevant
vector fields:
\begin{align*}
  &\left[ \tilde{\Box}_{\infty} , Z_{ij}\right] = 0,\\
  & \left[ \tilde{\Box}_{\infty}, \bar{b}\pd[\bar{b}]\right] = 2M\bar{a}\bar{b}(\bar{a}\pd[\bar{a}])^{2} +
  4M\bar{a}\bar{b}(\bar{a}\pd[\bar{a}]), \\
  &\left[ \tilde{\Box}_{\infty}, \bar{a}\pd[\bar{a}]\right] =
  \tilde{\Box}_{\infty} - \left( 1 - 4M\bar{a}\bar{b}\right) (\bar{a}\pd[\bar{a}])^{2} -
  (1-8M\bar{a}\bar{b})(\bar{a}\pd[\bar{a}]) - \lap_{\omega}.
\end{align*}
In this coordinate system we now choose the time like functions
$\bar{T}_{1}$ and $\bar{T}_{1}'$ as follows:
\begin{align*}
  &\bar{T}_{1} = -\bar{a} + \log \bar{b}, & & \langle \grad \bar{T}_{1},\grad \bar{T}_{1}\rangle
  _{\tilde{g}_{\infty}} = -2 - \bar{a}\left( 2-\bar{a}(1-2M\bar{a}\bar{b})\right) < 0; \\
  &\bar{T}_{1}' = -\bar{a},& & \langle \grad \bar{T}_{1}', \grad \bar{T}_{1}'
  \rangle_{\tilde{g}_{\infty}} = -\bar{a}\left( 2 - \bar{a} (1-2M\bar{a}\bar{b})\right) < 0.
\end{align*}
The vector field $\mathcal{F}$ and the function $\mathcal{Q}$ are then
given by
\begin{align*}
  &\langle \mathcal{F}_{\tilde{g}_{\infty}} (\bar{T}_{1},v), \grad
  \bar{T}_{1}' \rangle _{\tilde{g}_{\infty}} \\
  &\quad\quad= \frac{1}{2} \left( 2 - \bar{a} (1-2M\bar{a}\bar{b})\right)\bar{a}\left|
    \pd[\bar{a}]v\right|^{2}
  + \frac{1}{2} \left| \bar{a} \left( 2 - \bar{a}
      (1-2M\bar{a}\bar{b})\right)\pd[\bar{a}]v - \bar{b}\pd[\bar{b}]v\right|^{2}  \\
  &\quad \quad\quad
  + \frac{1}{2}\left|
    \bar{b}\pd[\bar{b}]v\right|^{2}+ \frac{1}{2} \left( 1 + \bar{a} \left( 2 - \bar{a}
      (1-2M\bar{a}\bar{b})\right)\right)\left( \left|\grad_{\omega}v\right|^{2} +
    v^{2}\right), \\
  &\mathcal{Q}_{\tilde{g}_{\infty}}(\bar{T}_{1}, v) = \left( 1 - \bar{a}
    (1-3M\bar{a}\bar{b})\right)\left( \left|\pd[\bar{a}]v\right|^{2} - \left|
      \grad_{\omega}v\right|^{2} - v^{2} \right) + M\bar{a}\bar{b}\left|\bar{a} \pd[\bar{a}]v\right|^{2}.
\end{align*}

We are now able to prove the following proposition:
\begin{proposition}
  \label{prop:i2}
  If the initial data $(\phi, \psi)$ lie in the weighted Sobolev space
  \begin{equation*}
     \bar{\rho}^{\lambda+1}H^{N+1}_{b}\left( \left[0,\frac{1}{r_0}\right)_{\bar{\rho}}\times
      \sphere^{2}, \frac{\differential{\bar{\rho}}\domega}{\bar{\rho}}\right) \times
    \bar{\rho}^{\lambda+2}H^{N}_{b}\left( \left[0,\frac{1}{r_0}\right)_{\bar{\rho}}\times \sphere^{2},
      \frac{\differential{\bar{\rho}}\domega}{\bar{\rho}}\right),
  \end{equation*}
  with $r_0+2M\log(r_0-2M)>0$, $N > 2$ and $\lambda > 0$, then $\tilde{u}$ is $C^{\delta}$ up
  to $\{ \bar{a}=0\}$ for $\bar{b} < \bar{b}_{0}$, where $\delta = \min \{ \lambda ,
  \frac{1}{2}\}$ and $\bar{b}_0=1/(r_0+2M\log(r_0-2M))$.
\end{proposition}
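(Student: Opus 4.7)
The plan is to mirror the proof of Proposition~\ref{prop:e2}, working now near null infinity in the coordinates $(\bar{a},\bar{b},\omega)$ with the conformal metric $\tilde{g}_{\infty}$, the time-like functions $\bar{T}_1$ and $\bar{T}_1'$, and the formulas for $\mathcal{F}_{\tilde{g}_{\infty}}$ and $\mathcal{Q}_{\tilde{g}_{\infty}}$ computed above. The first step is to translate the weighted Sobolev hypothesis on $(\phi,\psi)$ at $r=\infty$ into a statement about $Z^I\tilde{u}|_{t=0}$ for $Z\in\{\bar{a}\pd[\bar{a}],\bar{b}\pd[\bar{b}],Z_{ij}\}$. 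Since $\tilde{u}=\bar{\rho}^{-1}u$ and the weights $\bar{\rho}^{\lambda+1}$ and $\bar{\rho}^{\lambda+2}$ on $\phi$ and $\psi$ compensate for one rescaling factor and for one $\pd[t]$ respectively, one obtains $(Z^I\tilde{u})|_{t=0}\in\bar{b}^{\lambda}H^{1,N-|I|}_{0,b}$ in a neighborhood of $S_0$ for $|I|\leq N$.

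Next I set up the geometry. Let $\bar{\Sigma}_s=\{\bar{T}_1'=s\}$; these surfaces are space-like for $s<0$ and degenerate to a characteristic as $s\to 0$. Let $\bar{\Omega}$ be the domain bounded by $\bar{\Sigma}_0$, $\{t=0\}$, $\{\bar{b}=0\}$, and $\bar{S}=\{\bar{T}_1=\log\bar{b}_0\}$, and introduce the weighted energies
\begin{align*}
 \bar{M}_1^N(\tilde{u},s;\lambda) &= \bigg(\sum_{|I|\leq N}\int_{\bar{\Sigma}_s\cap\bar{\Omega}}\bar{b}^{-2\lambda}e^{-c\bar{T}_1'}\langle \mathcal{F}_{\tilde{g}_{\infty}}(\bar{T}_1,Z^I\tilde{u}),\grad\bar{T}_1'\rangle_{\tilde{g}_{\infty}}\dmu_{\bar{T}_1'}\bigg)^{1/2}, \\
 \bar{L}_1^N(\tilde{u},s;\lambda) &= \bigg(\sum_{|I|\leq N}\int_{\bar{S}\cap\{\bar{T}_1'<s\}\cap\bar{\Omega}}\bar{b}^{-2\lambda}e^{-c\bar{T}_1'}\langle \mathcal{F}_{\tilde{g}_{\infty}}(\bar{T}_1,Z^I\tilde{u}),\grad\bar{T}_1\rangle_{\tilde{g}_{\infty}}\dmu_{\bar{T}_1}\bigg)^{1/2},
\end{align*}
with $Z\in\{\bar{a}\pd[\bar{a}],\bar{b}\pd[\bar{b}],Z_{ij}\}$.

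I then apply Stokes' theorem in two stages. First, choose $s_0<0$ close to $0$ so that $\bar{\Sigma}_{s_0}\cap\{t=0\}\cap\bar{\Omega}=\emptyset$ and integrate over $\bar{\Omega}_{s_0}=\bar{\Omega}\cap\{\bar{T}_1'\leq s_0\}$; using the equation $(\tilde{\Box}_{\infty}+\gamma_{\infty})\tilde{u}=0$ and the commutator identities for $\tilde{\Box}_{\infty}$ with $Z^I$, the bulk divergence $\operatorname{div}_{\tilde{g}_{\infty}}(\bar{b}^{-2\lambda}e^{-c\bar{T}_1'}\mathcal{F}_{\tilde{g}_{\infty}}(\bar{T}_1,Z^I\tilde{u}))$ can be made nonpositive by taking $c=c(\bar{b}_0,s_0,N)$ large, which bounds $\bar{M}_1^N(\tilde{u},s_0)^2+\bar{L}_1^N(\tilde{u},s_0)^2$ by the initial data norm. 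Second, Stokes' theorem on $\bar{\Omega}_{s_0}^s=\bar{\Omega}\cap\{s_0\leq\bar{T}_1'\leq s\}$ yields a differential inequality: combining $-2\lambda\langle\mathcal{F}_{\tilde{g}_{\infty}}(\bar{T}_1,\cdot),\grad\log\bar{b}\rangle_{\tilde{g}_{\infty}}$ with $\mathcal{Q}_{\tilde{g}_{\infty}}(\bar{T}_1,\cdot)$, the leading $|\pd[\bar{a}]Z^I\tilde{u}|^2$ contributions combine with coefficient $(1-2\lambda)$, and since the flux quantity only controls $\bar{a}|\pd[\bar{a}]\cdot|^2$, an $\bar{a}^{-1}$ factor is left behind. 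One obtains
\[
\pd[s]\left(\bar{M}_1^N\right)^2\leq\begin{cases}\left((1-2\lambda)\bar{a}^{-1}+C\bar{a}^{-1/2}\right)\left(\bar{M}_1^N\right)^2 & \lambda<\tfrac{1}{2},\\ C\bar{a}^{-1/2}\left(\bar{M}_1^N\right)^2 & \lambda\geq\tfrac{1}{2},\end{cases}
\]
(with $\bar{a}=-s$ on $\bar{\Sigma}_s$). Integrating from $s_0$ to $s<0$ and invoking Sobolev embedding in the transverse variables $(\bar{b},\omega)$ gives the claimed pointwise $C^\delta$ bound on $\tilde{u}$ up to $\{\bar{a}=0\}$ for $\bar{b}<\bar{b}_0$.

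The main technical obstacle is that, in contrast to the event horizon case, the coefficient $\left(1-\bar{a}(1-3M\bar{a}\bar{b})\right)$ appearing in $\mathcal{Q}_{\tilde{g}_{\infty}}(\bar{T}_1,v)$ does not vanish at $\bar{a}=0$, so the destabilizing $|\grad_\omega v|^2+v^2$ pieces of $\mathcal{Q}$ must be absorbed by taking $c$ large in the Stokes identity rather than through geometric degeneracy; careful bookkeeping is required to ensure that the crucial $(1-2\lambda)\bar{a}^{-1}$ structure is preserved once all subprincipal terms are swept into the $C\bar{a}^{-1/2}$ remainder. A secondary subtlety, noted in Section~\ref{sec:part-comp}, is that the initial surface $\{t=0\}$ does not meet $S_0$ smoothly but instead carries a logarithmic correction; this forces us to express the initial data hypothesis in $\bar{\rho}$-derivatives, and one must verify via the explicit defining relation for $\{t=0\}$ at the corner $S_0$ that the assumed regularity transfers to the required $L^2$ bound on $\bar{b}^{-\lambda}Z^I\tilde{u}|_{t=0}$ in the $(\bar{a},\bar{b},\omega)$ system.
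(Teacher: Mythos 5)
Your proposal is correct and follows essentially the same route as the paper, whose proof of this proposition consists precisely of transferring the weighted Sobolev hypothesis to the Cauchy surface via the lift of $\pd[t]$ to $\bar{b}(\bar{b}\pd[\bar{b}]-\bar{a}\pd[\bar{a}])$ and then rerunning the energy argument of Proposition~\ref{prop:e2} with the time-like pair $(\bar{T}_{1},\bar{T}_{1}')$. The only quibble is that the $-\left(\left|\grad_{\omega}v\right|^{2}+v^{2}\right)$ contribution in $\mathcal{Q}_{\tilde{g}_{\infty}}(\bar{T}_{1},v)$ in fact enters with a favorable sign once combined with $-2\lambda\langle\mathcal{F}_{\tilde{g}_{\infty}}(\bar{T}_{1},v),\grad\log\bar{b}\rangle_{\tilde{g}_{\infty}}$, so it is not ``destabilizing''; your over-caution there is harmless.
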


\begin{proof}
  The proof is the same as the proof of Proposition~\ref{prop:e2} with
  the corresponding time-like function.  Because $\pd[t]$ lifts to
  $\bar{b}\left( \bar{b}\pd[\bar{b}]-\bar{a}\pd[\bar{a}]\right)$, the assumption on the initial data
  implies that  at the Cauchy surface.
  \begin{equation*}
  Z^I \tilde{u}|_{t=0}  \in \bar{\rho}^{\lambda}H^{N+1-|I|}_{b} \left(
     \left[0,\frac{1}{r_0}\right)_{\bar{\rho}}\times\sphere^{2}, \frac{\differential{\bar{\rho}}\domega}{\bar{\rho}}\right)
  \end{equation*}
for $Z\in\{\bar{a}\partial_{\bar{a}}, \bar{b}\partial_{\bar{b}},Z^{ij}\}$ and multi-index $I$.
\end{proof}

As in the previous section, we have the following corollary:
\begin{corollary}
  \label{cor:i1}
  Suppose $\lambda = k+\alpha$ for some integer $k$ and $\alpha \in
  (0,1]$.  If the initial data $(\phi, \psi) $ lie in the space
  \begin{equation*}
    \bar{\rho}^{\lambda+1}H^{N+1}_{b}\left( \left[0,\frac{1}{r_0}\right)_{\bar{\rho}}\times
      \sphere^{2}, \frac{\differential{\bar{\rho}}\domega}{\bar{\rho}}\right) \times
   \bar{ \rho}^{\lambda+2}H^{N}_{b}\left(  \left[0,\frac{1}{r_0}\right)_{\bar{\rho}}\times \sphere^{2},
      \frac{\differential{\bar{\rho}}\domega}{\bar{\rho}}\right), 
  \end{equation*}
  with $r_0+2M\log(r_0-2M)>0$ and $N>2+k$, then $\tilde{u}$ is $C^{k,\delta}$ up to $\{\bar{a}=0\}$ for $\bar{b} <
  \bar{b}_{0}$, where $\delta = \min \{ \alpha , \frac{1}{2}\}$ and $\bar{b}_0=1/(r_0+2M\log(r_0-2M))$.
\end{corollary}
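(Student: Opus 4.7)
The plan is to mirror the proof of Corollary~\ref{cor:e1}, reducing the higher-order statement to Proposition~\ref{prop:i2} by commuting $\pd[\bar{a}]^{k}$ through the wave operator $\tilde{\Box}_{\infty}$. First I would compute, using the expression for $\tilde{\Box}_{\infty}$ in the coordinates $(\bar{a},\bar{b},\omega)$, the commutators
\begin{align*}
  &\left[\tilde{\Box}_{\infty}, \pd[\bar{a}]^{k}\right] = 2k\,\pd[\bar{a}]^{k+1} + \sum_{|I|+i\leq k+1,\ i\leq k} c_{I,i}\, Z^{I}\pd[\bar{a}]^{i},\\
  &\left[\bar{a}\pd[\bar{a}],\pd[\bar{a}]^{k}\right] = -k\,\pd[\bar{a}]^{k},\quad \left[\bar{b}\pd[\bar{b}], \pd[\bar{a}]^{k}\right] = \left[Z_{ij}, \pd[\bar{a}]^{k}\right] = 0,
\end{align*}
with $Z \in \{\bar{a}\pd[\bar{a}], \bar{b}\pd[\bar{b}], Z_{ij}\}$ and $c_{I,i}$ smooth. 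These have exactly the same structure as the commutators used in the proof of Corollary~\ref{cor:e1}, with the $b^{2}$ weight of the event-horizon case replaced by coefficients that are harmless at $\{\bar{a}=0\}$.

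Next I would introduce the aggregated energies
\begin{align*}
  &\widetilde{M}_{1}^{N}(\tilde{u}, s;\lambda) = \left(\sum_{i\leq k} \left(M_{1}^{N-i}(\pd[\bar{a}]^{i}\tilde{u}, s;\lambda)\right)^{2}\right)^{1/2},\\
  &\widetilde{L}_{1}^{N}(\tilde{u}, s;\lambda) = \left(\sum_{i\leq k} \left(L_{1}^{N-i}(\pd[\bar{a}]^{i}\tilde{u}, s;\lambda)\right)^{2}\right)^{1/2},
\end{align*}
where $M_{1}^{N}$ and $L_{1}^{N}$ are the quantities introduced in the proof of Proposition~\ref{prop:i2}. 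The hypothesis that the data lies in $\bar{\rho}^{\lambda+1}H^{N+1}_{b} \times \bar{\rho}^{\lambda+2}H^{N}_{b}$ with $N>2+k$, together with the fact that $\pd[t]$ lifts to $\bar{b}(\bar{b}\pd[\bar{b}] - \bar{a}\pd[\bar{a}])$, guarantees that $\widetilde{M}_{1}^{N}(\tilde{u}, s_{0};\lambda)$ is finite for an initial slice $\Sigma_{s_{0}}$.

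Applying the divergence argument from Proposition~\ref{prop:i2} to each summand $\pd[\bar{a}]^{i}\tilde{u}$, and invoking the commutator identities above together with the equation $(\tilde{\Box}_{\infty}+\gamma_{\infty})\tilde{u}=0$ to absorb the resulting inhomogeneous terms, I expect to obtain in analogy with Corollary~\ref{cor:e1} the differential inequality
\begin{equation*}
  \pd[s]\left(\widetilde{M}_{1}^{N}(\tilde{u},s)\right)^{2} + \pd[s]\left(\widetilde{L}_{1}^{N}(\tilde{u},s)\right)^{2} \leq \left((1-2\alpha)\bar{a}^{-1} + C\bar{a}^{-1/2}\right)\left(\widetilde{M}_{1}^{N}(\tilde{u},s)\right)^{2}
\end{equation*}
when $\alpha < 1/2$, and the analogous inequality without the $(1-2\alpha)\bar{a}^{-1}$ term when $\alpha \geq 1/2$. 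Integrating in $s$ yields $\widetilde{M}_{1}^{N}(\tilde{u},s) \leq C'(1+(-s)^{\alpha-1/2})$ in the former case and a uniform bound in the latter; Sobolev embedding on the sphere, which consumes $k+2$ of the available $N+1$ derivatives, then gives pointwise control of $\pd[\bar{a}]^{i}\tilde{u}$ for $i \leq k$ up to $\{\bar{a}=0\}$ and a bound $|\pd[\bar{a}]^{k+1}\tilde{u}| \leq C\bar{a}^{\delta - 1}$ with $\delta = \min\{\alpha,1/2\}$. A final integration in $\bar{a}$ produces the claimed $C^{k,\delta}$ regularity up to $\{\bar{a}=0\}$ for $\bar{b}<\bar{b}_{0}$.

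The main obstacle, as in the event-horizon case, is tracking the powers of $\bar{a}^{-1/2}$ that appear when estimating the unweighted $|\pd[\bar{a}]v|^{2}$ contribution in $\mathcal{Q}_{\tilde{g}_{\infty}}$ against the $F$-flux, which only controls $\bar{a}|\pd[\bar{a}]v|^{2}$ near $\{\bar{a}=0\}$; this is the source of the $1/2$ ceiling on $\delta$ and is handled identically to the treatment of the corresponding term in the proof of Corollary~\ref{cor:e1}.
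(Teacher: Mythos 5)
Your proposal is correct and follows essentially the same route as the paper: the paper's own proof simply records the commutator identity $\left[\tilde{\Box}_{\infty},\pd[\bar{a}]^{k}\right] = 2k\,\pd[\bar{a}]^{k+1} + \sum_{|I|+i\leq k+1,\ i\leq k}c_{I,i}Z^{I}\pd[\bar{a}]^{i}$ and then invokes the argument of Corollary~\ref{cor:e1} with the time-like functions of Proposition~\ref{prop:i2}, which is exactly the reduction you carry out. The aggregated energies, the differential inequality in $s$, and the final integration in $\bar{a}$ all match the paper's intended argument.
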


\begin{proof}
  The proof is the same as the proof of Corollary~\ref{cor:e1} with
  the corresponding time-like function.  This is because
  $\tilde{\Box}_{\infty}$ satisfies:
  \begin{equation*}
    \left[ \tilde{\Box}_{\infty}, \pd[\bar{a}]^{k}\right] = 2k\pd[\bar{a}]^{k+1} +
    \sum _{|I|+i \leq k +1, i\leq k}c_{I,i}Z^{I}\pd[\bar{a}]^{i},
  \end{equation*}
  where $Z\in \{ \bar{a}\pd[\bar{a}], \bar{b}\pd[\bar{b}], Z_{ij}\}$ and $c_{I,i}$ are smooth
  coefficients.  
\end{proof}

Again as before, ``Schwartz'' data behaves in much the same way as
compactly supported data:
\begin{corollary}
  \label{cor:i2}
  For initial data $(\phi, \psi)$ in 
  \begin{equation*}
   \bar{\rho}^{\infty} H^{\infty}_{b}\left( \left[0,\frac{1}{r_0}\right)_{ \bar{\rho}}\times
      \sphere^{2}, \frac{\differential{ \bar{\rho}}\domega}{ \bar{\rho}}\right) \times
   \bar{\rho}^{\infty}H^{\infty}_{b}\left(\left[0,\frac{1}{r_0}\right)_{ \bar{\rho}}\times \sphere^{2} ,
      \frac{\differential{ \bar{\rho}}\domega}{ \bar{\rho}} \right),
  \end{equation*}
  with $r_0+2M\log(r_0-2M)>0$, the rescaled solution $\tilde{u}$ is
  smooth up to $\{\bar{a}=0\}$ for $\bar{b}<
  \bar{b}_{0}=1/(r_0+2M\log(r_0-2M))$.
\end{corollary}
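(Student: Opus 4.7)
The plan is to deduce this corollary directly from Corollary~\ref{cor:i1} by exploiting the fact that the hypothesis places $(\phi,\psi)$ in \emph{every} weighted Sobolev space of the form required there. By definition,
\begin{equation*}
  \bar{\rho}^{\infty}H^{\infty}_{b}\left(\left[0,1/r_0\right)\times\sphere^{2}, \frac{\differential{\bar{\rho}}\domega}{\bar{\rho}}\right) = \bigcap_{\lambda,N}\bar{\rho}^{\lambda}H^{N}_{b}\left(\left[0,1/r_0\right)\times\sphere^{2}, \frac{\differential{\bar{\rho}}\domega}{\bar{\rho}}\right),
\end{equation*}
so for any $k\in\naturals_{0}$ and any $\alpha\in(0,1]$, setting $\lambda=k+\alpha$ and choosing any $N>2+k$, the pair $(\phi,\psi)$ automatically satisfies the hypotheses of Corollary~\ref{cor:i1}.

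First I would fix an arbitrary integer $k\geq 0$, pick (say) $\alpha=1/2$ and $N=k+3$, and observe that the inclusion above places $(\phi,\psi)$ in
\begin{equation*}
  \bar{\rho}^{k+3/2}H^{k+4}_{b}\times \bar{\rho}^{k+5/2}H^{k+3}_{b}.
\end{equation*}
Applying Corollary~\ref{cor:i1} with these parameters yields that $\tilde{u}$ is $C^{k,1/2}$ up to $\{\bar{a}=0\}$ on the region $\bar{b}<\bar{b}_{0}=1/(r_0+2M\log(r_0-2M))$. Since $k$ was arbitrary and the region $\bar{b}<\bar{b}_{0}$ is independent of $k$, this gives $\tilde{u}\in C^{k}$ up to $\{\bar{a}=0\}$ for every $k$, i.e., $\tilde{u}$ is smooth up to $\{\bar{a}=0\}$ on $\bar{b}<\bar{b}_{0}$, as claimed.

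There is essentially no obstacle: all the analytic work has already been done in Proposition~\ref{prop:i2} and Corollary~\ref{cor:i1}, and the sole content of the present corollary is the observation that ``Schwartz-type'' data feeds into those statements at every regularity and weight simultaneously, with a single (data-independent) choice of the boundary cutoff $\bar{b}_{0}$. The only point worth verifying is that the constants $C_{i}'$ appearing in the iterative estimate in the proof of Corollary~\ref{cor:i1} remain finite at each fixed $k$ — but this is immediate from that proof, since each constant depends only on $N$, $k$, $\bar{b}_{0}$, and the (finite) norm of the data in the corresponding weighted $b$-Sobolev space.
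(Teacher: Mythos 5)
Your proposal is correct and matches the paper's (implicit) argument: the paper states Corollary~\ref{cor:i2} without proof as an immediate consequence of Corollary~\ref{cor:i1}, precisely because Schwartz-type data lies in every weighted space $\bar{\rho}^{\lambda+1}H^{N+1}_{b}\times\bar{\rho}^{\lambda+2}H^{N}_{b}$ and the region $\bar{b}<\bar{b}_{0}$ depends only on $r_{0}$, not on $k$ or $N$. Your bookkeeping of the parameters $\lambda=k+1/2$, $N=k+3$ is consistent with the hypotheses of Corollary~\ref{cor:i1}, so nothing further is needed.
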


The coordinate system given by $(\bar{a},\bar{b},\omega)$ valid  only on half of null
infinity, i.e. $\bar{\tau}<0$.  To extend Proposition~\ref{prop:i2} to the other half, we
may use similar energy estimates in the $(\bar{\tau}, \bar{\rho}, \omega)$
coordinate system with $\bar{\tau} \in (-\bar{\tau}_{0},\bar{\tau}_{0})$ for arbitrarily
large $\bar{\tau}_{0}$.  

In the coordinate system $(\bar{\tau}, \bar{\rho}, \omega)$, we choose a pair of
time-like functions $(\bar{T}_{2},\bar{T}_{2}')$ for $\bar{\tau} \in (-\bar{\tau}_{0},\bar{\tau}_{0})$
as follows: 
$$\bar{T}_{2}=\bar{T}, \quad \bar{T}_{2}'= -\bar{\rho} (2\bar{\tau}_{0}-\tau),$$
Here $\bar{T}_{2}'$ is asymptotically null when
approaching null infinity 
$$\langle \grad \bar{T}_{2}' , \grad \bar{T}_{2}' \rangle_{\tilde{g}_{\infty}} =
  -\bar{\rho} (2\bar{\tau}_{0} - \bar{\tau}) \left( 2 - (1-2M\bar{\rho}) \bar{\rho}
    (2\bar{\tau}_{0}-\bar{\tau})\right) \leq 0.$$
and  the vector field $\mathcal{F}$ is
\begin{align*}
  & \langle \mathcal{F}_{\tilde{g}_{\infty}}(\bar{T}_{2},v),\grad
  \bar{T}_{2}'\rangle_{\tilde{g}_{\infty}} \\
  &\quad\quad=  \left( 1 - \frac{1}{2}( \bar{\rho} + 2\bar{\tau}_{0} -\bar{\tau})
    (1-2M \bar{\rho}) \bar{\rho}\right)  \bar{\rho}\left| \pd[ \bar{\rho}]v\right|^{2}\\
    &\quad\quad\quad
    + \frac{1}{2} (2\bar{\tau}_{0}-\bar{\tau})
  \left( \left| (1-2M\bar{\rho})\bar{\rho}^{2}\pd[\bar{\rho}]v + \pd[\bar{\tau}]v\right|^{2} +
    \left| \pd[\bar{\tau}]\right|^{2}\right) \\
    &\quad\quad\quad
      + \frac{1}{2} \left( 2\bar{\tau}_{0} - \bar{\tau} + \bar{\rho} -
    (1-2M\bar{\rho})(2\bar{\tau}_{0} - \bar{\tau})\bar{\rho}^{2} \right) \left( \left| \grad
      _{\omega}v\right|^{2} + v^{2}\right).
\end{align*}

We use the pair $(\bar{T}_{2},\bar{T}_{2}')$ of time-like functions to obtain
estimates similar to those in Propositions~\ref{prop:e2} and \ref{prop:i2}:
\begin{proposition}
  \label{prop:i3}
  Suppose $r_{0} > 2M$ is close to $2M$ so that $\bar{\tau} _{0} = -(r_{0} +
  2M \log (r_{0}-2M)) > 0$.  If the initial data is in the following space
  \begin{equation*}
    \bar{\rho}^{\lambda+1}H_{b}^{N+1}\left(\left[0,\frac{1}{r_0}\right)_{\bar{\rho}} \times \sphere^{2},
      \frac{d\bar{\rho}\domega}{\bar{\rho}}\right) \times \bar{\rho}^{\lambda + 2}
    H^{N}_{b} \left( \left[0,\frac{1}{r_0}\right)_{\bar{\rho}}\times \sphere^{2} ,
      \frac{d\bar{\rho}\domega}{\bar{\rho}}\right) 
  \end{equation*}
  with $N > 2$ and $\lambda > 0$, then $\tilde{u}$ is $C^{\delta}$ up
  to $\{\bar{\rho} = 0\}$ for all $\bar{\tau} \in (-\infty, \bar{\tau}_{0})$, where
  $\delta = \min \{ \lambda, \frac{1}{2}\}$.
\end{proposition}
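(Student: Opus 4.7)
The plan is to propagate the regularity provided by Proposition~\ref{prop:i2} forward in $\bar{\tau}$ up to the cutoff $\bar{\tau}_{0}$ by a second energy estimate in the interior coordinates $(\bar{\tau},\bar{\rho},\omega)$, using the time-like pair $(\bar{T}_{2},\bar{T}_{2}')$ just introduced. Proposition~\ref{prop:i2} already gives the $C^{\delta}$ bound on $\tilde{u}$ in the region where the $(\bar{a},\bar{b})$ coordinates are valid (equivalently, for $\bar{\tau}$ sufficiently negative), and in particular controls the traces $Z^{I}\tilde{u}|_{\bar{\tau}=\bar{\tau}_{1}}$ on a fixed slice $\{\bar{\tau}=\bar{\tau}_{1}\}$ lying well inside that region, for $Z\in\{\pd[\bar{\tau}],\bar{\rho}\pd[\bar{\rho}],Z_{ij}\}$ and $|I|\leq N$, in the natural weighted Sobolev norm corresponding to decay $\bar{\rho}^{\lambda}$ of $\tilde{u}$ at null infinity.

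Using this data as initial data on $\{\bar{\tau}=\bar{\tau}_{1}\}$, I set up the level-set energy
\begin{equation*}
  \bar{M}_{2}^{N}(\tilde{u},s;\lambda) = \Bigl(\sum_{|I|\leq N}\int_{\{\bar{T}_{2}'=s\}}\bar{\rho}^{-2\lambda}e^{-c\bar{T}_{2}'}\langle\mathcal{F}_{\tilde{g}_{\infty}}(\bar{T}_{2},Z^{I}\tilde{u}),\grad\bar{T}_{2}'\rangle_{\tilde{g}_{\infty}}\,d\mu_{\bar{T}_{2}'}\Bigr)^{1/2},
\end{equation*}
together with a companion lateral flux $\bar{L}_{2}^{N}$ on a surface $\{\bar{\rho}=\bar{\rho}_{0}\}$ (with $\bar{\rho}_{0}$ small enough that Proposition~\ref{prop:i2} already controls $\tilde{u}$ on this lateral face). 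Using the explicit commutators of $\tilde{\Box}_{\infty}$ with $\pd[\bar{\tau}]$, $\bar{\rho}\pd[\bar{\rho}]$, and $Z_{ij}$ recorded above, the equation $(\tilde{\Box}_{\infty}+\gamma_{\infty})\tilde{u}=0$, and the positivity of $\langle\mathcal{F}_{\tilde{g}_{\infty}}(\bar{T}_{2},v),\grad\bar{T}_{2}'\rangle_{\tilde{g}_{\infty}}$ just computed, I choose $c$ large enough (depending only on $\bar{\tau}_{0}$, $\bar{\rho}_{0}$, and $N$) so that
\begin{equation*}
  \operatorname{div}_{\tilde{g}_{\infty}}\bigl(\bar{\rho}^{-2\lambda}e^{-c\bar{T}_{2}'}\mathcal{F}_{\tilde{g}_{\infty}}(\bar{T}_{2},Z^{I}\tilde{u})\bigr)\leq 0
\end{equation*}
for every $|I|\leq N$.

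Stokes' theorem applied on the slab $\{s_{0}\leq \bar{T}_{2}'\leq s\}$ then yields, after differentiating in $s$, a Gr\"onwall-type differential inequality for $\bar{M}_{2}^{N}$ of exactly the same shape as in the proof of Proposition~\ref{prop:e2}. Integrating in $s$ and invoking the Sobolev embedding (permitted by $N>2$) gives the pointwise estimate on $\pd[\bar{\rho}]\tilde{u}$ from which $C^{\delta}$ regularity of $\tilde{u}$ up to $\{\bar{\rho}=0\}$ with $\delta=\min\{\lambda,1/2\}$ follows by integration in $\bar{\rho}$. Combining this with Proposition~\ref{prop:i2} on the part of the spacetime covered by the $(\bar{a},\bar{b})$ system extends the bound to all $\bar{\tau}\in(-\infty,\bar{\tau}_{0})$.

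The main obstacle is exactly the one already encountered in Propositions~\ref{prop:e2} and~\ref{prop:i2}: because $\bar{T}_{2}'$ is only asymptotically null at null infinity, the coefficient of $|\pd[\bar{\rho}]v|^{2}$ in $\langle\mathcal{F}_{\tilde{g}_{\infty}}(\bar{T}_{2},v),\grad\bar{T}_{2}'\rangle_{\tilde{g}_{\infty}}$ degenerates linearly in $\bar{\rho}$, while the $\mathcal{Q}$ term and the inner products arising from $\langle\grad\bar{T}_{2},\grad v\rangle_{\tilde{g}_{\infty}}$ contain non-degenerate $|\pd[\bar{\rho}]v|^{2}$ contributions. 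Controlling these against the degenerate energy by Cauchy--Schwarz costs a factor $\bar{\rho}^{-1/2}$, which is just integrable and caps the H\"older exponent at $\delta=1/2$; the additional negative power $(1-2\lambda)\bar{\rho}^{-1}$ produced by differentiating the weight $\bar{\rho}^{-2\lambda}$ when $\lambda<1/2$ further caps $\delta$ at $\lambda$ via the same Gr\"onwall argument used in the proof of Proposition~\ref{prop:e2}.
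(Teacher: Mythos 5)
Your overall skeleton --- start from Proposition~\ref{prop:i2}, continue into the region $\bar{\tau}\in(-\bar{\tau}_0,\bar{\tau}_0)$ with an energy estimate built from the pair $(\bar{T}_2,\bar{T}_2')$, apply Stokes, a Gr\"onwall inequality, and Sobolev embedding --- matches the paper's strategy. But the quantitative mechanism you propose is wrong, and the error is the weight $\bar{\rho}^{-2\lambda}$. First, the divergence identity cannot be closed: differentiating the weight produces the term $-2\lambda\langle\mathcal{F}_{\tilde{g}_{\infty}}(\bar{T}_2,Z^I\tilde{u}),\grad\log\bar{\rho}\rangle_{\tilde{g}_{\infty}}$, and since $\langle\grad\bar{T}_2,\grad\log\bar{\rho}\rangle_{\tilde{g}_{\infty}}=\bar{\rho}^{-1}\bigl(1-(1-2M\bar{\rho})\bar{\rho}^2\bigr)$, this contributes $+\lambda\bar{\rho}^{-1}\bigl(v^2+|\grad_\omega v|^2+\cdots\bigr)$ to the divergence. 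In $\langle\mathcal{F}_{\tilde{g}_{\infty}}(\bar{T}_2,v),\grad\bar{T}_2'\rangle_{\tilde{g}_{\infty}}$ the coefficient of $v^2+|\grad_\omega v|^2$ is bounded (of size $2\bar{\tau}_0-\bar{\tau}$), so no constant $c$ absorbs a $\bar{\rho}^{-1}$-sized positive term; this is unlike the corner region, where $\langle\mathcal{F}_{\tilde{g}_{e}}(T_1,v),\grad\log b\rangle_{\tilde{g}_{e}}\geq 0$ gives the weight term a good sign. Second, and more fundamentally, $\tilde{u}$ does \emph{not} decay like $\bar{\rho}^{\lambda}$ at the interior of $S_1^+$: its restriction there is (an antiderivative of) the radiation field, which is generically nonzero, so on $\Sigma_s$ (where $\bar{\rho}\sim -s$) one has $\bar{M}_2^N(\tilde{u},s)^2\gtrsim(-s)^{-2\lambda}$; for $\lambda>1/4$ this already contradicts the conclusion $\bar{M}_2^N\lesssim(-s)^{\lambda-1/2}$ of your Gr\"onwall argument (and for $\lambda\geq 1/2$ it contradicts boundedness). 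The exponent $\lambda$ in Propositions~\ref{prop:e2} and~\ref{prop:i2} records decay at the \emph{spatial} faces $E_0$, $S_0$ (weights $b^{-2\lambda}$, $\bar{b}^{-2\lambda}$), not at $E_1^+$ or $S_1^+$.

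The paper's actual mechanism is different: in the interior region the energy $M_2^N$ carries \emph{no} weight, and $c$ is chosen so that $\operatorname{div}_{\tilde{g}_{\infty}}\bigl(e^{-c\bar{T}_2}\mathcal{F}_{\tilde{g}_{\infty}}(\bar{T}_2,Z^I\tilde{u})\bigr)\leq 0$ outright. All of the $\lambda$-dependence then enters through the lateral flux $L_1^N$ across the interface $S=\{\bar{T}_2=-\bar{\tau}_0\}$ separating the corner region from the interior one: the estimates from the proof of Proposition~\ref{prop:i2} give $\pd[s]\bigl(L_1^N(\tilde{u},s)\bigr)^2\leq C(-s)^{2\lambda-2}$ for $\lambda<\tfrac12$ (and $C(-s)^{-1/2}$ for $\lambda\geq\tfrac12$), and Stokes' theorem on the slab bounded by $\Sigma_{s_0}$, $\Sigma_s$, $S$, $S'$ yields $\pd[s](M_2^N)^2\leq\pd[s](L_1^N)^2$, which integrates to $M_2^N(\tilde{u},s)\leq C'\bigl(1+(-s)^{\lambda-1/2}\bigr)$. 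Your choice of lateral face $\{\bar{\rho}=\bar{\rho}_0\}$ is also not ideal --- it is a timelike hypersurface, so the flux through it has no sign --- but that is secondary; the essential missing idea is that the growth of the interior energy must be sourced by the influx through the spacelike interface with the corner region, not by a $\bar{\rho}$-weight.
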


\begin{proof}
  For $\bar{\tau} \leq  -\bar{\tau}_{0}$, i.e.,
  $\bar{b}\leq \bar{b}_{0}$, the statement is proved in Proposition~\ref{prop:i2}.
  Let $\Omega$ be the domain bounded by $\{t=0\}$, $S_1^+$, and
  \begin{equation*}
    S = \{ \bar{T}_{2} = \log \bar{b}_{0} \}=\{\bar{T}_2=-\bar{\tau}_0\}, \quad S' = \{ \bar{T}_{2}= \bar{\tau}_{0}\}.
  \end{equation*}
  Let $\Sigma_{s} = \{ \bar{T}_{2}' = s\}$, which is space-like for $s < 0$
  and approaches null infinity as $s \to 0$.  We define the quantities
  \begin{align*}
    &M_{2}^{N}(\tilde{u},s) = \left(\sum_{|I|\leq N}
      \int_{\Sigma_{s}\cap \Omega}e^{-c\bar{T}_{2}}\langle
      \mathcal{F}_{\tilde{g}_{\infty}}(\bar{T}_{2},Z^{I}\tilde{u}, \grad
      \bar{T}_{2}' \rangle_{\tilde{g}_{\infty}} \dmu_{\bar{T}_{2}'}
    \right)^{\frac{1}{2}}, \\
    &L_{1}^{N}(\tilde{u},s) = \left( \sum_{|I|\leq N} \int _{S\cap
        \{\bar{T}_{2}'<s\}} e^{-c\bar{T}_{2}}\langle
      \mathcal{F}_{\tilde{g}_{\infty}}(\bar{T}_{2},Z^{I}\tilde{u}),\grad
      \bar{T}_{1}\rangle _{\tilde{g}_{\infty}} \dmu_{\bar{T}_{1}}\right)
    ^{\frac{1}{2}}, \\
    &L_{2}^{N}(\tilde{u},s) = \left( \sum_{|I|\leq N} \int_{S'\cap
        \{\bar{T}_{2}' < s\}} e^{-c\bar{T}_{2}}\langle
      \mathcal{F}_{\tilde{g}_{\infty}}(\bar{T}_{2},Z^{I}\tilde{u}), \grad
      \bar{T}_{2}\rangle_{\tilde{g}_{\infty}}\right)^{\frac{1}{2}}, 
  \end{align*}
  where $Z^{I}\in \{ \rho \pd[\rho], \pd[\tau], Z_{ij}\}$.  We choose
  $c$ large enough so that
  \begin{equation*}
    \sum_{|I|\leq N} \operatorname{div}_{\tilde{g}_{\infty}}\left(
      e^{-c\bar{T}_{2}}\mathcal{F}_{\tilde{g}_{\infty}} (\bar{T}_{2},
      Z^{I}\tilde{u}) \right)\leq 0.
  \end{equation*}
  Note that this is possible because $\tilde{u}$ solves the equation
  $\left( \tilde{\Box}_{\infty}+ \gamma_{\infty}\right)\tilde{u} = 0$.  Here
  $c$ depends on $\tau_{0}$ and $N$.
  
  We now choose $s_{0}< 0$ small enough so that $\Sigma_{s_{0}} \cap
  \{ t=0\} \cap \Omega = \emptyset$.  By a proof similar to the one in
  Proposition~\ref{prop:e2}, we have that
  \begin{equation*}
    M_{2}^{N}(\tilde{u},s_{0}) < C_{N},
  \end{equation*}
  where $C_{N}$ is bounded by the initial data norm and depends on
  $s_{0}$.  Moreover, the bound in the proof of
  Proposition~\ref{prop:i2} implies that
  \begin{equation*}
    \pd[s]\left( L_{1}^{N}(\tilde{u},s)\right)^{2} \leq
    \begin{cases}
      C(-s)^{2\lambda-2} & \lambda < \frac{1}{2} \\
      C(-s)^{-\frac{1}{2}} & \lambda \geq \frac{1}{2}
    \end{cases}
  \end{equation*}
  for $s > s_{0}$. 
  Here $C$ depends only on the norm of the initial data, $\bar{\tau}_{0}$,
  and $s_{0}$.

  Consider now the domain $\Omega_{s_{0}}^{s}$ bounded
  by $\Sigma_{s_{0}}$, $\Sigma_{s}$, $S$, and $S'$, for $s> s_{0}$.  By Stokes'
  theorem,
  \begin{align*}
    &\left( M_{2}^{N}(\tilde{u},s)\right)^{2} - \left(
      M_{2}^{N}(\tilde{u},s_{0})\right)^{2} + \left(
      L_{2}^{N}(\tilde{u},s)\right)^{2} - \left(
      L_{2}^{N}(\tilde{u},s_{0})\right)^{2}\\ 
    &\quad\quad \leq \left(
      L_{1}^{N}(\tilde{u},s)\right)^{2} - \left(
      L_{1}^{N}(\tilde{u},s_{0})\right)^{2}. 
  \end{align*}
  Dividing by $s-s_{0}$ and taking a limit then implies that
  \begin{equation*}
    \pd[s]\left( M_{2}^{N}(\tilde{u},s)\right)^{2} \leq \pd[s]\left(
      L_{1}^{N}(\tilde{u},s)\right)^{2} .
  \end{equation*}
  Using the bound above and integrating then shows that, for $s >
  s_{0}$, 
  \begin{equation*}
    M_{2}^{N}(\tilde{u},s) \leq C'\left( 1 + (-s)^{\lambda - \frac{1}{2}}\right).
  \end{equation*}
  An application of Sobolev embedding then shows that $\tilde{u}$ is
  $C^{\delta}$ up to $\{ \bar{\rho} = 0\}$ for $\bar{\tau} \in (-\bar{\tau}_{0},\bar{\tau}_{0})$.
\end{proof}

Similarly, Corollaries~\ref{cor:i1} and \ref{cor:i2} may be extended
to the entire null infinity.
\begin{corollary}
  Suppose $\lambda = k + \alpha$ for an integer $k$ and $\alpha \in
  (0,1]$.  If the initial data $(\phi, \psi)$ is in the space
  \begin{equation*}
     \bar{\rho}^{\lambda+1}H^{N+1}_{b}\left( \left[
        0,\frac{1}{r_{0}}\right)_{\bar{\rho}} \times \sphere^{2} ,
      \frac{d\bar{\rho}\domega}{\bar{\rho}}\right) \times 
     \bar{\rho}^{\lambda
      +2}H^{N}_{b}\left( \left[ 0, \frac{1}{r_{0}}\right)_{\bar{\rho}} \times
      \sphere^{2} , \frac{d\bar{\rho}\domega}{\bar{\rho}}\right)
  \end{equation*}
  with $r_0+2M\log(r_0-2M)<0$ and $N>2+k$, then $\tilde{u}$ is $C^{k,\delta}$ up to $\{\bar{\rho} = 0\}$ for
  $\bar{\tau} < \bar{\tau}_{0} = -(2r_{0} + 2M\log (r_{0}-2M))$.  Here $\delta =
  \min\{\alpha, \frac{1}{2}\}$.
\end{corollary}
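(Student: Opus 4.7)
The plan is to argue by induction on $k$, combining Corollary~\ref{cor:i1} with an extension of Proposition~\ref{prop:i3} to higher $\bar{\rho}$-derivatives, exactly as Corollary~\ref{cor:e1} extends Proposition~\ref{prop:e2} at the event horizon. The case $k=0$ is Proposition~\ref{prop:i3} itself, and Corollary~\ref{cor:i1} (applied with an auxiliary $r_{0}'$ for which $r_{0}'+2M\log(r_{0}'-2M)>0$) already yields the $C^{k,\delta}$ bound in the region $\bar{b}<\bar{b}_{0}'$, equivalently for sufficiently negative $\bar{\tau}$, via the $(\bar{a},\bar{b},\omega)$ chart. The remaining task is therefore to propagate $C^{k,\delta}$-regularity across the null surface $S=\{\bar{T}_{2}=-\bar{\tau}_{0}\}$ into the rest of the region $\bar{\tau}<\bar{\tau}_{0}$.

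The key algebraic input is a commutator identity
\begin{equation*}
  \left[\tilde{\Box}_{\infty},\pd[\bar{\rho}]^{k}\right]
  = \sum_{|I|+i\leq k+1,\ i\leq k} c_{I,i}(\bar{\rho})\,Z^{I}\pd[\bar{\rho}]^{i},\qquad Z\in\{\bar{\rho}\pd[\bar{\rho}],\pd[\bar{\tau}],Z_{ij}\},
\end{equation*}
with smooth coefficients $c_{I,i}$, which I would derive by iterating the expression for $[\tilde{\Box}_{\infty},\pd[\bar{\rho}]]$ already recorded in Section~\ref{sec:near-future-null} together with $[\bar{\rho}\pd[\bar{\rho}],\pd[\bar{\rho}]]=-\pd[\bar{\rho}]$. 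Setting $v_{i}:=\pd[\bar{\rho}]^{i}\tilde{u}$, the equation $(\tilde{\Box}_{\infty}+\gamma_{\infty})\tilde{u}=0$ then yields a triangular hierarchy
\begin{equation*}
  (\tilde{\Box}_{\infty}+\gamma_{\infty})v_{k}=\mathcal{S}_{k}[v_{0},\ldots,v_{k-1}],
\end{equation*}
where $\mathcal{S}_{k}$ involves at most two tangential derivatives of the lower-order $v_{j}$'s with smooth coefficients and is therefore a known source term by the inductive hypothesis.

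Following the scheme of Proposition~\ref{prop:i3}, I would then introduce
\begin{equation*}
  \widetilde{M}_{2}^{N}(\tilde{u},s)
  =\Bigl(\sum_{i\leq k}\bigl(M_{2}^{N-i}(\pd[\bar{\rho}]^{i}\tilde{u},s)\bigr)^{2}\Bigr)^{1/2},
\end{equation*}
with analogous quantities $\widetilde{L}_{1}^{N}$ and $\widetilde{L}_{2}^{N}$, and apply Stokes' theorem on the domain $\Omega_{s_{0}}^{s}$ with the time-like pair $(\bar{T}_{2},\bar{T}_{2}')$. Corollary~\ref{cor:i1} controls $\widetilde{L}_{1}^{N}$ on $S$ and $\widetilde{M}_{2}^{N}(\tilde{u},s_{0})$ on $\Sigma_{s_{0}}$ by the initial data norm. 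Choosing $c$ large enough to absorb both the $\gamma_{\infty}$-term and the source $\mathcal{S}_{k}$ into the multiplier, integration in $s$ will yield $\widetilde{M}_{2}^{N}(\tilde{u},s)\leq C(1+(-s)^{\alpha-1/2})$ for $s>s_{0}$, and Sobolev embedding with $N>2+k$ then upgrades this to the claimed pointwise $C^{k,\delta}$ bound with $\delta=\min\{\alpha,1/2\}$.

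The main obstacle will be bookkeeping the weights in the source term $\mathcal{S}_{k}$: its contribution to the energy inequality must be integrable against the multiplier without degrading the H\"older exponent past $\min\{\alpha,1/2\}$. The saturation at $1/2$ is the familiar loss from the cross term $(\pd[\bar{\rho}]v)(\bar{b}\pd[\bar{b}]v)$ appearing in $\mathcal{Q}_{\tilde{g}_{\infty}}$, which was already absorbed by the Gr\"onwall step in Proposition~\ref{prop:i3}; the same mechanism handles it here, so no new ideas are needed beyond the inductive structure.
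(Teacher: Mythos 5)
Your proposal is correct and follows essentially the same route as the paper's proof: the same commutator identity $[\tilde{\Box}_{\infty},\pd[\bar{\rho}]^{k}]=\sum_{|I|+i\leq k+1,\,i\leq k}c_{I,i}Z^{I}\pd[\bar{\rho}]^{i}$, the same summed quantities $\widetilde{M}_{2}^{N}$, $\widetilde{L}_{1}^{N}$, $\widetilde{L}_{2}^{N}$, control of the flux through $S$ by Corollary~\ref{cor:i1}, Stokes' theorem with the pair $(\bar{T}_{2},\bar{T}_{2}')$, and Sobolev embedding with $N>2+k$. No substantive differences to report.
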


\begin{proof}
  For $\bar{\tau} \leq - \bar{\tau}_{0}$, this reduces to Corollary~\ref{cor:i1}.
  We now define:
  \begin{align*}
     &\tilde{M}_{2}^{N}(\tilde{u},s ;\lambda) = \left( \sum_{i\leq k}
      \left(
        M_{2}^{N-1}(\pd[\bar{\rho}]^{i}\tilde{u},s;\lambda)\right)^{2}\right)^{\frac{1}{2}},
    \\
     &\tilde{L}_{1}^{N}(\tilde{u},s; \lambda) = \left( \sum_{i\leq k}
      \left(
        L_{1}^{N-i}(\pd[\bar{\rho}]^{i}\tilde{u},s;\lambda)\right)^{2}\right)^{\frac{1}{2}},
    \\ 
     &\tilde{L}_{2}^{N}(\tilde{u},s;\lambda) = \left( \sum_{i\leq k}
      \left( L_{2}^{N-i}(\pd[\bar{\rho}]^{i}\tilde{u},s;\lambda)\right)^{2}\right)^{\frac{1}{2}}.
  \end{align*}
  Notice that
  \begin{equation*}
    \left[ \tilde{\Box}_{\infty} , \pd[\bar{\rho}]^{k} \right] =
    \sum_{|I|+i\leq k+1,\,i\leq k} c_{I,i}Z^{I}\pd[\bar{\rho}]^{i},
  \end{equation*}
  where $Z\in \{a\pd[a],b\pd[b],Z_{ij}\}$ and $c_{I,i}$ are smooth
  coefficients.  By a proof similar to that of Corollary~\ref{cor:e1},
  we have:
  \begin{align*}
    &\pd[s]\left( \tilde{M}^{N}_{2}(\tilde{u},s;\lambda)\right)^{2} +
       \pd[s]\left( \tilde{L}_{2}^{N}(\tilde{u},s;\lambda)\right)^{2}\\
    &\quad\quad \leq \pd[s]\left(
      \tilde{L}_{1}^{N}(\tilde{u},s;\lambda)\right)^{2}
    \leq
    \begin{cases}
      C\left( -s \right)^{2\alpha -2}, & \alpha < \frac{1}{2} \\
      C\left( -s \right)^{-\frac{1}{2}}, & \alpha \geq \frac{1}{2}
    \end{cases}.
  \end{align*}
  We thus have that $\pd[\bar{\rho}]^{k}\tilde{u}$ is $C^{\delta}$ up to $\{\bar{\rho}
  = 0\}$ for $\bar{\tau} < \bar{\tau}_{0}$.  
\end{proof}

\begin{corollary}
  \label{cor:i4}
  If the initial data are ``Schwartz'', i.e., $(\phi, \psi)$ lie in
  \begin{equation*}
      \bar{\rho}^{\infty}H^{\infty}_{b}\left( \left[0,
        \frac{1}{r_{0}}\right)_{\bar{\rho}}\times \sphere^{2} ,
      \frac{d\bar{\rho}\domega}{\bar{\rho}}\right) \times 
      \bar{\rho}^{\infty}H^{\infty}_{b} \left( \left[ 0, \frac{1}{r_{0}}\right)_{\bar{\rho}} \times
      \sphere^{2}, \frac{d\bar{\rho}\domega}{\bar{\rho}}\right),
  \end{equation*}
 with $r_0+2M\log(r_0-2M)<0$, then $\tilde{u}$ is smooth up to $\{ \bar{\rho}= 0\}$ for $\bar{\tau}< \bar{\tau}_{0} = -
  \left( 2r_{0} + 2M\log(r_{0}-2M)\right)$.
\end{corollary}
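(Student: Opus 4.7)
The plan is to reduce Corollary \ref{cor:i4} to the immediately preceding corollary by exhausting the Schwartz decay and regularity assumption. The previous corollary asserts that if $(\phi,\psi) \in \bar\rho^{\lambda+1}H^{N+1}_b \times \bar\rho^{\lambda+2}H^N_b$ with $\lambda = k+\alpha$, $\alpha \in (0,1]$, and $N > 2+k$, then $\tilde u$ is $C^{k,\delta}$ up to $\{\bar\rho = 0\}$ on the set $\bar\tau < \bar\tau_0 = -(2r_0 + 2M\log(r_0-2M))$, with $\delta = \min\{\alpha,1/2\}$. The point of Corollary \ref{cor:i4} is simply that the regularity index $k$ and the weight $\lambda$ can both be taken arbitrarily large when the data is Schwartz, and hence $\tilde u$ is smooth.

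First I would observe that by definition of the space $\bar\rho^\infty H^\infty_b$, any Schwartz pair $(\phi,\psi)$ belongs to the weighted Sobolev spaces appearing in the previous corollary for \emph{every} choice of $\lambda > 0$ and every integer $N$. In particular, for each integer $k \geq 0$, we may pick $\lambda = k + 1$ (so $\alpha = 1$, $\delta = 1/2$) and any $N > 2 + k$, and then the previous corollary immediately yields that $\tilde u$ is $C^{k,1/2}$ up to $\{\bar\rho = 0\}$ on the region $\bar\tau < \bar\tau_0$. Since $k$ is arbitrary, $\tilde u$ has derivatives of all orders extending continuously to $\{\bar\rho = 0\}$ on this region, which is precisely smoothness up to the boundary.

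The only subtlety worth checking is that the domain of validity $\bar\tau < \bar\tau_0$ does not shrink as $k$ grows; but the bound $\bar\tau_0 = -(2r_0 + 2M\log(r_0-2M))$ is determined by the support/decay assumption on the data through $r_0$ (via the time-like function $\bar T_2'$ used in Proposition \ref{prop:i3}) and not by the number of derivatives being estimated, so the region is the same for every $k$. There is therefore no obstacle: the argument is purely a routing step that converts finite-order estimates, uniform in $k$, into $C^\infty$ regularity. In short, the proof reads: apply the previous corollary with $\lambda = k+1$ and $N = k+3$ for each $k \in \mathbb{N}_0$; smoothness up to $\{\bar\rho = 0\}$ on $\{\bar\tau < \bar\tau_0\}$ follows.
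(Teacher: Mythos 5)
Your proposal is correct and is essentially the argument the paper intends: the paper states this corollary without proof, treating it as an immediate consequence of the preceding corollary, and your reduction (Schwartz data lies in every weighted space $\bar{\rho}^{\lambda+1}H^{N+1}_{b}\times\bar{\rho}^{\lambda+2}H^{N}_{b}$, so take $\lambda=k+1$, $N=k+3$ for each $k$ and note that the region $\bar{\tau}<\bar{\tau}_{0}$ depends only on $r_{0}$) is exactly that routing step. Your observation that the domain of validity is independent of $k$ is the right thing to check and is correctly resolved.
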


Suppose now that the initial data has a classical asymptotic expansion
at $S_{0}$ (spatial infinity), i.e.,
\begin{equation*}
  (\phi, \psi) \in \bar{\rho}^{\lambda+1}C^{\infty}\left( \left[ 0,
      \frac{1}{r_{0}}\right)_{\bar{\rho}}\times \sphere^{2}\right) \times
  \bar{\rho}^{\lambda+2}C^{\infty}\left( \left[ 0 , \frac{1}{r_{0}}\right)_{\bar{\rho}}
    \times \sphere^{2}\right).
\end{equation*}
with $r_0+2M\log(r_0-2M)<0$. Define the ``k-th term'' $w^{k}$ as follows:
\begin{equation}
\label{kthterm.nullinfinity}
 w^{0} = \tilde{u} =\bar{\rho}^{-1}u, \quad
 w^{k} = \prod_{i=0}^{k-1}\left(
  \bar{b}\partial_{\bar{b}}- \lambda -i\right)^{n_i}\tilde{u}\quad \mathrm{for}\quad k\geq 1,
\end{equation} 
where $n_i$ are integers that only depend on $i$ such that
$$(Z^Iw^k)|_{t=0}\in \sum_{j=1}^{N(k,I)}\bar{\rho}^{\lambda+k}(\log\bar{\rho})^jC^{\infty}\left( \left[ 0,
      \frac{1}{r_{0}}\right)_{\bar{\rho}}\times \sphere^{2}\right)$$
for $Z\in\{\bar{a}\pd[\bar{a}], \bar{b}\pd[\bar{b}], Z^{ij}\}$, multi-index $I$ and integers $N(k,I)$. 
Here the integers $n_i$ and $N(k, I)$ can be calculated according to the lift of 
$\bar{a}\pd[\bar{a}], \bar{b}\pd[\bar{b}]$:
\begin{align*}
&\bar{a}\pd[\bar{a}]=\bar{\rho}\partial_{\bar{\rho}}-\frac{1}{\bar{\rho}(1-2M\bar{\rho})}\pd[t],
\\
&\bar{b}\partial_{\bar{b}}=\bar{\rho}\partial_{\bar{\rho}}-2M\left(\log\bar{\rho}+\frac{1}{1-2M\bar{\rho}}
-\log(1-2M\bar{\rho})\right)\partial_t-t\partial_t.
\end{align*}

\begin{proposition}
  \label{prop:i4}
  If $\lambda + k = l + \alpha$ for some integer $l$ and $\alpha \in
  (0,1]$, then $w^{k}$ is $C^{l,\delta}$ up to $\{ \bar{\rho} = 0\}$ for all
  $\bar{\tau} < \bar{\tau}_{0} = - \left( 2r_{0} + 2M\log(r_{0}-2M)\right)$, where
  $\delta = \min\{\frac{1}{2}, \alpha - \epsilon\}$ with $\epsilon > 0$ arbitrarily small.
\end{proposition}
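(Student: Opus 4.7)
The plan is to mirror the proof of Proposition~\ref{prop:e3}, using the energy estimates of Proposition~\ref{prop:i3} at null infinity in place of those at the event horizon. The argument is an induction on $k$: one shows that $w^{k}$ satisfies a wave equation whose inhomogeneity is a differential expression in the lower-order terms $w^{j}$ with $j \leq k-1$, and then one propagates a weighted energy estimate with slowly-shifted weight $\lambda_{i} = \lambda + i - \epsilon_{i}$ for an increasing sequence $0 < \epsilon_{0} < \cdots < \epsilon_{k} = \epsilon$.

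First, iterating the commutator identity
\begin{equation*}
  [\tilde{\Box}_{\infty}, \bar{b}\pd[\bar{b}]] = 2M\bar{a}\bar{b}(\bar{a}\pd[\bar{a}])^{2} + 4M\bar{a}\bar{b}(\bar{a}\pd[\bar{a}])
\end{equation*}
recorded in Section~\ref{sec:near-future-null}, together with $(\tilde{\Box}_{\infty}+\gamma_{\infty})\tilde{u}=0$, produces an expression of the form
\begin{equation*}
  \tilde{\Box}_{\infty}w^{k} = \bar{a}\bar{b}\sum_{j\leq k-1}\sum_{|I|\leq 2}c_{I,j}\,Z^{I}w^{j}
\end{equation*}
with smooth coefficients $c_{I,j}$ and $Z\in\{\bar{a}\pd[\bar{a}], \bar{b}\pd[\bar{b}], Z_{ij}\}$. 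The factor $\bar{a}\bar{b}$ is what allows this source to be absorbed when estimating $w^{k}$ at the next weight $\lambda_{k}$.

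Next one verifies the hypothesis of Proposition~\ref{prop:i3} applied to $w^{k}$. Because the Cauchy surface $\{t=0\}$ meets $S_{0}$ only up to a logarithmic correction in $(\bar{a},\bar{b})$ (see Section~\ref{sec:part-comp}), the displayed lift formulas for $\bar{a}\pd[\bar{a}]$ and $\bar{b}\pd[\bar{b}]$ show that naive iteration of $\bar{b}\pd[\bar{b}]$ on classical initial data produces finitely many $\bar{\rho}^{\lambda+i}(\log\bar{\rho})^{j}$ terms at $t=0$. The integers $n_{i}$ in the definition~\eqref{kthterm.nullinfinity} are chosen precisely so that applying $(\bar{b}\pd[\bar{b}]-\lambda-i)^{n_{i}}$ eliminates the logarithmic factor at order $i$; consequently $(Z^{I}w^{k})|_{t=0}$ lies in the weighted $b$-Sobolev space of Proposition~\ref{prop:i3} with weight $\lambda+k-\epsilon$ for any $\epsilon>0$.

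Granted these two ingredients, the induction closes exactly as in Proposition~\ref{prop:e3}: the inductive bound $M_{2}^{N-c(i)}(w^{i},s;\lambda_{i}) \leq C_{i}'(1+(-s)^{\lambda_{i}-1/2})$ for $i\leq k-1$ controls the source term in the energy inequality for $w^{k}$, yielding the same bound at level $k$ with weight $\lambda_{k}=\lambda+k-\epsilon$. Inserting $\pd[\bar{\rho}]^{l}$ as in Corollary~\ref{cor:i1} and applying Sobolev embedding then gives the claimed $C^{l,\delta}$ regularity with $\delta=\min\{\alpha-\epsilon, 1/2\}$. The main obstacle, and what distinguishes this from the event-horizon argument, is the careful bookkeeping of the logarithmic corrections in the initial data: one must choose the $n_{i}$ so that the growth of the log power under repeated differentiation is precisely matched, and the source-term estimate must retain the $\bar{a}\bar{b}$ factor that powers the weight shift.
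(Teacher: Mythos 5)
Your proposal takes essentially the same route as the paper, whose proof simply invokes the argument of Proposition~\ref{prop:e3} in the region $\bar{\tau}<-\bar{\tau}_{0}$ and then the two-region extension mechanism of Proposition~\ref{prop:i3}; your fleshed-out version of the induction (the $\bar{a}\bar{b}$ factor in the commutator source, the shifted weights $\lambda_{i}$, and the role of the $n_{i}$ in taming the logarithmic corrections at $t=0$) is consistent with that. One small imprecision: the $n_{i}$ do not eliminate the logarithms on the initial surface --- the paper only arranges $(Z^{I}w^{k})|_{t=0}\in\sum_{j}\bar{\rho}^{\lambda+k}(\log\bar{\rho})^{j}C^{\infty}$ --- but since such terms lie in $\bar{\rho}^{\lambda+k-\epsilon}H^{\infty}_{b}$ for every $\epsilon>0$, your conclusion and the $\epsilon$-loss in $\delta$ are unaffected.
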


\begin{proof}
  We apply a similar proof to that of Proposition~\ref{prop:e3} for
  $\bar{\tau} < -\bar{\tau}_{0}$, i.e. for $\bar{b} < \bar{\tau}_{0}^{-1}$, and then extend
  it to $\bar{\tau} < \bar{\tau}_{0}$ by a method similar to the method in
  Proposition~\ref{prop:i3}.  
\end{proof}

\section{The radiation field}
\label{sec:extens-energy-space}

In this section we define the radiation field, show it extends to an
energy space, and then show it is unitary (i.e., norm-preserving) on
this space.

Consider now smooth initial data $(\phi , \psi)$ supported in $r \geq
2M+\epsilon$ and $r \leq R_{0}$, and let $u$ be the solution of $\Box_{S}
u =0$ with this initial data.  By the results of the last section, $u$
restricts to the event horizon and $ ru$ restricts to null
infinity.  
\begin{definition}
  The \emph{forward radiation field of $(\phi, \psi)$ at the event
    horizon} is the restriction of $\pd[t]u$ to the event horizon.  In
  the coordinates $(\tau, \rho, \omega)$ of
  Section~\ref{sec:event-horizon}, it is given by
  \begin{equation*}
    \mathcal{R}_{\horiz}(\phi, \psi) (\tau, \omega) =
    \pd[t]u(\tau, \rho, \omega) |_{\rho = 0}.
  \end{equation*}
  
  The \emph{forward radiation field of $(\phi, \psi)$ at null
    infinity} is the restriction of $\pd[t]v$ to null infinity.  In
  the coordinates $(\bar{\tau}, \bar{\rho}, \omega)$ of
  Section~\ref{sec:near-future-null}, it is given by
  \begin{equation*}
    \mathcal{R}_{\scri}(\phi, \psi) (\bar{\tau}, \omega) =
    \bar{\rho}^{-1}\pd[t]u (\bar{\tau}, \bar{\rho}, \omega)|_{\bar{\rho} = 0}.
  \end{equation*}
\end{definition}
Finite speed of propagation implies that
$\mathcal{R}_{\horiz}(\phi,\psi) (\tau, \omega)$ vanishes identically
for $\tau \leq 2M + \epsilon + 2M\log \epsilon$ and that
$\mathcal{R}_{\scri}(\phi,\psi)(\bar{\tau},\omega)$ vanishes
identically for $\bar{\tau}\leq - \left( R_{0} + 2M \log ( R_{0} -
  2M)\right)$.

The existence of the static Killing field $\pd[t]$ implies that the energy
\begin{equation*}
  E(t) = \frac{1}{2}\int_{\sphere^{2}}\int_{2M}^{\infty} e(t)r^{2}\dr\domega
\end{equation*}
is conserved, where
\begin{equation*}
e(t)=  \left( 1 -
      \frac{2M}{r}\right)^{-1}\left( \pd[t]u\right)^{2} + \left( 1 -
      \frac{2M}{r}\right) \left( \pd[r]u\right)^{2} +
    \frac{1}{r^{2}}\left| \grad_{\omega}u\right|^{2}.
\end{equation*}
For a fixed $\lambda$, let us write
\begin{align*}
  E(t) = &\frac{1}{2}\int_{\sphere^{2}}\int_{-r-2M\log (r-2M) \geq t - \lambda} e(t) r^{2}\dr\domega \\
  	&+ \frac{1}{2}\int_{\sphere^{2}}\int_{\lambda-t\leq r + 2M\log (r-2M) \leq t-\lambda} e(t) r^{2}\dr\domega \\
  &+ \frac{1}{2}\int_{\sphere^{2}}\int_{r + 2M\log (r-2M) \geq t -\lambda} e(t) r^{2}\dr\domega \\
  =& \text{I} + \text{II} + \text{III}.
\end{align*}

Let $f_{\horiz} = \mathcal{R}_{\horiz}(\phi, \psi)(\tau,\omega)$ and
$f_{\scri} = \mathcal{R}_{\scri}(\phi,\psi)(\bar{\tau},\omega)$.

We now compute term $\text{I}$ in the $(\tau,\rho, \omega)$ coordinates
of Section~\ref{sec:event-horizon}. Recall that near event horizon, 
$$\tau=t+r+2M\log(r-2M), \quad\rho=r-2M. $$ 
So $\pd[t]$ lifts to $\pd[\tau]$
and $\pd[r]$ lifts to $\pd[\rho] + \left( \rho+2M\right)\rho^{-1}\pd[\tau]$.  Moreover, $t$ is fixed, so we
are free to use $\tau$ as the variable of integration (so we think of
$r$ as a function of $\tau$) and then $r^2drd\omega=(\rho+2M)\rho d\tau d\omega$.  We then have that
\begin{align*}
  \text{I} &= \frac{1}{2}\int_{\sphere^{2}}\int _{\tau \leq \lambda}\left( 2 \left|\pd[\tau]u\right|^{2} 
  + \frac{2\rho\pd[\rho]u\pd[\tau]u}{(\rho+2M)} +
     \frac{\left| \rho\pd[\rho]u\right|^{2}}{(\rho+2M)^2}\right.\\
 &\quad\quad\quad\quad\quad\quad  \left.+
    \frac{\rho\left| \grad_{\omega}u\right|^{2}}{(\rho+2M)^{3}}\right)(\rho+2M)^{2}\dtau\domega.
\end{align*}
Because $\mathcal{R}_{\horiz}(\phi, \psi)$ vanishes for $\tau \leq
\tau _{0}$, the convergence of $\pd[\tau]u$ (together with its derivatives) to
$\mathcal{R}_{\horiz}(\phi,\psi)$ is uniform on $(-\infty, \lambda ]$, and so we may take a
limit as $t\to \infty$.  This implies that
\begin{equation*}
  \lim _{t\to \infty}\text{I} = 4M^{2}\int_{\sphere^{2}}\int
  _{-\infty}^{\lambda}f_{\horiz}^{2} \dtau\domega .
\end{equation*}

We then compute term $\text{III}$ in the $(\bar{\tau},\bar{\rho},\omega)$ 
coordinates of Section~\ref{sec:near-future-null}. Near future null infinity, 
$$\bar{\tau}=t-r-2M\log(r-2M),\quad \bar{\rho}=\frac{1}{r}.$$ 
So $\pd[t]$ lifts to $\pd[\tau]$ and $\pd[r]$
lifts to $- \bar{\rho}^{2}\pd[\bar{\rho}] - \left( 1 - 2M\bar{\rho}\right)^{-1}\pd[\bar{\tau}]$, and
the volume form $r^{2}\dr\domega$ becomes
$\bar{\rho}^{-4}d\bar{\rho}\domega$.  We are again free to use $\bar{\tau}$ as the
variable of integration, so that $d\bar{\tau} = \left( 1 - 2M\bar{\rho}\right)^{-1}
\bar{\rho}^{-2}d\bar{\rho}$.  
We then have that
\begin{align*}
  \text{III} &= \frac{1}{2} \int_{\sphere^{2}}\int _{\bar{\tau}\leq \lambda}
  \bigg( 2 \left|\bar{\rho}^{-1} \pd[\bar{\tau}] u\right|^{2}  +
 2(1-2M\bar{\rho})\pd[\bar{\rho}]u\pd[\bar{\tau}]u+(1-2M\bar{\rho})^2|\bar{\rho}\pd[\bar{\rho}]u|^2 
     \\ 
    &\quad\quad\quad\quad\quad\quad
        +(1-2M\bar{\rho})|\nabla_{\omega}u|^2 \bigg) d\bar{\tau}\domega .
\end{align*}
We now use the vanishing of $\mathcal{R}_{\scri}(\phi, \psi)$ for $\bar{\tau}
\leq \bar{\tau}_{0}$ to conclude that the convergence of $\bar{\rho}^{-1}\pd[\bar{\tau}]u$ (and its
derivatives) to $\mathcal{R}_{\scri}(\phi,\psi)$ 
is uniform on $(-\infty, \lambda]$.  This allows us to take a limit as
$t\to \infty$ and conclude that
\begin{equation*}
  \lim_{t\to\infty}\text{III} =
  \int_{\sphere^{2}}\int_{-\infty}^{\lambda}f_{S_1^+}^{2}d\bar{\tau}\domega .
\end{equation*}

Term $\text{II}$ is positive, so we may now take a limit as $\lambda
\to \infty$ to conclude that
\begin{align}
  \label{eq:energy-identity-unitary}
  E(0) &= 4M^{2}\norm[L^{2}(\reals\times\sphere^{2})]{f_{\horiz}}^{2}
  +
  \norm[L^{2}(\reals\times\sphere^{2})]{f_{\scri}}^{2} \\
  &\quad +\lim _{\lambda \to \infty}\lim_{t\to\infty}
  \frac{1}{2}\int_{\sphere^{2}}\int_{ \lambda-t\leq r + 2M\log (r-2M) \leq t-\lambda} 
       e(t)r^{2}\dr\domega .\notag
\end{align}

We define the energy space $\energyspace$ of initial data to be the
completion of smooth functions compactly supported in
$(2M,\infty)\times \sphere^{2}$ with respect to the energy norm
\begin{align*}
  \norm[\energyspace]{(\phi,\psi)}^{2} = &\frac{1}{2}\int_{\sphere^{2}}\int
  _{2M}^{\infty}\left[ \left( 1 -
      \frac{2M}{r}\right)^{-1}\psi^{2} + \left( 1 -
      \frac{2M}{r}\right) \left( \pd[r]\phi\right)^{2} \right.\\
    &\quad\quad\quad\quad \left. + \frac{1}{r^{2}}\left|
      \grad_{\omega}\phi\right|^{2}\right]r^{2}\dr\domega. 
\end{align*}
The above computation then shows that the radiation field extends to
a bounded operator $\energyspace\to L^{2}(\reals\times
\sphere^{2})\oplus L^{2}(\reals \times \sphere^{2})$.

We now show that $\mathcal{R}_{+}$ is unitary (but not necessarily
surjective), i.e., that the last term in
equation~\eqref{eq:energy-identity-unitary} vanishes.  We first show
this for compactly supported smooth initial data and then extend by
density to the full energy space.  Versions of this statement were
shown by Dimock~\cite{Dimock:1985} and Bachelot~\cite{Bachelot:1994},
though they work in a more abstract scattering framework.

The main tool we use to establish unitarity is the formulation of
Price's law proved by Tataru~\cite{Tataru:2013}, which roughly states
that waves decay at the order of $\langle t \rangle^{-3}$ on the
Schwarzschild exterior.  This formulation is somewhat stronger than
the standard formulation of Price's law near the event horizon.  In
particular, we refer to the formulation of
Metcalfe--Tataru--Tohaneanu~\cite{Metcalfe:2012}.  We do not need the
full strength of Price's law; a decay bound on the energy in all
forward cones of smaller aperture than a light cone should suffice.
The local energy decay bounds in the literature, however, do not seem
quite strong enough for our purposes.

In the ``normalized coordinates'' of Tataru~\cite{Tataru:2013} and
Metcalfe--Tataru--Tohaneanu~\cite{Metcalfe:2012}, one has that solutions of the
wave equation with smooth, compactly supported data satisfy the
following pointwise bounds:
\begin{equation*}
  \left| u \right| \lesssim \frac{1}{\langle t^* \rangle \langle
    t^*-r^* \rangle^{2}}, \quad \left| \pd[t]u \right| \lesssim
  \frac{1}{\langle t^* \rangle \langle t^*-r^* \rangle ^{3}}.
\end{equation*}
These coordinates correspond to modified Regge--Wheeler coordinates
and therefore in our setting, 
\begin{align*}
&t^*\sim t,\ r^*\sim r+2M\log(r-2M), &\quad\textrm{near null infinity},\\
&t^*\sim \tau,\ r^*\sim r, &\quad\textrm{near the event horizon}.
\end{align*}
In particular, in our coordinates $u$ is bounded\footnote{One should
  think of the $\langle t \rangle^{-1}$ as governing the decay we
  factor out when defining the radiation field.} by $\langle t\rangle
^{-1} \langle \bar{\tau}\rangle^{-2}$ near null infinity and by
$\langle \tau\rangle ^{-1} \langle t + 2M \log (r-2M) \rangle ^{-2}$
near the event horizon.  

We now use the pointwise bounds to bound the integral $\text{II}$
above.  Let us implicitly insert cutoff functions and treat it as two
integrals - one for large $r$ and another for small $r$.  For large
$r$, inserting the pointwise bounds yields an integral bounded by
\begin{equation*}
  \int _{r_{0}}^{R^{-1}(t-\lambda)} \int_{\sphere^{2}}\frac{1}{\langle
    t\rangle^{2}\langle \bar{\tau}\rangle^{4}} r^{2}\dr \domega \lesssim
  \frac{1}{\langle t \rangle ^{2} \lambda} + \frac{2}{\langle t\rangle
    \lambda ^{2}} + \frac{1}{\lambda^{3}}.
\end{equation*}
For $r$ near $2M$, the pointwise bounds yield an integral bounded by
\begin{align*}
  &\int _{ R^{-1}(\lambda-t)}^{r_0}\int
  _{\sphere^{2}}\frac{1}{\langle \tau\rangle^{2}
    \langle t + 2M \log (r-2M)\rangle ^{4}}
  r^{2} \dr \domega \\
  &\quad\quad  \lesssim \frac{C_{r_{0}}}{\langle t + (\lambda - t - 2M)\rangle^{6}}
  \lesssim \lambda ^{-6}.
\end{align*}
Here $R^{-1}$ is the inverse function of $R:(2M,\infty)\longrightarrow (-\infty,\infty)$ defined by 
$R(r)=r+2M\log(r-2M)$. 
By sending $t \to \infty$ and then sending $\lambda \to
\infty$, we see that the additional contribution of term $\text{II}$
tends to $0$.  This yields that for smooth, compactly supported data,
we have
\begin{equation*}
  E(0) = 4M^{2}\norm[L^{2}(\reals \times \sphere^{2})]{f_{\horiz}}^{2} +
  \norm[L^{2}(\reals \times \sphere^{2})]{f_{\scri}}^{2},
\end{equation*}
i.e., the radiation field is unitary on this subpace.  A standard
density argument then shows that the radiation field is a unitary
operator on the space of initial data with finite energy norm.  This
finishes the proof of Theorem~\ref{thm:unitary1}.  

\section{Support theorems for the radiation field}
\label{sec:supp-theor-radi}

In this section we prove two support theorems for smooth compactly
supported initial data using methods of S{\'a}
Barreto~\cite{Sa-Barreto:2003} and the first author and S{\'a}
Barreto~\cite{BaskinBarreto2012}.  We first show that for such data,
the support of the radiation field on the event horizons provides a
restriction on the support of the data.  We then show a similar
theorem for null infinity.  In particular, for initial data
$(0,\psi)$, $\psi\in C^{\infty}_{c}\left( (2M, \infty) \times
  \sphere^{2}\right)$, we show that if the radiation field vanishes on
$E_{1}^{+}$, then $\psi \equiv 0$.  In other words, it is impossible
to find initial data in this class so that the radiation field is
supported only at null infinity.  We also show the corresponding
statements for null infinity ($S_{1}^{+}$).

First, let us define the two components of 
the backward radiation field as follows:
\begin{align}
  \label{eq:backward-rad-field}
 &\mathcal{R}_{\scrim}(\phi,\psi)(\tau,\omega)=\lim_{r\rightarrow 2M} \pd[t]u(\tau+r+2M\log(r-2M),r,\omega),\\
 &\mathcal{R}_{\horizm}(\phi,\psi)(\bar{\tau},\omega)=\lim_{r\rightarrow
   \infty} r\pd[t]u(\bar{\tau}-r-2M\log(r-2M),r,\omega). \notag
\end{align}

\subsection{At the event horizon}
\label{sec:at-event-horizon}

The main result of this section is the following theorem:
\begin{theorem}
  \label{thm:support-horiz}
  Suppose $\phi, \psi \in C^{\infty}_{c}\left( (2M, \infty) \times
    \sphere^{2}\right)$.  If $\mathcal{R}_{\horiz}(\phi, \psi)$
  vanishes for $\tau \leq \tau_{0}$ and
  $\mathcal{R}_{\horizm}(\phi, \psi)$ vanishes for $\tau \geq
  -\tau_{0}$, then both $\phi$ and $\psi$ are supported in $[r_{0},
  \infty)\times \sphere^{2}$, where $r_{0}$ is given implicitly by
  \begin{equation*}
    r_{0} + 2M\log (r_{0}-2M) = \tau_{0}.
  \end{equation*}
\end{theorem}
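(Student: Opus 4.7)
The plan is to combine the regularity results of Section~\ref{sec:radiation-field} with a Goursat-type uniqueness argument carried out in Kruskal-like coordinates, in which both event horizons and the bifurcation sphere together form a single smooth Lorentzian manifold.

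First, I would translate the hypotheses on the radiation fields into vanishing statements for $u$ itself on the event horizons. On $\horiz$, the Killing field $\pd[t]$ lifts to $\pd[\tau]$ in the coordinates $(\tau,\rho,\omega)$ of Section~\ref{sec:event-horizon}, so the hypothesis $\mathcal{R}_{\horiz}(\phi,\psi)=0$ for $\tau\leq \tau_{0}$ says exactly that $u$ is constant in $\tau$ along $\horiz \cap \{\tau\leq \tau_{0}\}$. Finite speed of propagation applied to the compactly supported data gives $u\equiv 0$ on $\horiz$ for $\tau$ sufficiently negative, and combining these yields $u\equiv 0$ on $\horiz\cap \{\tau\leq \tau_{0}\}$. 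The same argument applied to the time-reversed evolution gives $u\equiv 0$ on $\horizm\cap \{\bar\tau \geq -\tau_{0}\}$.

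Second, I would pass to Kruskal-type coordinates $(U,V,\omega)$ defined on the exterior by $U=-e^{-\bar\tau/(4M)}$ and $V=e^{\tau/(4M)}$, in which $\horiz = \{U=0\}$ and $\horizm=\{V=0\}$ are transversely intersecting characteristic hypersurfaces meeting at the bifurcation sphere $\{U=V=0\}$, and in which $g_{S}$ extends smoothly to a Lorentzian metric on a full neighborhood of $\{U=V=0\}$. Proposition~\ref{prop:e1}, applied forward and backward in time, ensures that $u$ extends to a $C^{2,\alpha}$ solution of $\Box u =0$ on this extended spacetime near the bifurcation sphere. The two vanishing sets from the first step become the characteristic segments $\{U=0,\ 0\leq V\leq V_{0}\}$ and $\{V=0,\ -V_{0}\leq U\leq 0\}$, with $V_{0}=e^{\tau_{0}/(4M)}$.

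Third, I would invoke the standard Goursat uniqueness theorem for the wave equation: a $C^{2,\alpha}$ solution of $\Box u = 0$ with vanishing characteristic data on two transversely intersecting characteristic hypersurfaces must vanish in the characteristic diamond they bound. Here this forces $u\equiv 0$ on $\{-V_{0}\leq U \leq 0,\ 0\leq V\leq V_{0}\}$, hence on its intersection with $\{t=0\}$, which lies in Kruskal as $\{V=-U\}$ and corresponds in the original coordinates to $\{2M<r\leq r_{0}\}\times\sphere^{2}$. Therefore both $\phi$ and $\psi$ vanish on $\{r\leq r_{0}\}\times\sphere^{2}$, which is the claimed conclusion.

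The main obstacle is executing the Goursat uniqueness step with only the $C^{2,\alpha}$ regularity of $u$ across the event horizons; this is exactly the regularity provided by Proposition~\ref{prop:e1} and flagged as the minimal hypothesis in the remark following Theorem~\ref{thm:support-both}. An equivalent alternative would be to decompose $u$ into spherical harmonic modes, reducing to a family of $1+1$-dimensional wave equations with Regge--Wheeler-type potentials that decay exponentially as $r_{*}\to -\infty$, and to invoke the simple $1+1$-dimensional support theorem acknowledged in the introduction, whose proof is in turn a two-dimensional instance of the same Goursat argument.
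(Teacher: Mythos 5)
Your first two steps match the paper: the vanishing of $u$ itself on the two horizon segments, and the smooth extension across the bifurcate horizon in Kruskal-type coordinates, are exactly how the paper's proof begins. The gap is in your third step. There is no ``standard Goursat uniqueness theorem'' that yields vanishing on the full characteristic rectangle $\{-V_{0}\leq U\leq 0,\ 0\leq V\leq V_{0}\}$ from data on the two horizon segments in $3+1$ dimensions. Characteristic (Goursat) uniqueness is a domain-of-dependence statement, and here the open rectangle is disjoint from the domain of dependence of $\horiz\cup\horizm$: through every interior point the orbit of the static Killing field $\pd[t]$ is an inextendible causal curve running from past to future timelike infinity without ever meeting either horizon, and likewise the outgoing radial null geodesics escape to $S_{1}^{+}$ without registering on the data surfaces. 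Because one of your data surfaces lies to the future of the region and the other to its past, what you are actually asserting is a two-sided unique continuation statement from a bifurcate horizon; such statements are genuinely delicate (they are the subject of Carleman-estimate arguments in the literature, and of the Radon-transform counterexamples the paper's remark cites), and they do not follow from the energy or Riemann-function arguments that prove the usual Goursat theorem.

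The route you offer as an ``equivalent alternative'' is in fact the paper's proof, and the two routes are not equivalent: the spherical harmonic decomposition is precisely what repairs the argument. For the $1+1$-dimensional operator satisfied by each mode $u_{j}$, the only characteristics are $\{\mu=\mathrm{const}\}$ and $\{\nu=\mathrm{const}\}$, so every characteristic through an interior point of the rectangle meets one of the two data segments and the rectangle does coincide with the relevant uniqueness region, irrespective of time orientation. Concretely, the paper first upgrades the vanishing of $u$ on the horizon segments to vanishing to \emph{infinite order} there, by repeatedly differentiating the equation in the transverse null direction and integrating along the horizon; it then decomposes into spherical harmonics, extends each $u_{j}$ by zero across both horizons, and applies ordinary Cauchy uniqueness (finite speed of propagation in the ``time'' $\mu+\nu$) from the segment $\{\mu+\nu=0\}$. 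If you adopt this route you must either carry out the infinite-order vanishing step, which your proposal omits, or replace it by a direct $1+1$-dimensional Goursat estimate on the characteristic rectangle for each mode.
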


\begin{corollary}
  \label{cor:odd-data-horizon}
  Suppose $\psi \in C^{\infty}_{c}\left(
    (2M,\infty)\times\sphere^{2}\right)$.  If
  $\mathcal{R}_{\horiz}(0,\psi)$ vanishes for $\tau \leq \tau_{0}$,
  then $\psi$ is supported in $[r_{0}, \infty)\times\sphere^{2}$.  In
  particular, if $\mathcal{R}_{\horiz}(0,\psi)$ vanishes
  identically, then $\psi \equiv 0$.  
\end{corollary}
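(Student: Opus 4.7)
The plan is to reduce this corollary to Theorem~\ref{thm:support-horiz} by exploiting the time-reversal symmetry of the Schwarzschild exterior when the initial position vanishes.

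First, I would observe that because the metric $g_{S}$ is invariant under $t \mapsto -t$, if $u$ solves $\Box_{S} u = 0$ with Cauchy data $(0,\psi)$, then $v(t,r,\omega) := -u(-t,r,\omega)$ solves the same equation with the same Cauchy data (both $v$ and $\pd[t]v$ are easily checked at $t=0$). Uniqueness of the Cauchy problem then forces
\begin{equation*}
  u(t,r,\omega) = -u(-t,r,\omega),
\end{equation*}
so $\pd[t]u$ is an even function of $t$.

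Second, I would feed this evenness into the definitions of the forward and backward radiation fields. From the defining limits
\begin{align*}
  \mathcal{R}_{\horiz}(0,\psi)(\tau,\omega)  &= \lim_{r\to 2M}\pd[t]u(\tau - r - 2M\log(r-2M),r,\omega), \\
  \mathcal{R}_{\horizm}(0,\psi)(\tau,\omega) &= \lim_{r\to 2M}\pd[t]u(\tau + r + 2M\log(r-2M),r,\omega),
\end{align*}
and the identity $\pd[t]u(t,\cdot) = \pd[t]u(-t,\cdot)$, I would deduce the clean reflection formula
\begin{equation*}
  \mathcal{R}_{\horizm}(0,\psi)(\tau,\omega) = \mathcal{R}_{\horiz}(0,\psi)(-\tau,\omega).
\end{equation*}
Thus the hypothesis that $\mathcal{R}_{\horiz}(0,\psi)$ vanishes for $\tau \leq \tau_{0}$ is equivalent to the hypothesis that $\mathcal{R}_{\horizm}(0,\psi)$ vanishes for $\tau \geq -\tau_{0}$, so both support hypotheses of Theorem~\ref{thm:support-horiz} are simultaneously in force. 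That theorem then yields $\supp\psi \subset [r_{0},\infty)\times \sphere^{2}$ with $r_{0}$ determined implicitly by $r_{0} + 2M\log(r_{0}-2M) = \tau_{0}$, which is the first assertion.

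The ``in particular'' clause drops out by sending $\tau_{0}\to\infty$: since the map $r_0 \mapsto r_0 + 2M\log(r_0-2M)$ is an increasing bijection $(2M,\infty)\to\reals$, we have $r_{0}\to\infty$, and because $\psi$ is compactly supported in $(2M,\infty)\times \sphere^{2}$ the containment $\supp\psi\subset[r_{0},\infty)\times\sphere^{2}$ for arbitrarily large $r_{0}$ forces $\psi\equiv 0$. The only non-routine step is the identification of $\mathcal{R}_{\horizm}(0,\psi)$ with the reflection of $\mathcal{R}_{\horiz}(0,\psi)$, and this in turn rests squarely on the staticity of the Schwarzschild exterior; the rest is bookkeeping that invokes the already-proven Theorem~\ref{thm:support-horiz}.
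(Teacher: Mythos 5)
Your proof is correct and follows essentially the same route as the paper: the paper's proof consists precisely of the reflection identity $\mathcal{R}_{\horiz}(0,\psi)(\tau,\omega)=\mathcal{R}_{\horizm}(0,\psi)(-\tau,\omega)$ obtained from time-reversibility, followed by an appeal to Theorem~\ref{thm:support-horiz}. You have merely supplied the (correct) details behind that identity — the oddness $u(t,\cdot)=-u(-t,\cdot)$ from uniqueness of the Cauchy problem and the resulting evenness of $\pd[t]u$ — together with the routine limiting argument for the ``in particular'' clause.
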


\begin{proof}[Proof of Corollary~\ref{cor:odd-data-horizon}]
  By the time-reversibility of the wave equation, we know that
  \[
  \mathcal{R}_{\horiz}(0,\psi)
  (\tau,\omega) = \mathcal{R}_{\horizm}(0,\psi)(-\tau,\omega),
  \]
  and so we may apply Theorem~\ref{thm:support-horiz}.
\end{proof}

We showed in Section~\ref{sec:event-horizon} that solutions with
compactly supported smooth initial data are smooth up to the event
horizons.  In fact, the argument above shows they are jointly smooth
at the event horizons.  Here $\mu$ and $\nu$ are Kruskal coordinates,
given in terms of $\tau_{+} = t + r + 2M\log(r-2M)$ and $\tau_{-} = t
- r - 2M \log (r-2M)$ as follows:
\begin{equation*}
  \mu = e^{\frac{\tau_{+}}{4M}}, \ \nu= e^{-\frac{\tau_{-}}{4M}}.
\end{equation*}
Note that near $r=2M$ (and therefore in the entire exterior domain),
$r$ is a smooth function of $\mu$ and $\nu$.  In these coordinates,
the Schwarzschild metric has the following form:
\begin{equation*}
  g_{S} =  \frac{16M^{2}e^{-r/2M}}{r} \dmu\dnu + r^{2}\domega^{2}.
\end{equation*}
Observe that $\mu$ is a defining function for the past event horizon
$E_{1}^{-}$ while $\nu$ is a defining function for the future event
horizon $E_{1}^{+}$.

We summarize the joint smoothness in the following lemma:
\begin{lemma}
  \label{lem:smooth-horiz}
  If $\phi, \psi \in C^{\infty}_{c}\left(
    (2M,\infty)\times\sphere^{2}\right)$ and $u$ solves the initial
  value problem~\eqref{eq:IVP} with data $(\phi, \psi)$, then $u$ is
  smooth as a function of $\mu$ and $\nu$.
\end{lemma}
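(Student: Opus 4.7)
The plan is to adapt the energy estimate from the proof of Proposition~\ref{prop:e1} to the Kruskal coordinate system $(\mu, \nu, \omega)$. The decisive feature of these coordinates is that the Schwarzschild metric and its D'Alembertian extend smoothly across both event horizons $\{\mu = 0\}$ and $\{\nu = 0\}$, and the vector fields $\partial_{\mu}$, $\partial_{\nu}$, and the rotations $Z_{ij}$ form a smooth frame throughout a neighborhood of both horizons. Moreover $\Box_{S}$ has principal part a nonvanishing smooth multiple of $\partial_{\mu}\partial_{\nu} + r^{-2}\lap_{\omega}$, so commutators of $\Box_{S}$ with elements of this frame produce smooth second-order operators in the same frame.

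Because $\phi$ and $\psi$ are supported in $\{r \geq 2M + \epsilon\}$ and $\mu\nu = (r-2M)e^{r/2M}$, the initial data vanishes on the subset $\{\mu\nu \leq \epsilon e^{1+\epsilon/(2M)}\}$ of the initial hypersurface $\{\mu = \nu\}$. Finite speed of propagation, applied to $\Box_{S}$ viewed as a smooth hyperbolic operator on the Kruskal extension, then forces $u$ to vanish in an open neighborhood of the bifurcation sphere $\{\mu = \nu = 0\}$ inside the closed exterior. This removes the only geometric subtlety of the Kruskal description and lets us treat $\{\mu = 0\}$ and $\{\nu = 0\}$ as disjoint smooth characteristic hypersurfaces throughout the estimate.

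For any fixed $\mu_{0}, \nu_{0} > 0$, I would work on the domain $\Omega_{t_{0}}$ bounded by $\{t=0\}$, $\{t=t_{0}\}$, $\{\mu = \mu_{0}\}$, and $\{\nu = \nu_{0}\}$, and take the time-like function $T = \mu - \nu$, which is smooth in $(\mu, \nu, \omega)$ and satisfies $\langle \grad T, \grad T\rangle_{g_{S}} < 0$. Define
\[
M^{N}(u, s) = \Bigl(\sum_{|I|\leq N}\int_{\{T=s\}\cap\Omega_{t_{0}}} e^{-cT}\bigl\langle \mathcal{F}_{g_{S}}(T, Z^{I}u),\grad T\bigr\rangle_{g_{S}}\, d\sigma_{T}\Bigr)^{1/2},
\]
where $Z \in \{\partial_{\mu}, \partial_{\nu}, Z_{ij}\}$ and the induced measure $d\sigma_{T}$ satisfies $d\sigma_{T}\wedge dT = dV_{g_{S}}$. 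Choosing $c$ large enough that $\operatorname{div}_{g_{S}}\bigl(e^{-cT}\mathcal{F}_{g_{S}}(T, Z^{I}u)\bigr) \leq 0$ on $\Omega_{t_{0}}$ for every $|I| \leq N$, Stokes's theorem bounds $M^{N}(u, s)$ by the initial data norm uniformly in $t_{0}$ and $s$, exactly as in the proof of Proposition~\ref{prop:e1}. Letting $t_{0} \to \infty$ and invoking Sobolev embedding in the smooth Kruskal structure then yields $u \in H^{N}_{\mathrm{loc}}(\mu, \nu, \omega)$ for every $N$, hence $u \in C^{\infty}(\mu, \nu, \omega)$.

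The main obstacle is bookkeeping rather than substance: one must check that iterated commutators $[\Box_{S}, Z^{I}]$ with $Z \in \{\partial_{\mu}, \partial_{\nu}, Z_{ij}\}$ can be re-expressed as polynomials in these same vector fields with smooth coefficients, so that the induction on $|I|$ closes as in Proposition~\ref{prop:e1}. This is immediate from the smoothness of $g_{S}$ in Kruskal coordinates together with the fact that $\partial_{\mu}, \partial_{\nu}, Z_{ij}$ span the tangent space at every point of the Kruskal patch. Apart from this verification, the argument is a direct adaptation of Proposition~\ref{prop:e1}, with the single defining function $\rho$ replaced symmetrically by the two defining functions $\mu$ and $\nu$.
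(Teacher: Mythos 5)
Your argument is correct, but it is not the route the paper takes. The paper offers no separate energy estimate for this lemma: it observes that Proposition~\ref{prop:e1} already gives smoothness in $(\tau,\rho,\omega)$ up to $\rho=0$ on every compact $\tau$-interval, that away from the bifurcation sphere the map $(\tau,\rho)\mapsto(\mu,\nu)$ is a diffeomorphism (since $\mu=e^{\tau/4M}$ and $\nu=\rho e^{(\rho+2M)/2M}e^{-\tau/4M}$ are smooth with smooth inverse for $\mu>0$), and that finite speed of propagation makes $u$ vanish identically on a neighborhood of $\{\mu=\nu=0\}$; joint smoothness in $(\mu,\nu)$ is then immediate, with the past horizon handled by time reversal. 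You instead rerun the Friedlander-type multiplier argument from scratch in the Kruskal chart, exploiting that $g_{S}$ and $\Box_{S}$ extend smoothly across both horizons there. Both approaches work: yours is more self-contained and treats the two horizons symmetrically in a single estimate, at the cost of redoing the commutator bookkeeping that Proposition~\ref{prop:e1} already encodes; the paper's is essentially free once Proposition~\ref{prop:e1} is in hand. One point in your setup deserves a sentence of care: in the domain bounded by $\{t=0\}$, $\{t=t_{0}\}$, $\{\mu=\mu_{0}\}$, $\{\nu=\nu_{0}\}$, the hypersurface $\{\nu=\nu_{0}\}$ is a \emph{past} boundary component (one exits through it by decreasing $t$), so its flux enters the Stokes identity with the sign of an incoming term rather than an expendable outgoing one. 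This is the exact analogue of the surface $\{\tau=-\tau_{0}\}$ in Proposition~\ref{prop:e1}, and is handled the same way: choose $\nu_{0}$ larger than the largest value of $\nu$ on the support of the data, so that finite speed of propagation forces $u\equiv 0$ on $\{\nu=\nu_{0}\}\cap\{t\geq 0\}$ and the flux vanishes (alternatively, note that this surface is a compact subset of the open exterior, where $u$ is smooth by standard hyperbolic theory, so the flux is finite). With that adjustment your estimate closes and the conclusion follows.
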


\begin{proof}[Proof of Theorem~\ref{thm:support-horiz}]
  Let $c_{0} = e^{\tau_{0}/4M}$.  We start by showing that $u$ must
  vanish to infinite order at $\nu = 0$ for $\mu \in [0, c_{0}]$ and
  at $\mu = 0$ for $\nu \in [0,c_{0}]$.  Indeed, consider the
  D'Alembertian $\Box_{S}$, which is given in these coordinates by
  \begin{equation*}
    \Box_{S} = \frac{e^{r}}{8M^{2}r}\pd[\mu]\left( r^{2}
      \pd[\nu]\right) + \frac{e^{r}}{8M^{2}r}\pd[\nu]\left( r^{2}
      \pd[\mu]\right) + \frac{1}{r^{2}}\lap_{\omega},
  \end{equation*}
  where $r$ is regarded as a smooth function of $\mu$ and $\nu$.
  Observe that near $\mu=0$ or $\nu = 0$, $r-2M \sim \mu \nu$.

  Because the initial data is compactly supported and smooth, there is
  some constant $a_{0}$ so that $u(\mu, \nu , \omega)$ vanishes for
  $(\mu, \nu) \in [0,a_{0}]^{2}$.  Moreover, the assumption on the
  radiation field implies that $\pd[\mu]u$ vanishes when $\nu=0$ and
  $\mu \in [0,c_{0}]$, while $\pd[\nu]u$ vanishes for $\mu=0$ and $\nu
  \in [0,c_{0}]$.  Moreover, because the initial data is smooth and
  compactly supported, one may write (with a similar expression for
  $u(\mu, 0,\omega)$
  \begin{equation*}
    u (0,\nu,\omega) = \int _{0}^{\nu}\pd[\nu]u(0,s,\omega)\ds,
  \end{equation*}
  to conclude that in fact $u$ vanishes for $\{\mu = 0, \nu \in
  [0,c_{0}]\}$ and $\{\nu = 0, \mu \in [0,c_{0}]\}$.  

  We now work exclusively at $\nu=0$ (as the $\mu=0$ case is handled
  in the same way).  Because $u$ satisfies $\Box_{S} u = 0$, we have
  that for $\nu = 0$ and $\mu\in [0,c_{0}]$,
  \begin{equation*}
    \frac{2e^{r}}{8M^{2}r}\pd[\nu]\pd[\mu] u +
    \frac{e^{r}}{8M^{2}r}\pd[\mu](r^{2})\pd[\nu]u +
    \frac{1}{r^{2}}\lap_{\omega}u  = 0.
  \end{equation*}
  Because $u(\mu,0,\omega) = 0$ for $\mu\in [0,c_{0}]$,
  $\frac{1}{r^{2}}\lap_{\omega}u = 0$ here as well.  Moreover, we note
  that $\pd[\mu]r^{2} \sim \nu$, so the second term vanishes at
  $\nu=0$ as well.  (Recall that we already know $u$ is smooth as a
  function of $(\mu,\nu,\omega)$.)  In particular, $\pd[\nu]\pd[\mu]u
  (\mu, 0, \omega) = 0$.  By integrating in $\mu$, we may also
  conclude that $\pd[\nu]u(\mu,0,\omega) = 0$ for $\mu \in
  [0,c_{0}]$.  

  Differentiating the equation $\frac{8M^{2}r}{e^{r}}\Box_{S}u=0$ in $\nu$ yields
  \begin{equation*}
    2\pd[\nu]^{2}\pd[\mu]u + \pd[\mu](r^{2}) \pd[\nu]^{2}u +
    \pd[\nu]\pd[\mu](r^{2})\pd[\nu]u +
    \frac{8M^{2}}{r^{2}e^{r}}\lap_{\omega}\pd[\nu]u + \pd[\nu]\left(
      \frac{8M^{2}}{re^{r}}\right)\lap_{\omega}u=0.  
  \end{equation*}
  Using that $\pd[\nu]u$, and $u$ vanish at $(\mu,0,\omega)$ and that
  $\pd[\mu](r^{2})$ vanishes at $\nu = 0$, we conclude that
  $\pd[\nu]^{2}\pd[\mu]u(\mu,0,\omega) = 0$.  Integrating again, we
  also conclude that $\pd[\nu]^{2}u(\mu,0,\omega)=0$.  

  Continuing inductively, after differentiating the equation $k$
  times, we may conclude that $\pd[\nu]^{k+1}u$ vanishes at
  $(\mu,0,\omega)$ for $\mu \in [0,c_{0}]$.  In particular, $u$
  vanishes to infinite order at $\nu=0$ for $\mu \in [0,c_{0}]$.  

  We now decompose $u$ into spherical harmonics.  We set $w_{j}$ to be
  an eigenbasis of $L^{2}(\sphere^{2})$ with eigenvalues
  $-\lambda_{j}^{2}$ and write $u(t,r,\omega) =
  \sum_{j=0}^{\infty}u_{j}(t,r) w_{j}(\omega)$.  Note that this
  decomposition extends to the radiation field as well.  The
  smoothness of the initial data (and thus the solution) implies that
  the terms in the series are rapidly decreasing in $j$ and so the
  series converges uniformly.  In particular, each $u_{j}$ vanishes to
  infinite order at $\nu=0$ for $\mu\in [0,c_{0}]$ (with a
  corresponding statement at the other horizon).  In terms of the
  coordinates $\mu$ and $\nu$, each $u_{j}$ solves the equation
  \begin{equation}
    \label{eq:eq-for-uj-horiz}
    \pd[\nu]\left( r^{2}\pd[\mu]u_{j}\right) + \pd[\mu]\left( r^{2}
      \pd[\nu]u_{j}\right) - \frac{8M^{2}\lambda_{j}^{2}}{re^{r}}u_{j} =0.
  \end{equation}
  In particular, we may extend $u_{j}$ by $0$ to $\nu < 0$, $\mu\in
  [0,c_{0}]$ and $\mu < 0$, $\nu \in [0,c_{0}]$.  In other words, we
  may extend $u_{j}$ to a smooth function for $\mu,\nu\in (-\infty, c_{0})$ that
  vanishes to infinite order at $\mu + \nu = 0$ for $\mu \in
  [-c_{0},c_{0}]$.  
  
  We now use a unique continuation argument for the $1+1$-dimensional
  wave equation to conclude that each $u_{j}$ must vanish in the
  region we claim.  Indeed, because the extension of $u_{j}$ vanishes
  to infinite order at $\mu + \nu = 0$ for $\mu \in [-c_{0},c_{0}]$,
  finite speed of propagation (with respect to $\mu + \nu$) implies
  that $u$ vanishes identically for $\mu$ and $\nu$ in the domain of
  dependence of $\{ \mu + \nu = 0, \mu \in [-c_{0},c_{0}]\}$, i.e., in
  the region $\{ \mu \leq c_{0}, \nu \leq c_{0}\}$.  In particular,
  the solution vanishes identically for $\{ \mu = \nu, \mu, \nu \leq
  c_{0}\}$.  Because the hypersurface $\{\mu = \nu\}$ agrees with the
  hypersurface $\{ t=0\}$ away from $\mu = \nu = 0$, we may conclude
  that the spherical components of the initial data are supported in
  $\mu \geq c_{0}$, i.e., in $r \geq r_{0}$.
\end{proof}

\subsection{At null infinity}
\label{sec:at-null-infinity}

The main result of this section is the corresponding theorem at null infinity:
\begin{theorem}
  \label{thm:support-scri}
  Suppose $\phi, \psi \in C^{\infty}_{c}\left( (2M, \infty) \times
    \sphere^{2}\right)$.  If $\mathcal{R}_{\scri}(\phi,\psi)$
  vanishes for $\bar{\tau} \leq -\bar{\tau}_{0}$ and
  $\mathcal{R}_{\scrim}(\phi,\psi)$ vanishes for $\bar{\tau} \geq
  \bar{\tau}_{0}$, then both $\phi$ and $\psi$ are supported in $(2M,
  r_{0}] \times \sphere^{2}$, where $r_{0}$ is given implicitly by
  \begin{equation*}
    r_{0} + 2M\log (r_{0}-2M) = \bar{\tau}_{0}.
  \end{equation*}
\end{theorem}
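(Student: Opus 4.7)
The plan is to mirror the proof of Theorem~\ref{thm:support-horiz} at the null infinity end, with the rescaled solution $\tilde u = r u$ playing the role of $u$ and the conformal wave equation $(\tilde\Box_{\infty} + \gamma_{\infty})\tilde u = 0$ of Section~\ref{sec:near-future-null} replacing $\Box_{S} u = 0$. The analog of Kruskal coordinates at null infinity is
\[
  \mu = -\frac{1}{t - r - 2M\log(r-2M)}, \qquad \nu = \frac{1}{t + r + 2M\log(r-2M)},
\]
so that $S_{1}^{+} = \{\nu = 0\}$ is parametrized by $\mu \geq 0$ and $S_{1}^{-} = \{\mu = 0\}$ is parametrized by $\nu \geq 0$, with the corner $\mu = \nu = 0$ corresponding to spatial infinity $S_{0}$. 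For compactly supported data, finite speed of propagation ensures that $\tilde u$ vanishes in a full neighborhood of $S_{0}$, so one may confine the argument to the region where these coordinates define a smooth structure. Setting $c_{0} = 1/\bar{\tau}_{0}$, the two hypotheses read: $\pd[t]\tilde u$ vanishes on $\{\nu = 0,\,\mu\in[0,c_{0}]\}$ and on $\{\mu = 0,\,\nu\in[0,c_{0}]\}$.

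The next step is to upgrade these conditions to infinite-order vanishing of $\tilde u$ on the two segments. Integrating along $S_{1}^{+}$, using the vanishing of $\tilde u$ at $\mu = 0$ supplied by finite speed, gives $\tilde u \equiv 0$ on $\{\nu = 0,\,\mu\in[0,c_{0}]\}$, and symmetrically on the other segment. Restricting $(\tilde\Box_{\infty} + \gamma_{\infty})\tilde u = 0$ to $\{\bar{\rho} = 0\}$ in the $(\bar{\tau},\bar{\rho},\omega)$-coordinates of Section~\ref{sec:near-future-null}, every term except $2\pd[\bar{\tau}]\pd[\bar{\rho}]\tilde u$ and $\lap_{\omega}\tilde u$ either carries a factor of $\bar{\rho}$ or comes from $\gamma_{\infty} = -2M\bar{\rho}$, and therefore vanishes. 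Since $\lap_{\omega}\tilde u$ is purely tangential and $\tilde u \equiv 0$ on the segment, we obtain $\pd[\bar{\tau}]\pd[\bar{\rho}]\tilde u\,|_{\bar{\rho}=0} = 0$ there; integration in $\bar{\tau}$ then yields $\pd[\bar{\rho}]\tilde u\,|_{\bar{\rho}=0} = 0$ on the segment. Differentiating the equation $k$ times in $\bar{\rho}$ and iterating, using the commutator $[\tilde\Box_{\infty},\pd[\bar{\rho}]]$ computed in Section~\ref{sec:near-future-null} (whose terms are either tangential to $\{\bar{\rho}=0\}$ or carry a factor of $\bar{\rho}$), gives $\pd[\bar{\rho}]^{k}\tilde u\,|_{\bar{\rho}=0} = 0$ for all $k$ on the segment. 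The same argument applied at $S_{1}^{-}$ provides analogous infinite-order vanishing.

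With the infinite-order vanishing established, extend $\tilde u$ by zero across both segments of null infinity, decompose into spherical harmonics $\tilde u = \sum_{j}\tilde u_{j}(\mu,\nu)w_{j}(\omega)$, and observe that each $\tilde u_{j}$ satisfies a $1+1$-dimensional strictly hyperbolic equation with smooth coefficients on a neighborhood of the square $\{\mu,\nu\leq c_{0}\}$. Since each $\tilde u_{j}$ vanishes on $\{\nu=0\}\cup\{\mu=0\}$ within this square, the characteristic initial value problem for the $1+1$-dimensional equation forces $\tilde u_{j}\equiv 0$ on $\{0\leq\mu\leq c_{0},\ 0\leq\nu\leq c_{0}\}$. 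In this chart $\{t=0\}$ coincides with $\{\mu=\nu\}$, and that line meets the square precisely where $r + 2M\log(r-2M) \geq \bar{\tau}_{0}$, i.e., where $r \geq r_{0}$; summing over $j$ therefore shows that $\phi$ and $\psi$ vanish for $r \geq r_{0}$, as claimed.

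The principal obstacle is securing joint smoothness of $\tilde u$ in the Kruskal-type coordinates $(\mu,\nu,\omega)$. At the event horizon, Lemma~\ref{lem:smooth-horiz} supplies this smoothness via the smooth bifurcation sphere; no such analog exists here because $S_{0}$ is non-smooth in the conformal compactification, owing to the logarithmic defect discussed in Section~\ref{sec:part-comp}. Compactness of the initial data support rescues the argument: finite speed of propagation confines the solution to a region bounded away from $S_{0}$, where the conformal metric $\tilde g_{\infty}$ provides a smooth structure, and the regularity across each of $S_{1}^{\pm}$ needed to run the iterative propagation of vanishing is furnished by Proposition~\ref{prop:i1} together with its time-reversed analogue.
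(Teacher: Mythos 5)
Your reduction of the hypotheses to infinite-order vanishing of $v=ru$ at $\bar{\rho}=0$ for $\bar{\tau}_{+}\leq-\bar{\tau}_{0}$ (and at past null infinity for $\bar{\tau}_{-}\geq\bar{\tau}_{0}$) is exactly the paper's first step and is fine. The gap is in the second half, where you pass to the global double-null chart $\mu=-1/\bar{\tau}_{+}$, $\nu=1/\bar{\tau}_{-}$ and assert that each $\tilde{u}_{j}$ satisfies a strictly hyperbolic equation \emph{with smooth coefficients} on a neighborhood of the square, the needed regularity being ``furnished by Proposition~\ref{prop:i1}.'' This is precisely what fails, and the paper flags it explicitly in the footnote to the proof of Theorem~\ref{thm:support-scri}: the solution is \emph{not} smooth as a function of $\bar{\tau}_{+}^{-1}$ and $\bar{\tau}_{-}^{-1}$. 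The reason is the logarithm in $r^{*}=r+2M\log(r-2M)$: near an interior point of $\scri$ one has $\bar{\tau}_{-}=\bar{\tau}_{+}+2/\bar{\rho}-4M\log\bar{\rho}+O(\bar{\rho})$, hence $\bar{\rho}=2\nu+O(\nu^{2}\log\nu)$, so the smooth structures determined by $(\bar{\tau}_{+},\bar{\rho},\omega)$ (where Proposition~\ref{prop:i1} lives) and by $(\mu,\nu,\omega)$ differ at \emph{every} point of $\scri$, not only at the corner $S_{0}$. Your proposed rescue --- that compact support keeps the solution away from $S_{0}$ --- therefore addresses the wrong obstruction: the solution and the coefficients of the conformal wave operator, written in $(\mu,\nu)$, generically carry $\nu^{2}\log\nu$ terms at $\nu=0$ (and $\mu^{2}\log\mu$ terms at $\mu=0$), so neither the ``extension by zero to a smooth solution'' nor the appeal to a characteristic IVP for a smooth-coefficient equation is justified as written. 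This is exactly why the bifurcation-sphere argument of Theorem~\ref{thm:support-horiz} transfers cleanly to the event horizon (where Kruskal coordinates are genuinely smooth, Lemma~\ref{lem:smooth-horiz}) but not to null infinity.

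The paper avoids the global chart altogether: after establishing infinite-order vanishing it argues \emph{incrementally}, extending each $v_{j}$ by zero in the $(\bar{\tau}_{\pm},\bar{\rho})$ charts, gaining a small slab $\bar{\tau}_{+}\leq-\bar{\tau}_{1}+\delta$, $\bar{\rho}\leq\delta$ of vanishing beyond the current support radius $r_{1}$, and then using the hyperbolicity of the reduced $1+1$-dimensional operator with respect to $\partial_{r}$ to push the support of the data from $r\geq r_{1}$ down to $r\geq\tilde{r}_{1}<r_{1}$, iterating until $r_{0}$ is reached. If you want to keep your more direct route, you would need to replace ``smooth coefficients'' by an honest verification: for $v_{j}=ru_{j}$ the Regge--Wheeler reduction gives $4\partial_{+}\partial_{-}v_{j}=-V_{j}v_{j}$ with $V_{j}=(1-2M/r)(\lambda_{j}^{2}r^{-2}+2Mr^{-3})$, which in your chart becomes $\partial_{\mu}\partial_{\nu}v_{j}=c_{j}(\mu,\nu)\,v_{j}$ with $c_{j}=V_{j}/(4\mu^{2}\nu^{2})=O\left((\mu+\nu)^{-2}\right)$ merely bounded and continuous away from the corner; one must then check that uniqueness for the characteristic IVP holds under these weak hypotheses (a Gronwall argument on $v_{j}(\mu,\nu)=\int_{0}^{\mu}\int_{0}^{\nu}c_{j}v_{j}$ does this) and use finite speed of propagation separately to kill a neighborhood of the corner. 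None of this is in your write-up, so as it stands the argument has a genuine hole at its central step.
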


\begin{corollary}
  \label{cor:odd-data-scri}
  Suppose $\psi \in C^{\infty}_{c}\left( (2M, \infty)\times
    \sphere^{2}\right)$.  If $\mathcal{R}_{\scri}(0,\psi)$ vanishes
  for $\bar{\tau} \leq - \bar{\tau}_{0}$, then $\psi$ is supported in
  $(2M,r_{0}]\times \sphere^{2}$.  In particular, if
  $\mathcal{R}_{\scri}(0,\psi)$ vanishes identically, then $\psi
  \equiv 0$.  
\end{corollary}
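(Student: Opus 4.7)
The plan is to mirror the proof of Corollary~\ref{cor:odd-data-horizon} and reduce the statement to Theorem~\ref{thm:support-scri} by exploiting the time-reversal symmetry available when $\phi = 0$. Specifically, if $u$ solves the Cauchy problem~\eqref{eq:IVP} with data $(0,\psi)$, then $\tilde{u}(t,r,\omega) = -u(-t,r,\omega)$ solves the same Cauchy problem (the Schwarzschild metric is static, so $\Box_{S}$ commutes with the map $t\mapsto -t$, and the data for $\tilde{u}$ is $(-u(0), \pd[t]u(0)) = (0,\psi)$). Uniqueness for~\eqref{eq:IVP} therefore forces $u$ to be odd in $t$, hence $\pd[t]u$ is even in $t$.

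Next, I would feed this evenness into the definitions of the two forward components near null infinity. From
\begin{equation*}
  \mathcal{R}_{\scri}(0,\psi)(\bar{\tau},\omega) = \lim_{r\to\infty} r \pd[t]u(\bar{\tau} + r + 2M\log(r-2M), r, \omega)
\end{equation*}
and
\begin{equation*}
  \mathcal{R}_{\scrim}(0,\psi)(\bar{\tau},\omega) = \lim_{r\to\infty} r \pd[t]u(\bar{\tau} - r - 2M\log(r-2M), r, \omega),
\end{equation*}
the evenness of $\pd[t]u$ in $t$ yields the identity
\begin{equation*}
  \mathcal{R}_{\scri}(0,\psi)(\bar{\tau},\omega) = \mathcal{R}_{\scrim}(0,\psi)(-\bar{\tau},\omega).
\end{equation*}
Consequently, if $\mathcal{R}_{\scri}(0,\psi)$ vanishes for $\bar{\tau} \leq -\bar{\tau}_{0}$, then $\mathcal{R}_{\scrim}(0,\psi)$ vanishes for $\bar{\tau} \geq \bar{\tau}_{0}$, which is exactly the hypothesis of Theorem~\ref{thm:support-scri} applied to $(\phi,\psi) = (0,\psi)$. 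That theorem then gives the claimed support property for $\psi$, and setting $\bar{\tau}_{0}\to -\infty$ recovers the final assertion that $\mathcal{R}_{\scri}(0,\psi)\equiv 0$ forces $\psi\equiv 0$.

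The only subtle point worth double-checking is that the pointwise limits defining $\mathcal{R}_{\scri}$ and $\mathcal{R}_{\scrim}$ commute with the discrete symmetry $t\mapsto -t$; this is immediate from Proposition~\ref{prop:i4} (and Proposition~\ref{prop:i3}) applied to both $u$ and its time-reflection, which share the same smooth compactly supported initial data class. There is no substantive obstacle in this corollary itself: all of the genuine work sits in Theorem~\ref{thm:support-scri} and in the regularity theory of Section~\ref{sec:near-future-null} that makes the radiation-field limits well-defined and uniform enough to enjoy the symmetry. The corollary is essentially a one-line consequence once those tools are in hand.
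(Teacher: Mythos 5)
Your proposal is correct and follows the same route as the paper: the paper's proof is exactly the observation that time-reversibility (oddness of $u$ in $t$ for data $(0,\psi)$) gives $\mathcal{R}_{\scrim}(0,\psi)(\bar{\tau},\omega)=\mathcal{R}_{\scri}(0,\psi)(-\bar{\tau},\omega)$, after which Theorem~\ref{thm:support-scri} applies. Your write-up simply spells out the details that the paper leaves implicit.
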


\begin{proof}[Proof of Corollary~\ref{cor:odd-data-scri}]
  Again by the time-reversibility of the wave equation, one has that
  \[
  \mathcal{R}_{\scrim}(0,\psi)(\bar{\tau}, \omega) =
  \mathcal{R}_{\scri}(0,\psi)(-\bar{\tau},\omega),
  \]
  and we may apply Theorem~\ref{thm:support-scri}.
\end{proof}

\begin{proof}[Proof of Theorem~\ref{thm:support-scri}]
  We showed in Section~\ref{sec:near-future-null} that solutions with
  compactly supported smooth initial data have expansions at null
  infinity.  If the initial data are in $C_{c}^{\infty}\left( (2M,
    \infty)\times\sphere^{2}\right)$, then the rescaled solution
  $ru$ is smooth as a function of $\bar{\tau}$
  up to $\bar{\rho} = 0$. Here $\bar{\rho}=1/r$ and we use $\bar{\tau}_+,\bar{\tau}_-$ to 
  distinguish the coordinates at future and past null infinity if necessary,
  i.e., 
  $$\bar{\tau}_+=t-r-2M\log(r-2M), \quad\bar{\tau}_-=t+r+2M\log(r-2M).$$ 

  We start by showing that if the initial data $(\phi, \psi)$ are
  smooth and compactly supported, and $\mathcal{R}_{\scri}(\phi,
  \psi)$ vanishes for $\bar{\tau}_+ \leq -\bar{\tau}_{0}$, then $v = r u$
  vanishes to infinite order there. A similar argument applies near
  past null infinity as well.  First note that by integrating in
  $\bar{\tau}_+$, we see that $v$ vanishes at $\bar{\rho} = 0$ for $\bar{\tau}_+ \leq -
  \bar{\tau}_{0}$.  We now use that $v$ satisfies the following equation
  \begin{equation*}
    \tilde{\Box}_{\infty}v + \gamma_{\infty} v = 0,
  \end{equation*}
  where $\gamma_{\infty}$ is a smooth function across $\bar{\rho} = 0$ and
  $\tilde{\Box}_{\infty}$ is the D'Alembertian for the conformal metric
  $\bar{\rho}^{2}g_{S}$ and is given by
  \begin{equation*}
    \tilde{\Box}_{\infty} = 2\pd[\bar{\tau}_+]\pd[\bar{\rho}] + (1 -
    2M\bar{\rho})(\bar{\rho}\pd[\bar{\rho}])^{2} + (1-4M\bar{\rho})\bar{\rho}\pd[\bar{\rho}] + \lap_{\omega}.
  \end{equation*}
  Note that because $v$ vanishes at $\bar{\rho} = 0$ for $\bar{\tau}_+ \leq
  -\bar{\tau}_{0}$, $\lap_{\omega}v$ vanishes there as well.  Thus at $\bar{\rho}
  = 0$, because $v$ is smooth as a function of $\bar{\rho}$, we have that
  \begin{equation*}
    2\pd[\bar{\tau}_+]\pd[\bar{\rho}]v|_{\bar{\rho}=0} =0,
  \end{equation*}
  for $\bar{\tau}_+ \leq -
  \bar{\tau}_{0}$.  Integrating in $\bar{\tau}_+$ shows that $\pd[\bar{\rho}]v$ vanishes
  there as well.

  Differentiating the equation in $\bar{\rho}$ yields
  \begin{equation*}
    2\pd[\bar{\tau}_+]\pd[\bar{\rho}]^{2}v + O(\bar{\rho})v = 0,
  \end{equation*}
  where $O(\bar{\rho})$ is the product of $\bar{\rho}$ and smooth differential
  operator in $\bar{\rho}$ and $\omega$.  In particular, we have that at
  $\bar{\rho} = 0$, $\pd[\bar{\tau}_+]\pd[\bar{\rho}]^{2}v = 0$ for $\bar{\tau} \leq -
  \bar{\tau}_{0}$.  Integrating again implies that $\pd[\bar{\rho}]^{2}v=0$
  there.  

  Proceeding inductively, after differentiating the equation $k$
  times, we may conclude that $\pd[\bar{\rho}]^{k+1}v =
  \pd[\bar{\tau}_+]\pd[\bar{\rho}]^{k+1}v=0$ for $\bar{\rho} =0$ and $\bar{\tau}_+ \leq
  -\bar{\tau}_{0}$.  In particular, $v$ vanishes to infinite order at
  $\bar{\rho}=0$ for $\bar{\tau}_+\leq -\bar{\tau}_{0}$.  

  A similar argument shows that if $\mathcal{R}_{\scrim}(\phi,\psi)$
  vanishes for $\bar{\tau}_- \geq \bar{\tau}_{0}$, then $v$ vanishes to infinite
  order there as well.  

  We now decompose the solution $u$ (and thus also the rescaled
  solution $v$) into spherical harmonics.  Letting $w_{j}$ be an
  eigenbasis of $L^{2}(\sphere^{2})$ with eigenvalues
  $-\lambda_{j}^{2}$, we write $u(t,r,\omega) = \sum
  u_{j}(t,r)w_{j}(\omega)$ and $v(\bar{\tau}, \bar{\rho}, \omega) = \sum
  v_{j}(\bar{\tau}, \bar{\rho}) w_{j}(\omega)$.  Note that $v_{j}$ and $u_{j}$ are
  related by a change of coordinates and the rescaling, i.e., $v_{j} =
  rw_{j}$.  This decomposition extends to the radiation field
  as well, so each $v_{j}$ vanishes to infinite order at $\bar{\rho}=0$ for
  $\bar{\tau}_{+} \leq - \bar{\tau}_{0}$ and $\bar{\tau}_{-} \geq \bar{\tau}_{0}$.  In
  $(\bar{\tau}_{+},\bar{\rho})$ coordinates, each function $v_{j}$ satisfies
  \begin{equation*}
    2\pd[\bar{\tau}_{+}]\pd[\bar{\rho}]v_{j} + (1-2M\bar{\rho})(\bar{\rho}\pd[\bar{\rho}])^{2}v +
    (1-4M\bar{\rho})\bar{\rho}\pd[\bar{\rho}]v - \lambda_{j}^{2}v + \gamma_{\infty} v = 0.
  \end{equation*}
  Similarly, in $(\bar{\tau}_-, \bar{\rho})$ coordinates, each $v_{j}$ satisfies
  \begin{equation*}
    -2\pd[\bar{\tau}_-]\pd[\bar{\rho}]v + (1-2M\bar{\rho})(\bar{\rho}\pd[\bar{\rho}])^{2}v +
    (1-4M\bar{\rho})\bar{\rho}\pd[\bar{\rho}]v - \lambda_{j}^{2}v + \gamma_{\infty} v = 0.
  \end{equation*}
  We may thus extend $v_{j}$ by $0$ to a smooth solution in a
  neighborhood of $\bar{\rho} =0$ for $\bar{\tau}_{+} \leq -\bar{\tau}_{0}$.  We may
  perform a similar extension at past null infinity as well.

  We now use an argument similar to the one in the proof of
  Theorem~\ref{thm:support-horiz} to conclude that $v$ (and hence $u$)
  must vanish in the region we claim.\footnote{One key difference
    between the two arguments is that we must proceed incrementally
    near null infinity because $u$ is not smooth as a function of
    $\bar{\tau}_{+}^{-1}$ and $\bar{\tau}_{-}^{-1}$.}  Suppose instead
  that the corresponding initial data for some $u_{j}$ is supported in
  $r \leq r_{1}$, corresponding to $\bar{\tau}_{1}$.  Near $\bar{\rho}
  = 0$ and $\bar{\tau}_{+} = -\bar{\tau}_{1}$, $v_{j}$ vanishes to
  infinite order at $\bar{\tau}_+ + \bar{\rho} = -\bar{\tau}_{1}$ and
  thus vanishes in a full neighborhood of $\bar{\rho} = 0$,
  $\bar{\tau}_{+} = -\bar{\tau}_{1}$.  A similar argument holds for
  $\bar{\rho} = 0$ and $\bar{\tau}_{-} = \bar{\tau}_{1}$.  Let us say
  that $v_{j}$ vanishes $\bar{\tau}_{+} \leq -\bar{\tau}_{1} + \delta$
  when $\bar{\rho} \leq \delta$ as well as for $\bar{\tau}_{-} \geq
  \bar{\tau}_{1} - \delta$ when $\bar{\rho} \leq \delta$.  In terms of
  $t$ and $\bar{\rho}$, this implies that $u_{j}$ vanishes if
  \begin{align*}
    r &\geq \delta^{-1} \\ \tau_{1} - \delta - r - 2M \log(r-2M)
    &\leq t \leq \delta - \tau_{1} + r + 2M\log(r-2M).
  \end{align*}
  We now use the hyperbolicity of the $1+1$-dimensional operator
  \begin{equation*}
     - \left( \frac{r}{r-2M}\right)\pd[t]^{2} + \left(
       \frac{r-2M}{r}\right)\pd[r]^{2} + \frac{2(r-M)}{r^{2}}\pd[r] - \frac{\lambda_{j}^{2}}{r^{2}}
   \end{equation*}
   with respect to $\pd[r]$ to conclude that the initial data for
   $u_{j}$ vanishes if $r + 2M\log (r-2M) \geq \bar{\tau}_{1}-\delta$,
   i.e., for $r \geq \tilde{r}_{1}$ with $\tilde{r}_{1} < r_{1}$.
   Figure~\ref{fig:regions-null-infinity} illustrates the process of
   improving from vanishing for $r \geq r_{1}$ to $r \geq
   \tilde{r}_{1}$.

   \begin{figure}[h]
     \centering
     \begin{tikzpicture}
       \draw [->] (3,0) -- (0,3) node[anchor=south]
       (tauplusaxis){$\bar{\tau}_{+}$};
       \draw [->] (3,0) -- (0,-3) node[anchor = north]
       (tauminusaxis){$-\bar{\tau}_{-}$};

       \fill[gray!20!white] (3,0) -- (2,1) node[black, anchor =
       south west]{$-\bar{\tau}_{1}$} -- (1,0) -- (2,-1) node[anchor=north
       west, black]{$\bar{\tau}_{1}$} -- (3,0);
       
       \draw [ultra thick] (3,0) -- (0.5,2.5)
       node[anchor=south](plustau0){$-\bar{\tau}_{0}$};
       \draw [ultra thick] (3,0) -- (0.5,-2.5) node[anchor=north] (minustau0)
       {$\bar{\tau}_{0}$}; 

       \draw (3,0) -- (2,1) -- (1,0) node[anchor=south]{$r_{1}$} -- (2,-1) -- (3,0);
       
       \draw (-2,0) node[anchor=south]{$t=0$} -- (3,0);

       \draw (2,1) -- (1.5,1.5) node [anchor = south
       west]{$-\bar{\tau}_{1} + \delta$} -- (1, 1) --
       (1.5,0.5) -- (2,1);
       \draw (2,-1) -- (1.5, -1.5) node [anchor = north west]
       {$\bar{\tau}_{1} - \delta$} -- (1,-1) -- (1.5, -0.5) -- (2,-1);

       \draw [dashed] (1.5,0.5) -- (2,0) -- (1.5, -0.5);

       \draw [dashed] (1,1) -- (0,0) node[anchor = south
       east]{$\tilde{r}_{1}$} -- (1,-1);

       \draw [fill=white](3,0) circle (0.1);

     \end{tikzpicture}
     \caption{Iteratively improving the support of the initial data near
       null infinity.  Knowing that the solution vanishes identically in
       the shaded region, we find first that it vanishes in the small
       white region, then use the hyperbolicity of the one-dimensional
       operator to improve it to the region outlined by the dashed line.}
     \label{fig:regions-null-infinity}
   \end{figure}

  Proceeding in this manner, we find that both $\phi$ and
  $\psi$ must be supported in $r \leq r_{0}$.
\end{proof}

 \bibliographystyle{alpha}
 \bibliography{papers}

\begin{thebibliography}{MMTT10}

\bibitem[Bac94]{Bachelot:1994}
Alain Bachelot.
\newblock Asymptotic completeness for the {K}lein-{G}ordon equation on the
  {S}chwarzschild metric.
\newblock {\em Ann. Inst. H. Poincar{\'e} Phys. Th{\'e}or.}, 61(4):411--441,
  1994.

\bibitem[BS06]{Blue:2006}
Pieter Blue and Jacob Sterbenz.
\newblock Uniform decay of local energy and the semi-linear wave equation on
  {S}chwarzschild space.
\newblock {\em Comm. Math. Phys.}, 268(2):481--504, 2006.

\bibitem[BS09]{Blue:2009}
P.~Blue and A.~Soffer.
\newblock Phase space analysis on some black hole manifolds.
\newblock {\em J. Funct. Anal.}, 256(1):1--90, 2009.

\bibitem[BSB12]{BaskinBarreto2012}
Dean Baskin and Ant{\^o}nio S{\'a}~Barreto.
\newblock Radiation fields for semilinear wave equations.
\newblock To appear in {\it Trans. Amer. Math. Soc.} Preprint, ArXiv:1208.2743,
  2012.

\bibitem[BVW12]{BVW1}
Dean Baskin, Andr{\'a}s Vasy, and Jared Wunsch.
\newblock Asymptotics of radiation fields on asymptotically {M}inkowski spaces.
\newblock Submitted. Preprint, ArXiv:1212.5141, December 2012.

\bibitem[Dim85]{Dimock:1985}
J.~Dimock.
\newblock Scattering for the wave equation on the {S}chwarzschild metric.
\newblock {\em Gen. Relativity Gravitation}, 17(4):353--369, 1985.

\bibitem[DK13]{Donninger:2013}
Roland Donninger and Joachim Krieger.
\newblock A vector field method on the distorted fourier side and decay for
  wave equations with potentials.
\newblock Preprint, ArXiv:1307.2392, 2013.

\bibitem[DR09]{Dafermos:2009a}
Mihalis Dafermos and Igor Rodnianski.
\newblock The red-shift effect and radiation decay on black hole spacetimes.
\newblock {\em Comm. Pure Appl. Math.}, 62(7):859--919, 2009.

\bibitem[DR13]{Dafermos-Rodnianski:notes}
Mihalis Dafermos and Igor Rodnianski.
\newblock Lectures on black holes and linear waves.
\newblock In David Ellwood, Igor Rodnianski, Gigliola Staffilani, and Jared
  Wunsch, editors, {\em Evolution Equations}, volume~17 of {\em Clay
  Mathematics Proceedings}, pages 97--205. Amer. Math. Soc. and Clay Math.
  Inst., 2013.

\bibitem[DSS12]{Donninger:2012}
Roland Donninger, Wilhelm Schlag, and Avy Soffer.
\newblock On pointwise decay of linear waves on a {S}chwarzschild black hole
  background.
\newblock {\em Comm. Math. Phys.}, 309(1):51--86, 2012.

\bibitem[Fri80]{Friedlander:1980}
F.~G. Friedlander.
\newblock Radiation fields and hyperbolic scattering theory.
\newblock {\em Math. Proc. Cambridge Philos. Soc.}, 88(3):483--515, 1980.

\bibitem[HE73]{Hawking-Ellis}
S.~W. Hawking and G.~F.~R. Ellis.
\newblock {\em The large scale structure of space-time}.
\newblock Cambridge University Press, London, 1973.
\newblock Cambridge Monographs on Mathematical Physics, No. 1.

\bibitem[Hel99]{Helgason:1999}
Sigurdur Helgason.
\newblock {\em The {R}adon transform}, volume~5 of {\em Progress in
  Mathematics}.
\newblock Birkh{\"a}user Boston Inc., Boston, MA, second edition, 1999.

\bibitem[LP89]{Lax:1989}
Peter~D. Lax and Ralph~S. Phillips.
\newblock {\em Scattering theory}, volume~26 of {\em Pure and Applied
  Mathematics}.
\newblock Academic Press Inc., Boston, MA, second edition, 1989.
\newblock With appendices by Cathleen S. Morawetz and Georg Schmidt.

\bibitem[Luk10]{Luk:2010}
Jonathan Luk.
\newblock Improved decay for solutions to the linear wave equation on a
  {S}chwarzschild black hole.
\newblock {\em Ann. Henri Poincar{\'e}}, 11(5):805--880, 2010.

\bibitem[Mel93]{Melrose:1993}
Richard~B. Melrose.
\newblock {\em The {A}tiyah-{P}atodi-{S}inger index theorem}, volume~4 of {\em
  Research Notes in Mathematics}.
\newblock A K Peters Ltd., Wellesley, MA, 1993.

\bibitem[MMTT10]{Marzuola:2010}
Jeremy Marzuola, Jason Metcalfe, Daniel Tataru, and Mihai Tohaneanu.
\newblock Strichartz estimates on {S}chwarzschild black hole backgrounds.
\newblock {\em Comm. Math. Phys.}, 293(1):37--83, 2010.

\bibitem[MTT12]{Metcalfe:2012}
Jason Metcalfe, Daniel Tataru, and Mihai Tohaneanu.
\newblock Price's law on nonstationary space-times.
\newblock {\em Adv. Math.}, 230(3):995--1028, 2012.

\bibitem[SB03]{Sa-Barreto:2003}
Ant{\^o}nio S{\'a}~Barreto.
\newblock Radiation fields on asymptotically {E}uclidean manifolds.
\newblock {\em Comm. Partial Differential Equations}, 28(9-10):1661--1673,
  2003.

\bibitem[SB05]{Sa-Barreto:2005}
Ant{\^o}nio S{\'a}~Barreto.
\newblock Radiation fields, scattering, and inverse scattering on
  asymptotically hyperbolic manifolds.
\newblock {\em Duke Math. J.}, 129(3):407--480, 2005.

\bibitem[SB08]{Sa-Barreto:2008}
Ant{\^o}nio S{\'a}~Barreto.
\newblock A support theorem for the radiation fields on asymptotically
  {E}uclidean manifolds.
\newblock {\em Math. Res. Lett.}, 15(5):973--991, 2008.

\bibitem[SBW05]{Sa-Barreto:2005a}
Ant{\^o}nio S{\'a}~Barreto and Jared Wunsch.
\newblock The radiation field is a {F}ourier integral operator.
\newblock {\em Ann. Inst. Fourier (Grenoble)}, 55(1):213--227, 2005.

\bibitem[Tat13]{Tataru:2013}
Daniel Tataru.
\newblock Local decay of waves on asymptotically flat stationary space-times.
\newblock {\em Amer. J. Math.}, 135(2):361--401, 2013.

\bibitem[Wan13]{Wang:2011}
Fang Wang.
\newblock Radiation field for {E}instein vacuum equations with spacial
  dimension $n\geq 4$.
\newblock Preprint, ArXiv:1304.0407, April 2013.

\end{thebibliography}

\end{document}